\newcommand{\R}{\mathbf{R}}
\newcommand {\E}{\mathrm{E}}
\renewcommand{\d}{\text{\rm d}}
\newcommand{\sG}{\mathcal{G}}
\newcommand{\sA}{\mathcal{A}}
\newcommand{\sE}{\mathcal{E}}
\newtheorem{stat}{Statement}[section]
\newtheorem{proposition}[stat]{Proposition}
\newtheorem{theorem}[stat]{Theorem}
\newtheorem{lemma}[stat]{Lemma}
\newtheorem{assumption}[stat]{Assumption}
\theoremstyle{definition}
\newtheorem{definition}[stat]{Definition}\newtheorem{remark}[stat]{Remark}
\numberwithin{equation}{section}
\numberwithin{equation}{section}
\newcommand{\RR}[1]{\mathbb{#1}}
\newcommand{\rd}{{\mathbb R^d}}
\def\R{{\mathbb R}}
\def\a{\alpha}
\def\E{{\mathbb E}}
\begin{document}

\title{Asymptotic properties of some space-time fractional stochastic equations}

\author{Mohammud Foodun\\
Loughborough University}
\author{Mohammud Foondun\\
Loughborough University\\
 and \\
   Erkan Nane\\
 Auburn University}


\maketitle


\begin{abstract}
 Consider non-linear time-fractional  stochastic heat type equations of the following type,
$$\partial^\beta_tu_t(x)=-\nu(-\Delta)^{\alpha/2} u_t(x)+I^{1-\beta}_t[\lambda \sigma(u)\stackrel{\cdot}{F}(t,x)]$$ in $(d+1)$ dimensions, where $\nu>0, \beta\in (0,1)$, $\alpha\in (0,2]$. The operator  $\partial^\beta_t$ is the Caputo fractional derivative while $-(-\Delta)^{\alpha/2} $ is the generator of an isotropic stable process and $I^{1-\beta}_t$ is the fractional integral operator. The forcing noise denoted by $\stackrel{\cdot}{F}(t,x)$ is a  Gaussian noise. And the multiplicative non-linearity $\sigma:\RR{R}\to\RR{R}$ is assumed to be globally Lipschitz continuous.

Under suitable conditions on the initial function, we study the asymptotic behaviour of the solution with respect to time and the parameter $\lambda$. In particular, our results are significant extensions of those  in \cite{foodun-liu-omaba-2014}, \cite{nane-mijena-2014}, \cite{mijena-nane-2} and \cite{foondun-khoshnevisan-09}.  Along the way, we prove a number of interesting properties about the deterministic counterpart of the equation.
\end{abstract}

Keywords: Space-time-fractional stochastic partial differential equations; fractional Duhamel's principle; Caputo derivatives; noise excitability.

\section{Introduction and main results.}
\subsection{Background material.}
Recently, there has been an increased interest in fractional calculus. This is because, time fractional operators are proving to be very useful for modelling purposes. For example, while the classical heat equation $\partial_tu_t(x)=\Delta u_t(x)$, used for modelling heat diffusion in homogeneous media, the fractional heat equation $\partial^\beta_tu_t(x)=\Delta u_t(x)$ are used to describe heat propagation in inhomogeneous media. It is known that as opposed to the classical heat equation, this equation is known to exhibit sub diffusive behaviour and are related with anomalous diffusions or diffusions in non-homogeneous media, with random fractal structures; see, for instance, \cite{meerschaert-nane-xiao}. The main aim of this paper is study a class of stochastic fractional heat equations. In particular, it will become clear how this sub diffusive feature affects other properties of the solution.

Stochastic partial differential equations (SPDE) have been studied  in  mathematics, and  in many disciplines that include  statistical mechanics, theoretical physics,  theoretical neuroscience, theory of complex chemical reactions, fluid dynamics, hydrology,  and mathematical finance; see, for example, Khoshnevisan \cite{khoshnevisan-cbms} for an extensive list of references. The area of SPDEs is interesting to mathematicians as it contains a lot of hard open problems. So far most of the work done on the stochastic heat equations have dealt with the usual time derivative, that is $\beta=1$. Its only recently that Mijena and Nane has introduced time fractional SPDEs in \cite{nane-mijena-2014}. These types of time fractional stochastic heat type equations are  attractive models that can be used to model phenomenon with random effects with thermal memory. In another paper \cite{mijena-nane-2} they have proved  exponential growth of solutions of time fractional SPDEs--intermittency-- under the assumption that the initial function is bounded from below.
A related  class of  time-fractional  SPDE was studied by  Karczewska \cite{karczewska}, Chen  et al. \cite{chen-kim-kim-2014}, and Baeumer et al \cite{baeumer-Geissert-Kovacs}. They have proved regularity of the solutions to the time-fractional parabolic type SPDEs using cylindrical Brownain motion in Banach spaces  in the sense of  \cite{daPrato-Zabczyk}. For a comparison of the two approaches to SPDE's see the paper by Dalang and Quer-Sardanyons \cite{Dalang-Quer-Sardanyons}.

A possible {\bf Physical explanation} of  time fractional SPDEs  is given in \cite{chen-kim-kim-2014}.
The time-fractional SPDEs studied in this paper may arise naturally by considering the heat equation in a material with thermal memory.

Before we describe our equations with more care, we provide some heuristics. Consider the following fractional equation,
\begin{equation*}
\begin{aligned}
\partial^\beta_tu_t(x)&=-\nu(-\Delta)^{\alpha/2} u_t(x)\\
\end{aligned}
\end{equation*}
with $\beta\in (0,1)$  and $\partial^\beta_t$ is the Caputo fractional derivative which first appeared in \cite{Caputo} and is defined by
\begin{equation}\label{CaputoDef}
\partial^\beta_t u_t(x)=\frac{1}{\Gamma(1-\beta)}\int_0^t \partial_r
u_r(x)\frac{\d r}{(t-r)^\beta} .
\end{equation}
If $u_0(x)$ denotes the initial condition to the above equation, then the solution can be written as
\begin{equation*}
u_t(x)=\int_\rd G_{t}(x-y)u_0(y)\d y.
\end{equation*}
$G_t(x)$ is the time-fractional heat kernel, which we will analyse a bit more later.  Let us now look at
 \begin{equation}\label{tfpde}
 \partial^\beta_tu_t(x)=-\nu(-\Delta)^{\alpha/2}u_t(x)+f(t,x),
 \end{equation}
with the same initial condition $u_0(x)$ and $f(t,x)$ is some nice function. We will make use of {\bf time fractional Duhamel's principle} \cite{umarov-06, umarov-12,Umarov-saydamatov} to get the correct version of \eqref{tfspde}.
Using the fractional Duhamel principle, the solution to \eqref{tfpde} is given by
$$u_t(x)=\int_\rd G_{t}(x-y)u_0(y)\d y+\int_0^t\int_\rd G_{t-r}(x-y)\partial^{1-\beta}_r f(r,y)\d y\d r.$$

We will remove the fractional derivative appearing in the second term of the above display. Define the fractional integral by
$$I^{\gamma}_tf(t):=\frac{1}{\Gamma(\gamma)} \int _0^t(t-\tau)^{\gamma-1}f(\tau)\d\tau.$$
For every  $\beta\in (0,1)$, and   $g\in L^\infty(\R_+)$ or $g\in C(\R_+)$
$$ \partial _t^\beta I^\beta_t g(t)=g(t).$$
We consider the time fractional PDE with a force given by $f(t,x)=I^{1-\beta}_tg(t,x)$, then by the Duhamel's principle, the  mild solution to
\eqref{tfpde} will be given by
$$u_t(x)=\int_{\rd} G_t(x-y)u_0(y)\d y+\int_0^t\int_{\rd} G_{t-r}(x-y) g(r,y)\d y\d r.$$
The reader can consult \cite{chen-kim-kim-2014} for more information. The first equation we will study in this paper is the following.
\begin{equation}\label{tfspde}
\begin{split}
\partial^\beta_t u_t(x)&=-\nu(-\Delta)^{\alpha/2} u_t(x)+I^{1-\beta}_t[\lambda\sigma(u_t(x))\stackrel{\cdot}{W}(t,x)],\, x\in \R^d,
 \end{split}
 \end{equation}
where the initial datum $u_0$ is a non-random measurable function. $\stackrel{\cdot}{W}(t,x)$ is a  space-time white noise with $x\in \R^d$, $I^{1-\beta}_t$ and $\sigma:\R\to\R$ is a globally Lipschitz function. $\lambda$ is a positive parameter called the ``level of noise".  We will make sense of the above equation using an idea Walsh \cite{walsh}. In light of the above discussion, a solution $u_t$ to the above equation will in fact be a solution to the following integral equation.
\begin{equation}\label{mild-sol-white}
u_t(x)=(\mathcal{G}u_0)_t(x)+\lambda\int_0^t\int_{\R^d}G_{t-s}(x-y)\sigma(u_s(y))W(\d y\,\d s),
\end{equation}
where
$$
(\mathcal{G}u_0)_t(x):=\int_\rd G_t(x-y)u_0(y)\d y.
$$

We now fix the parameters $\alpha$ and $\beta$. We will restrict $\beta\in (0,\,1)$. The dimension $d$ is related with $\alpha$ and $\beta$ via
\begin{equation*}
d<(2\wedge \beta^{-1})\alpha.
\end{equation*}
Note that when $\beta=1$, the equation reduces to the well known stochastic heat equation and the above restrict the problem to a one-dimensional one. This is the so called curse of dimensionality explored in \cite{foondun-khoshnevisan-Nualart-11}. We will require the following notion of "random-field" solution. We will need $d< 2\alpha$ while computing the $L^2-$norm of the heat kernel, while $d<\beta^{-1}\alpha$ is needed for an integrability condition needed for ensuring existence and uniqueness of the solution.
\begin{definition}
A random field $\{u_t(x), \ t\geq 0, x\in \rd\}$ is called a mild solution of \eqref{tfspde} if
\begin{enumerate}
\item $u_t(x)$ is jointly measurable in $t\geq 0$ and $x\in \rd$;
\item $\forall (t,x)\in [0,\infty)\times \rd$, $\int_0^t\int_{\R^d}G_{t-s}(x-y)\sigma(u_s(y))W(\d y\,\d s)$ is well-defined in $L^2(\Omega)$; by the Walsh-Dalang isometry this is the same as requiring $$
    \sup_{x\in \rd}\sup_{t>0}\E|u_t(x)|^2<\infty!
    $$
\item The following holds in $L^2(\Omega)$,
$$
u_t(x)=(\mathcal{G}u_0)_t(x)+\lambda\int_0^t\int_{\R^d}G_{t-s}(x-y)\sigma(u_s(y))W(\d y\,\d s).
$$
\end{enumerate}
\end{definition}

Next, we  introduce the  second class equation with space colored noise.
\begin{equation}\label{tfspde-colored}
\partial^\beta_t u_t(x)=-\nu(-\Delta)^{\alpha/2} u_t(x)+I_t^{1-\beta}[\lambda \sigma(u_t(x))\dot{F}(t,\,x)],\, x\in \R^d.
\end{equation}

The only difference with \eqref{tfspde} is that the noise term is now colored in space. All the other conditions are the same. We now briefly describe the noise.

$\dot{F}$ denotes the Gaussian colored noise satisfying the following property,
$$
\E[\dot{F}(t,x)\dot{F}(s,y)]=\delta_0(t-s)f(x,y).
$$
This can be interpreted more formally as

\begin{equation}\label{covariance-colored}
Cov \bigg(\int \phi \d F, \int \psi \d F\bigg)=\int_0^\infty\int _{\rd}\d x\int_{\rd}\d y\phi_s(x)\psi_s(y)f(x-y),
\end{equation}
where we use the notation $\int \phi \d F$ to denote the wiener integral of $\phi$ with respect to $F$, and  the right-most integral converges absolutely.

We will assume that the spatial correlation of the noise term is given by the following function for  $\gamma<d$,
$$
f(x,y):=\frac{1}{|x-y|^\gamma}.
$$

Following Walsh \cite{walsh}, we define the mild solution of \eqref{tfspde-colored}  as the predictable solution to the following integral equation

\begin{equation}\label{mild-sol-colored}
\begin{split}
u_t(x)&=(\mathcal{G}u_0)_t(x)+\lambda \int_\rd \int_0^t G_{t-s}(x-y)\sigma(u_s(y))F(\d s \d y).
\end{split}
\end{equation}
As before, we will look at random field solution, which is defined by \ref{mild-sol-colored}. We will also assume the following
\begin{equation*}
\gamma <\alpha\wedge d.
\end{equation*}
That we should have $\gamma<d$ follows from an integrability condition about the correlation function. We need $\gamma<\alpha$ which comes from an integrability condition needed for the existence and uniqueness of the solution.

{We now briefly give an outline of the paper. We adapt the methods of  proofs of the results in  \cite{foodun-liu-omaba-2014} with  many crucial nontrivial changes.  We state main results in the next section. We give some preliminary results in section 3, we prove a number of interesting properties of the heat kernel of the time fractional heat type partial differential equations that are essential to the proof of our main results. The proofs of the results in the space-time white noise are given in Section 4. In Section 5, we prove the main  results about the space colored noise equation, and the  continuity of the solution to the time fractional SPDEs with space colored noise.
Throughout the paper, we use the letter $C$ or $c$ with or without subscripts to denote
a constant whose value is not important and may vary from places to places.  If $x\in \R^d$, then $|x|$ will denote the euclidean norm of $x\in$, while when $A\subset \R^d$, $|A|$ will denote the Lebesgue measure of $A$.}

\subsection{Statement of main results.}
Before stating our main results precisely, we describe some of the conditions we need. The first condition is required for the existence-uniquess result as well as the upper bound on the second moment of the solution.

\begin{assumption}\label{existence-uniqueness}
\begin{itemize}
\item We assume that initial condition is a non-random bounded non-negative function $u_0:\R^d\rightarrow \R$.
\item We assume that $\sigma:\R\rightarrow \R$ is a globally Lipschitz function satisfying $\sigma(x)\leq L_\sigma|x|$ with $L_\sigma$ being a positive number.
\end{itemize}
\end{assumption}

The following condition is needed for lower bound on the second moment.

\begin{assumption}\label{lowerbound}
\begin{itemize}
\item We will assume that the initial function $u_0$ is non-negative on a set of positive measure.
\item The function $\sigma$ satisfies $\sigma(x)\geq l_\sigma|x|$ with $l_\sigma$ being a positive number.
\end{itemize}
\end{assumption}

Mijena and Nane  \cite[Theorem 2]{nane-mijena-2014} have essentially proved the next theorem. We give a new proof of  this theorem in this paper.

\begin{theorem}\label{white:upperbound}
Suppose that $d<(2\wedge \beta^{-1})\alpha$. Then under Assumption \ref{existence-uniqueness}, there exists a unique random-field solution to \eqref{tfspde} satisfying
\begin{equation*}
\sup_{x\in \R^d}\E|u_t(x)|^2\leq c_1e^{c_2\lambda^{\frac{2\alpha}{\alpha-d\beta}}t}\quad \text{for\,all}\quad t>0.
\end{equation*}
Here $c_1$ and $c_2$ are positive constants.
\end{theorem}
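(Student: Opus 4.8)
The plan is to construct the solution by Picard iteration and to read off both existence-uniqueness and the precise $\lambda$-dependence from a weakly singular Volterra inequality. Set $u^{(0)}_t(x)=(\mathcal{G}u_0)_t(x)$ and define inductively
$$u^{(n+1)}_t(x)=(\mathcal{G}u_0)_t(x)+\lambda\int_0^t\int_{\rd}G_{t-s}(x-y)\sigma(u^{(n)}_s(y))\,W(\d y\,\d s).$$
Since $u_0$ is bounded, the first term is bounded uniformly in $t$ and $x$, so the whole matter reduces to controlling the stochastic integral. The essential analytic input is the $L^2$ estimate for the time-fractional heat kernel, namely $\int_{\rd}G_t(x)^2\,\d x\le c\,t^{-d\beta/\alpha}$, which follows from the self-similarity $G_t(x)=t^{-d\beta/\alpha}G_1(xt^{-\beta/\alpha})$ together with $\int_{\rd}G_1(x)^2\,\d x<\infty$; the latter requires $d<2\alpha$, and I would take this heat-kernel bound from the preliminary results of Section 3.

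Applying the Walsh--Dalang isometry to the stochastic integral and using $\sigma(x)\le L_\sigma|x|$ gives, with $H_n(t):=\sup_{x\in\rd}\E|u^{(n)}_t(x)|^2$,
$$H_{n+1}(t)\le c_0+c\lambda^2\int_0^t (t-s)^{-d\beta/\alpha}H_n(s)\,\d s,$$
where $c_0=\sup_{t,x}|(\mathcal{G}u_0)_t(x)|^2$. The exponent $\eta:=d\beta/\alpha$ satisfies $\eta<1$ precisely because $d<\beta^{-1}\alpha$, so the kernel $(t-s)^{-\eta}$ is integrable and the iteration is well defined. Monotonicity of the map $H\mapsto c_0+c\lambda^2\int_0^{\,\cdot}(\cdot-s)^{-\eta}H(s)\,\d s$ together with a standard argument shows that $\sup_n H_n(t)<\infty$ locally uniformly in $t$; estimating $\sup_x\E|u^{(n+1)}_t(x)-u^{(n)}_t(x)|^2$ by the same device shows the iterates form a Cauchy sequence in the relevant norm, giving existence, and applying the inequality to the difference of two putative solutions yields uniqueness.

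The main obstacle --- and what separates this from a routine fixed-point argument --- is extracting the sharp exponent $\frac{2\alpha}{\alpha-d\beta}$ in $\lambda$. A crude Gronwall argument gives existence and some exponential bound, but not the correct power of $\lambda$. To obtain it I would invoke the weakly singular (Gronwall--Henry) comparison: the function $H(t):=\sup_n H_n(t)$ satisfies $H\le c_0+c\lambda^2 K H$, where $K$ is, up to a Gamma factor, the Riemann--Liouville integral of order $1-\eta$, whose resolvent is a Mittag-Leffler function. Hence
$$\sup_{x\in\rd}\E|u_t(x)|^2\le c_0\,E_{1-\eta}\!\big(c\,\lambda^2\Gamma(1-\eta)\,t^{\,1-\eta}\big).$$
Writing $\rho:=1-\eta=(\alpha-d\beta)/\alpha$ and using the classical asymptotic $E_\rho(z)\sim \rho^{-1}\exp(z^{1/\rho})$ as $z\to\infty$, the argument satisfies $\big(c\lambda^2 t^\rho\big)^{1/\rho}=c'\lambda^{2/\rho}t$, which produces exactly the claimed bound $c_1\exp\!\big(c_2\lambda^{2\alpha/(\alpha-d\beta)}t\big)$, since $2/\rho=2\alpha/(\alpha-d\beta)$. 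The only care needed is to keep the $\lambda$-dependence explicit through the Mittag-Leffler asymptotics rather than absorbing it into generic constants.
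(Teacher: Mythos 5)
Your proposal is correct and follows essentially the same route as the paper: Picard iteration, the Walsh--Dalang isometry combined with the kernel bound $\int_{\R^d}G_t^2(x)\,\d x\le C t^{-d\beta/\alpha}$ (which the paper gets via Plancherel in Lemma \ref{Lem:Green1}, you via self-similarity), and then the weakly singular renewal inequality with $\rho=(\alpha-d\beta)/\alpha$ to extract the exponent $\kappa^{1/\rho}\sim\lambda^{2\alpha/(\alpha-d\beta)}$. The only cosmetic difference is that the paper cites this renewal inequality as Proposition \ref{prop:renewal-upper} (from \cite{foodun-liu-omaba-2014}), whereas you re-derive it through the Mittag-Leffler resolvent and its exponential asymptotics, which is precisely how that cited proposition is proved.
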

\begin{remark}
This theorem says that second moment grows at most exponentially.
While this has been known \cite{nane-mijena-2014}, the novelty here is that we give a
precise rate with respect to the parameter $
\lambda$. Theorem \ref{white:upperbound} implies that a random field solution exists when  $d<(2\wedge \beta^{-1})\alpha$.  It follows from this theorem that TFSPDEs in the case of space-time white noise  is that a random field solution exists in space dimension greater than 1 in some cases, in contrast to the  parabolic stochastic heat type equations, the case $\beta=1$. So in the case $\alpha=2, \beta<1/2$, a random field solution exists when $d=1,2,3$.
 When $\beta=1$ a random field solution exist only in spatial dimension $d=1$.

\end{remark}

The next theorem shows that under some additional condition, the second moment will have exponential growth. This greatly extends results of \cite{foondun-khoshnevisan-09}, \cite{foondun-khoshnevisan-13}, \cite{foodun-liu-omaba-2014} and \cite{Chen-le-2014}.

\begin{theorem}\label{white:lowerbound}
Suppose that the conditions of Theorem \ref{white:upperbound} are in force. Then under Assumption \ref{lowerbound}, there exists a $T>0$, such that
\begin{equation*}
\inf_{x\in B(0,\,t^{\beta/\alpha})}\E|u_t(x)|^2\geq c_3e^{c_4\lambda^{\frac{2\alpha}{\alpha-d\beta}}t}\quad \text{for\,all}\quad t>T.
\end{equation*}
Here $c_3$ and $c_4$ are positive constants.

\end{theorem}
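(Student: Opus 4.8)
The plan is to turn the mild formulation \eqref{mild-sol-white} into a fractional renewal inequality for the second moment and then read off the exponential rate from the asymptotics of the resulting renewal kernel. First I would apply the Walsh--Dalang isometry to \eqref{mild-sol-white}; since the stochastic integral is orthogonal to the deterministic term, this gives
\[
\E|u_t(x)|^2=|(\mathcal{G}u_0)_t(x)|^2+\lambda^2\int_0^t\!\!\int_{\R^d}G_{t-s}^2(x-y)\,\E|\sigma(u_s(y))|^2\,\d y\,\d s.
\]
Using the one-sided bound $\sigma(z)\geq l_\sigma|z|$ from Assumption \ref{lowerbound} and keeping the (nonnegative) initial term, I obtain
\[
\E|u_t(x)|^2\geq|(\mathcal{G}u_0)_t(x)|^2+l_\sigma^2\lambda^2\int_0^t\!\!\int_{\R^d}G_{t-s}^2(x-y)\,\E|u_s(y)|^2\,\d y\,\d s.
\]
The quantity to track is exactly the one appearing in the statement, $N(t):=\inf_{x\in B(0,t^{\beta/\alpha})}\E|u_t(x)|^2$.

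Next I would bring in the two heat-kernel facts that should be available from the preliminary section: a self-similar lower bound $G_r(z)\geq c\,r^{-d\beta/\alpha}$ valid for $|z|\leq r^{\beta/\alpha}$, and the resulting positivity of the initial contribution. Since $u_0$ is nonnegative on a set of positive measure, spreading this bound through the kernel yields $\inf_{x\in B(0,t^{\beta/\alpha})}|(\mathcal{G}u_0)_t(x)|^2\geq c\,t^{-2d\beta/\alpha}=:g(t)>0$. For the stochastic term I restrict the inner integral to those $y$ lying simultaneously in $B(0,s^{\beta/\alpha})$ (so that $\E|u_s(y)|^2\geq N(s)$) and in $B(x,(t-s)^{\beta/\alpha})$ (so that $G_{t-s}^2(x-y)\geq c\,(t-s)^{-2d\beta/\alpha}$). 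Provided the overlap of these two balls has measure $\gtrsim(t-s)^{d\beta/\alpha}$ uniformly in $x\in B(0,t^{\beta/\alpha})$, one is led to the fractional renewal inequality
\[
N(t)\geq g(t)+c\,l_\sigma^2\lambda^2\int_0^t(t-s)^{-d\beta/\alpha}N(s)\,\d s.
\]

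To finish, I would compare $N$ with the solution $H$ of the associated renewal equation $H=g+c\,l_\sigma^2\lambda^2\,k*H$, where $k(r)=r^{-d\beta/\alpha}$; iterating gives $N\geq H=\sum_{n\geq 0}(c\,l_\sigma^2\lambda^2 k)^{*n}*g$. The convolution powers of $k$ are explicit, and $H$ is of Mittag-Leffler type: taking the Laplace transform, $\widehat k(z)=\Gamma(1-d\beta/\alpha)z^{d\beta/\alpha-1}$, so $\widehat H$ has its leading singularity at the root $z_0$ of $c\,l_\sigma^2\lambda^2\Gamma(1-d\beta/\alpha)z_0^{d\beta/\alpha-1}=1$, namely
\[
z_0=\bigl(c\,l_\sigma^2\Gamma(1-d\beta/\alpha)\bigr)^{\frac{\alpha}{\alpha-d\beta}}\lambda^{\frac{2\alpha}{\alpha-d\beta}}.
\]
A Tauberian/renewal argument (equivalently, the $E_{1-d\beta/\alpha}$ asymptotics $E_\mu(w)\sim\mu^{-1}e^{w^{1/\mu}}$) then yields $H(t)\geq c_3 e^{c_4\lambda^{2\alpha/(\alpha-d\beta)}t}$ for $t$ larger than some $T$, with $c_4\propto z_0\lambda^{-2\alpha/(\alpha-d\beta)}$, which is the claimed bound; the threshold $T$ absorbs the crossover after which the exponential term dominates the polynomial forcing $g$.

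I expect the main obstacle to be the uniform overlap estimate over the \emph{growing} ball $B(0,t^{\beta/\alpha})$. The essential structural input is that $r\mapsto r^{\beta/\alpha}$ is subadditive, which holds precisely because $\beta/\alpha<1$ under the standing assumption $d<\beta^{-1}\alpha$ (indeed $\alpha>d\beta\geq\beta$); subadditivity guarantees $t^{\beta/\alpha}\leq s^{\beta/\alpha}+(t-s)^{\beta/\alpha}$, so the two balls always meet. The delicate point is that for $x$ near the boundary of $B(0,t^{\beta/\alpha})$ and $s$ small the overlap degenerates, so the clean bound $\gtrsim(t-s)^{d\beta/\alpha}$ is only uniform after restricting the time integration to $s\in[t/2,t]$, where a quantitative version of subadditivity (e.g.\ $(1-u^{\beta/\alpha})/(1-u)^{\beta/\alpha}\leq 2^{\beta/\alpha}-1<1$ for $u=s/t\in[1/2,1)$) keeps the penetration depth comparable to $(t-s)^{\beta/\alpha}$. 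Carrying this truncation through the renewal comparison, rather than the geometry itself, is where the real care is needed.
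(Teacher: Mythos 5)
Your route is viable and genuinely different in structure from the paper's. The paper never forms a closed renewal inequality for a ball-infimum: it iterates the Walsh-isometry identity indefinitely to obtain the chaos-type series \eqref{infinite-sum}, and then lower-bounds every term of that series by chaining the spatial variables ($z_i\in B(z_{i-1},s_i^{\beta/\alpha})\cap B(0,t^{\beta/\alpha})$), shrinking each time variable to $[0,t/k]$, and summing with Lemma \ref{lemma:exp-lower-bound}; this is Proposition \ref{whitenoiselowbd}. Because the initial-data factor there is evaluated at the final chain point $z_k$ at an arbitrary time $s_k\le t$, the paper needs the uniform polynomial bound of Lemma \ref{lemma:lower-bound-frac-diffusion} and Remark \ref{lower-bound-cauchy-problem}, which is why its statement carries the time shift $t_0$. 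Your bookkeeping through $N(t)=\inf_{x\in B(0,t^{\beta/\alpha})}\E|u_t(x)|^2$ avoids precisely that machinery: you only need the deterministic term at matched time and ball, where Lemma \ref{Lemma:G-bounds}(a) gives $g(t)\gtrsim t^{-2d\beta/\alpha}$ for large $t$, and your quantitative subadditivity bound (the function $u\mapsto(1-u^{\beta/\alpha})/(1-u)^{\beta/\alpha}$ is decreasing, hence at most $2^{\beta/\alpha}-1<1$ on $[1/2,1)$) is correct, so the overlap estimate $\gtrsim(t-s)^{d\beta/\alpha}$ is indeed uniform over $x\in B(0,t^{\beta/\alpha})$ once $s\in[t/2,t]$.

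The one genuine gap is the final step, exactly where you flagged it, and as written it does not go through. What your geometry actually proves is
\begin{equation*}
N(t)\ \geq\ g(t)+c\,l_\sigma^2\lambda^2\int_{t/2}^{t}(t-s)^{-d\beta/\alpha}\,N(s)\,\d s,
\end{equation*}
whose kernel is not a convolution kernel (the cutoff moves with $t$) and whose forcing decays in $t$. The Laplace/Tauberian computation you invoke (locating the root $z_0$ of $c\,l_\sigma^2\lambda^2\Gamma(1-d\beta/\alpha)z^{d\beta/\alpha-1}=1$) is valid for the untruncated convolution inequality, which you do not have; and the converse renewal proposition the paper imports from \cite{foodun-liu-omaba-2014} does not apply verbatim either, since it requires integration over all of $[0,t]$ and a \emph{constant} forcing term. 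The repair is to iterate your truncated inequality directly: the $k$-fold nested time region contains the box where every increment lies in $[0,t/(2k)]$ (then each intermediate time stays in $[t/2,t]$, so all the moving truncations are respected and the forcing along the chain is at least $g(t)$), each step contributes a factor of at least $c\,(t/(2k))^{1-d\beta/\alpha}$ from the time integral of the overlap bound, and hence $N(t)\geq g(t)\sum_{k\geq 0}\bigl(c\,l_\sigma^2\lambda^2\bigr)^k\bigl(t/(2k)\bigr)^{k(\alpha-d\beta)/\alpha}$; Lemma \ref{lemma:exp-lower-bound} then yields growth $\exp\bigl(c\lambda^{2\alpha/(\alpha-d\beta)}t\bigr)$, and the polynomial $g(t)$ is absorbed for $t>T$. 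Note that this repair is, in substance, the paper's Proposition \ref{whitenoiselowbd}: your formulation genuinely simplifies the initial-data input, but to close the argument you must fall back on the same series-plus-Lemma \ref{lemma:exp-lower-bound} mechanism (or prove an honest renewal theorem for truncated kernels with decaying forcing), not on the off-the-shelf Tauberian argument.
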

 The lower bound in the previous theorem  is completely
new. Most of the results of these kinds have been derived from the renewal
theoretic ideas developed in \cite{foondun-khoshnevisan-09} and \cite{foondun-khoshnevisan-13}. The methods used in this article are
completely different. In particular, we make use of a localisation argument together
with heat kernel estimates for the  time fractional diffusion equation.

\begin{remark}
The two theorems above imply that, under some conditions, there exist some positive constants $a_5$ and $a_6$ such that,
 $$
a_5\lambda^{2\a/(\a-\beta d)}\leq \liminf_{t\to\infty}\frac{1}{t}\log \E|u_t(x)|^2\leq \limsup_{t\to\infty}\frac{1}{t}\log \E|u_t(x)|^2\leq a_6\lambda^{2\a/(\a-\beta d)} ,
 $$
 for any fixed $x\in \rd$.

 The exponential growth of the second moment of the solution have  been proved under the assumption that the initial function is bounded form below in \cite{mijena-nane-2}. This exponential growth property have been proved by \cite{foondun-khoshnevisan-09} when $\beta=1$ and $d=1$ when the initial function is also bounded from below. When $\beta=1$, and  the initial function satisfies the assumption \ref{lowerbound}, this was established by \cite{foodun-liu-omaba-2014}. Chen \cite{Chen-le-2014} has established intermittency of the solution of \eqref{tfspde} when $d=1$ and $\beta\in (0,1)$ and $\beta\in (1,2)$ with measure-valued initial data.

\end{remark}
We will need the following definition which we borrow from \cite{Khoshnevisan-kim}. Set
\begin{equation*}
\sE_t(\lambda):=\sqrt{\int_{\R^d}\E|u_t(x)|^2\,\d x}.
\end{equation*}

and define the nonlinear excitation index by
\begin{equation*}
e(t):=\lim_{\lambda \rightarrow \infty }\frac{\log \log \sE_t(\lambda)}{\log \lambda}.
\end{equation*}

The next theorem gives the rate of growth of the second moment with respect
to the parameter $\lambda$, which extends results in \cite{foodun-liu-omaba-2014}. We note that for time
$t$ large enough, this follows from the theorem above. But for small $t$, we need
to work a bit harder.
\begin{theorem}\label{thm:limit-lambda-white}
Fix $t>0$ and $x\in \R^d$, we then have
\begin{equation*}
\lim_{\lambda\rightarrow \infty} \frac{\log \log \E|u_t(x)|^2}{\log \lambda}=\frac{2\alpha}{\alpha-d\beta}.
\end{equation*}
Moreover, if the energy of the solution exists, then the excitation index, $e(t)$ is also equal to $\frac{2\alpha}{\alpha-d\beta}$.
\end{theorem}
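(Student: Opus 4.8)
The plan is to sandwich $\E|u_t(x)|^2$ between two functions of $\lambda$ that both grow, for each fixed $t>0$, like $\exp\bigl(c\,\lambda^{2\alpha/(\alpha-d\beta)}t\bigr)$, and then read off the double-logarithmic limit. Writing $p:=2\alpha/(\alpha-d\beta)$, the upper direction is immediate from Theorem \ref{white:upperbound}: from $\E|u_t(x)|^2\le c_1e^{c_2\lambda^{p}t}$ we get $\log\log\E|u_t(x)|^2\le \log\!\bigl(\log c_1+c_2 t\,\lambda^{p}\bigr)$, whose ratio to $\log\lambda$ tends to $p$, so $\limsup_{\lambda\to\infty}(\log\log\E|u_t(x)|^2)/\log\lambda\le p$ for every $t>0$. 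For the matching lower bound when $t$ is large I would simply invoke Theorem \ref{white:lowerbound}: for $t>T$ and $x\in B(0,t^{\beta/\alpha})$ one has $\E|u_t(x)|^2\ge c_3 e^{c_4\lambda^{p}t}$, giving $\liminf_{\lambda\to\infty}(\log\log\E|u_t(x)|^2)/\log\lambda\ge p$, which already settles the first assertion for $t>T$.

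The real work is the lower bound for small $t$, where the localisation argument behind Theorem \ref{white:lowerbound} (which produces exponential growth only past the threshold $T$) is unavailable. Here I would return to the mild formulation \eqref{mild-sol-white}. Taking second moments and using the Walsh--Dalang isometry together with the mean-zero property of the stochastic integral gives the renewal identity $\E|u_t(x)|^2=|(\mathcal{G}u_0)_t(x)|^2+\lambda^2\int_0^t\int_{\rd}G_{t-s}^2(x-y)\,\E|\sigma(u_s(y))|^2\,\d y\,\d s$. Invoking the lower Lipschitz bound $\sigma(z)\ge l_\sigma|z|$ of Assumption \ref{lowerbound}, restricting the spatial variable to the ball $B(0,s^{\beta/\alpha})$ on which the time-fractional kernel admits a lower bound (a consequence of the kernel estimates of Section 3), and using the self-similar scaling $\int_{\rd}G_s^2(y)\,\d y\asymp s^{-d\beta/\alpha}$, I would reduce to the scalar fractional-integral inequality $\phi(t)\ge c_0+\theta\int_0^t(t-s)^{-d\beta/\alpha}\phi(s)\,\d s$ for $\phi(t):=\inf_{x\in B(0,t^{\beta/\alpha})}\E|u_t(x)|^2$, with $\theta=c\,\lambda^2 l_\sigma^2$ and $c_0>0$ coming from the initial-data term.

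Iterating this inequality produces the series $\phi(t)\ge c_0\sum_{n\ge0}\frac{(\theta\,\Gamma(1-d\beta/\alpha))^n}{\Gamma(n(1-d\beta/\alpha)+1)}t^{n(1-d\beta/\alpha)}=c_0\,E_{1-d\beta/\alpha}\!\bigl(\theta\,\Gamma(1-d\beta/\alpha)\,t^{1-d\beta/\alpha}\bigr)$, a Mittag-Leffler function of order $a:=1-d\beta/\alpha\in(0,1)$. Its asymptotic $E_a(z)\sim a^{-1}e^{z^{1/a}}$ as $z\to\infty$ then yields, for each fixed $t>0$, $\phi(t)\ge c'\exp\!\bigl(C\,\lambda^{p}t\bigr)$, since $(1-d\beta/\alpha)^{-1}=\alpha/(\alpha-d\beta)$ converts $\theta^{1/(1-d\beta/\alpha)}$ into a constant multiple of $\lambda^{p}$. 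This closes the small-$t$ gap uniformly over $t>0$ and establishes $\lim_{\lambda\to\infty}(\log\log\E|u_t(x)|^2)/\log\lambda=p$. The main obstacle I anticipate is technical rather than conceptual: justifying the passage to the scalar renewal inequality, namely guaranteeing that after localisation the kernel lower bound genuinely holds on the relevant region with a scaling constant uniform in $s\in(0,t]$, and then \emph{rigorously} comparing $\phi$ with the Mittag-Leffler solution via a fractional Gronwall argument rather than merely matching leading asymptotics.

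Finally, for the excitation index I would bound $\sE_t(\lambda)^2=\int_{\rd}\E|u_t(x)|^2\,\d x$ from below by integrating only over $B(0,t^{\beta/\alpha})$, giving $\sE_t(\lambda)^2\ge |B(0,t^{\beta/\alpha})|\,\phi(t)\ge c''\exp(C\lambda^{p}t)$, and from above by the integrated analogue of Theorem \ref{white:upperbound}, which under the standing hypothesis that the energy is finite yields $\sE_t(\lambda)^2\le C(t)e^{c_2\lambda^{p}t}$. Taking $\log\log$ of each bound and dividing by $\log\lambda$ shows $e(t)=p=\frac{2\alpha}{\alpha-d\beta}$, because the square root and all $t$-dependent prefactors are negligible on the double-logarithmic scale.
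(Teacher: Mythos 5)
Your upper bound and your treatment of large $t$ coincide with the paper's first two steps and are fine; the gap is in the small-$t$ lower bound, which is the actual content of the theorem, and it is conceptual rather than technical. To pass from the Walsh isometry to the scalar inequality $\phi(t)\geq c_0+\theta\int_0^t(t-s)^{-d\beta/\alpha}\phi(s)\,\d s$ with $\phi(s)=\inf_{x\in B(0,s^{\beta/\alpha})}\E|u_s(x)|^2$, you need, uniformly for $x\in B(0,t^{\beta/\alpha})$ and $0<s<t$, the kernel bound $\int_{B(0,s^{\beta/\alpha})}G^2_{t-s}(x-y)\,\d y\geq c\,(t-s)^{-d\beta/\alpha}$. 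This is false: the ball you localise on has radius $s^{\beta/\alpha}$, dictated by the initial-data estimate, while the kernel's natural scale is $(t-s)^{\beta/\alpha}$, and the two are mismatched. Concretely, fix $t$ and $|x|$ of order $t^{\beta/\alpha}$ and let $s\to0$: then $|x-y|$ stays of order $t^{\beta/\alpha}$ for all $y\in B(0,s^{\beta/\alpha})$, so $\int_{B(0,s^{\beta/\alpha})}G^2_{t-s}(x-y)\,\d y=O(s^{d\beta/\alpha})\to0$, while $(t-s)^{-d\beta/\alpha}\to t^{-d\beta/\alpha}>0$. Hence the renewal inequality you want to feed into the fractional Gronwall/Mittag--Leffler comparison cannot be established; this is exactly why the paper declares the renewal-theoretic route unavailable under Assumption \ref{lowerbound} and proceeds differently. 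Its proof iterates the isometry into the chaos-type series \eqref{infinite-sum} and localises \emph{each} spatial integral on nested balls $z_i\in B(z_{i-1},\,s_i^{\beta/\alpha})\cap B(0,\,t^{\beta/\alpha})$, so that each kernel factor is evaluated at a distance at most $s_i^{\beta/\alpha}$ matching its own time variable $s_i$ after the change of variables; each step then contributes volume $\asymp s_i^{d\beta/\alpha}$ times kernel squared $\asymp s_i^{-2d\beta/\alpha}$, i.e.\ an integrable $s_i^{-d\beta/\alpha}$, and the resulting series $g_t^2\sum_{k\ge0}(\lambda^2l_\sigma^2c_1)^k(t/k)^{k(\alpha-\beta d)/\alpha}$ of Proposition \ref{whitenoiselowbd} is converted into $\exp(c\lambda^{2\alpha/(\alpha-d\beta)}t)$ by Lemma \ref{lemma:exp-lower-bound}, for every fixed $t>0$ once $\lambda$ is large. (A further inaccuracy: under Assumption \ref{lowerbound} your free term $c_0$ is not a positive constant; the initial-data term is only bounded below by $g_t^2$ with $g_t$ decaying polynomially, by Lemma \ref{lemma:lower-bound-frac-diffusion} and Remark \ref{lower-bound-cauchy-problem}. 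For fixed $t$ this is harmless, but it must be carried through the iteration.)

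There is a second gap: the theorem concerns a \emph{fixed} $x\in\rd$, and for small $t$ (indeed for all $t<|x|^{\alpha/\beta}$) the point $x$ lies outside $B(0,t^{\beta/\alpha})$, so even a correct exponential lower bound on $\phi(t)=\inf_{x\in B(0,t^{\beta/\alpha})}\E|u_t(x)|^2$ says nothing about $\E|u_t(x)|^2$ at your given $x$; your claim that the Mittag--Leffler bound ``closes the small-$t$ gap uniformly over $t>0$'' overlooks this. The paper handles it explicitly: if $x\notin B(0,t^{\beta/\alpha})$, choose $\kappa>0$ with $x\in B(0,(\kappa t)^{\beta/\alpha})$ and rerun the localisation of Proposition \ref{whitenoiselowbd} in the larger ball, obtaining $\E|u_{t+t_0}(x)|^2\geq g^2_{\kappa t}\sum_{k\ge0}(\lambda^2l_\sigma^2c_1)^k(t/k)^{k(\alpha-\beta d)/\alpha}$, where the prefactor $g^2_{\kappa t}$ is again only polynomially small and disappears under the double logarithm. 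Your final step for the excitation index (integrating the two-sided bounds over $\rd$, respectively over $B(0,t^{\beta/\alpha})$) is fine once the pointwise lower bound is in place, and it matches the paper's intent; but as written the central small-$t$, fixed-$x$ lower bound requires the paper's multi-scale localisation machinery, not a fractional Gronwall argument.
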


Note that for the energy of the solution to exists, we need some assumption on the initial condition. One can always impose boundedness with compact support.

The following theorem is essentially Theorem 2 in \cite{nane-mijena-2014}. We only state it to compare the H\"older exponent with the excitation index. This shows that the relationship mentioned in \cite{foodun-liu-omaba-2014} holds for this equation as well: $\eta\leq 1/e(t)$. Hence showcasing a link between noise excitability and continuity of
the solution.

\begin{theorem}[\cite{nane-mijena-2014}]\label{thm:holder-exponent}
Let $\eta< (\a-\beta d)/2\a$ then for every $x\in \rd$, $\{u_t(x), t>0\}$, the solution to \eqref{tfspde} has H\"older continuous trajectories with exponent $\eta$.
\end{theorem}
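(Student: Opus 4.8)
The plan is to establish temporal H\"older continuity via the Kolmogorov continuity criterion, so the core task is to bound the moments of the increments $\E|u_t(x)-u_{t'}(x)|^p$ for fixed $x$ and $p$ large, uniformly for $t,t'$ in a compact subinterval of $(0,\infty)$. Assume $0<t'<t$ and split the mild solution \eqref{mild-sol-white} into its deterministic part $(\mathcal{G}u_0)_t(x)$ and the stochastic convolution. The deterministic part is the gentler term: since $u_0$ is bounded and $\int_\rd G_t(y)\,\d y=1$, its increment is controlled by $\|u_0\|_\infty\int_\rd|G_t(y)-G_{t'}(y)|\,\d y$, which by the smoothing of $G$ away from $t=0$ is H\"older in time of order at least as large as the one we seek, so it imposes no constraint on $\eta$. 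Hence the exponent is dictated by the stochastic term.

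For the stochastic convolution I would first reduce higher moments to the second moment. Because the integrand is adapted and the noise is Gaussian, the Burkholder--Davis--Gundy inequality together with the linear-growth bound $\sigma(x)\le L_\sigma|x|$ and the uniform bound $\sup_{s\le T,\,y}\E|u_s(y)|^2<\infty$ coming from Theorem \ref{white:upperbound} yield $\E|u_t(x)-u_{t'}(x)|^p\le C_p\bigl(\E|u_t(x)-u_{t'}(x)|^2\bigr)^{p/2}$, up to the deterministic contribution. Thus it suffices to produce an $L^2$ bound of the form $\E|u_t(x)-u_{t'}(x)|^2\le C|t-t'|^{(\alpha-\beta d)/\alpha}$. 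By the Walsh--Dalang isometry and the independence of the noise over the disjoint time windows $[0,t']$ and $[t',t]$, the squared $L^2$-increment of the stochastic term splits as
\begin{equation*}
\int_0^{t'}\!\!\int_\rd\bigl[G_{t-s}(x-y)-G_{t'-s}(x-y)\bigr]^2\E|\sigma(u_s(y))|^2\,\d y\,\d s+\int_{t'}^{t}\!\!\int_\rd G_{t-s}(x-y)^2\,\E|\sigma(u_s(y))|^2\,\d y\,\d s,
\end{equation*}
and the moment bound lets me replace each $\E|\sigma(u_s(y))|^2$ by a constant, leaving two purely deterministic kernel integrals.

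The near-diagonal term is handled by the self-similar scaling of the time-fractional kernel, $\int_\rd G_s(y)^2\,\d y=c\,s^{-\beta d/\alpha}$ (finite precisely because $d<\beta^{-1}\alpha$), which gives $\int_{t'}^t\!\int_\rd G_{t-s}^2\,\d y\,\d s=c\int_0^{t-t'}r^{-\beta d/\alpha}\,\d r=c'\,(t-t')^{(\alpha-\beta d)/\alpha}$, exactly the exponent $2\eta$ we are after. I expect the kernel-difference term to be the main obstacle: bounding $\int_0^{t'}\!\int_\rd[G_{t-s}(y)-G_{t'-s}(y)]^2\,\d y\,\d s$ by the same power $(t-t')^{(\alpha-\beta d)/\alpha}$ requires quantitative control of the temporal increment of $G$. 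The natural route is Plancherel, writing $\int_\rd[G_a-G_b]^2\,\d y=c\int_\rd|\hat G_a(\xi)-\hat G_b(\xi)|^2\,\d\xi$ with $\hat G_t(\xi)=E_\beta(-\nu t^\beta|\xi|^\alpha)$ the Mittag-Leffler function, and then exploiting the Lipschitz-type and decay estimates for $E_\beta$ from the heat-kernel section to extract the correct power of $|t-t'|$ after integrating in $\xi$. Granting that estimate, summing the two contributions gives the desired $L^2$ bound; feeding it into the moment reduction above and applying Kolmogorov's theorem (letting $p\to\infty$ to remove the $1/p$ loss) yields H\"older continuity for every $\eta<(\alpha-\beta d)/(2\alpha)$.
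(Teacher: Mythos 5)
Your skeleton (Kolmogorov criterion, splitting off the deterministic term, separating the stochastic increment into a kernel-difference integral over $[0,t']$ and a near-diagonal integral over $[t',t]$, Fourier/Plancherel for the kernels) is exactly the architecture the paper uses when it proves the colored-noise analogue of this statement in Section 5; note the paper does not reprove the white-noise case itself but cites \cite{nane-mijena-2014}. However, your moment-reduction step is genuinely wrong. The inequality $\E|u_t(x)-u_{t'}(x)|^p\le C_p\bigl(\E|u_t(x)-u_{t'}(x)|^2\bigr)^{p/2}$ does not follow from Burkholder--Davis--Gundy: BDG bounds the $p$-th moment of the stochastic convolution by $\E\bigl[\bigl(\int_0^t\int_{\rd}[\cdots]^2\sigma(u_s(y))^2\,\d y\,\d s\bigr)^{p/2}\bigr]$, i.e.\ by the $p/2$-th moment of the quadratic variation, which involves $p$-th moments of $u$ inside the integral, not second moments. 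Since $\sigma(u_s(y))$ is a random adapted integrand, the increment is not Gaussian and lies in no fixed Wiener chaos, so there is no hypercontractivity to invoke; indeed intermittency --- the very phenomenon quantified by Theorems \ref{white:upperbound} and \ref{white:lowerbound} --- is precisely the statement that $p$-th moments grow strictly faster than the $p/2$ power of second moments, so such a constant $C_p$ cannot exist uniformly in time. The correct route is BDG followed by Minkowski's integral inequality (valid since $p/2\ge 1$), which replaces $\E|\sigma(u_s(y))|^2$ by $(\E|\sigma(u_s(y))|^p)^{2/p}$ and therefore requires the a priori bound $\sup_{s\le T}\sup_{y\in\rd}\E|u_s(y)|^p<\infty$. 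That bound needs its own (standard) Picard/renewal argument with $p$-th moments; this is exactly what the paper invokes in its colored-noise proof, and your argument as written skips it.

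The second problem is that you defer the crucial kernel-difference estimate, $\int_0^{t'}\int_{\rd}[G_{t-s}(y)-G_{t'-s}(y)]^2\,\d y\,\d s\le C(t-t')^{1-\beta d/\alpha}$, with ``granting that estimate,'' and the route you sketch (Lipschitz-type bounds on $E_\beta$) is more delicate than what is needed: a pointwise mean-value bound on $E_\beta$, once squared and integrated in $\xi$, diverges at large $|\xi|$ unless carefully interpolated against the decay of $E_\beta$. The paper's Proposition \ref{prop:temporal-increment-bound} (stated for the colored weight $|\xi|^{-(d-\gamma)}$; the identical proof with weight $1$ and $\gamma$ replaced by $d$ gives the white-noise case) avoids this entirely by a monotonicity trick: since $E_\beta(-\cdot)$ is completely monotone, $\hat{G}_{a+h}(\xi)\le \hat{G}_a(\xi)$ pointwise, hence $|\hat{G}_{a+h}(\xi)-\hat{G}_a(\xi)|^2\le \hat{G}_a(\xi)^2-\hat{G}_{a+h}(\xi)^2$; integrating in $\xi$ via Lemma \ref{Lem:Green1} gives $C^\ast\bigl[a^{-\beta d/\alpha}-(a+h)^{-\beta d/\alpha}\bigr]$, and the subsequent time integration telescopes to exactly $C h^{1-\beta d/\alpha}$. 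So you have identified the right target exponent and the right Fourier framework, but the two load-bearing steps of the proof --- the reduction of $p$-th moments and the temporal increment bound for the kernel --- are respectively incorrect and missing.
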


All the above results were about the white noise driven equation. Our first result on space colored noise case reads as follows.
\begin{theorem}\label{thm-colored-noise} Under the Assumption \ref{existence-uniqueness},
there exists a unique random field solution $u_t$  of \eqref{tfspde-colored} whose second moment satisfies
\begin{equation*}
\sup_{x\in \R^d}\E|u_t(x)|^2\leq c_5 \exp(c_6\lambda^{2\alpha/(\alpha-\gamma\beta)}t)\quad\text{for\,all}\quad t>0.
\end{equation*}
Here the constants $c_5,  c_6$  are positive numbers.  If we impose the further requirement that Assumption \ref{lowerbound} holds, then there exists a $T>0$ such that
\begin{equation*}
\inf_{x\in B(0,\,t^{\beta/\alpha})}\E|u_t(x)|^2\geq c_7 \exp(c_8\lambda^{2\alpha/(\alpha-\gamma\beta)}t)\quad\text{for\,all}\quad t>T,
\end{equation*}
where $T$ and the constants $c_7, c_8$ are positive numbers.
\end{theorem}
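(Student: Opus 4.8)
The plan is to follow the same three-step scheme used for the white-noise equation in Theorems \ref{white:upperbound} and \ref{white:lowerbound}, replacing the Walsh--Dalang isometry for white noise by its colored-noise analogue coming from \eqref{covariance-colored}. The single analytic input that drives everything is the space-time scaling of the kernel $G_t$, namely $G_t(x)=t^{-\beta d/\alpha}\,\Psi(|x|t^{-\beta/\alpha})$ for a fixed profile $\Psi$, together with the two-sided heat-kernel bounds established in Section 3. Writing $\tau=t-s$ and changing variables $y=x+\tau^{\beta/\alpha}\tilde y$, $z=x+\tau^{\beta/\alpha}\tilde z$, one finds
$$
\int_{\rd}\int_{\rd}G_\tau(x-y)G_\tau(x-z)|y-z|^{-\gamma}\,\d y\,\d z
= C\,\tau^{-\gamma\beta/\alpha},
$$
and this constant is finite precisely because $\gamma<d$, while the exponent satisfies $\gamma\beta/\alpha<1$ because $\gamma<\alpha$ and $\beta<1$. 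This is the colored-noise substitute for the $L^2$-norm computation that produced the exponent $d\beta/\alpha$ in the white-noise case.

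For existence and uniqueness I would run the usual Picard iteration $u^{(0)}=\sG u_0$, $u^{(n+1)}_t(x)=(\sG u_0)_t(x)+\la\int_0^t\int_{\rd}G_{t-s}(x-y)\sigma(u^{(n)}_s(y))F(\d s\,\d y)$ in the Banach space with norm $\sup_{0\le t\le T}\sup_x e^{-\theta t}(\E|u_t(x)|^2)^{1/2}$. Applying the colored-noise isometry, the Lipschitz bound on $\sigma$, and the displayed integral estimate reduces the iteration to a convolution with the integrable kernel $\tau^{-\gamma\beta/\alpha}$; choosing $\theta$ large makes the map a contraction, giving a unique solution. The upper bound then follows by setting $H(t)=\sup_x\E|u_t(x)|^2$, using $\sigma(x)\le L_\sigma|x|$ and Cauchy--Schwarz together with the same estimate to obtain the renewal inequality $H(t)\le c_1+c_2\la^2\int_0^t(t-s)^{-\gamma\beta/\alpha}H(s)\,\d s$. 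Taking Laplace transforms, the growth is governed by the root $\xi_0$ of $c_2\la^2\Gamma(1-\gamma\beta/\alpha)\,\xi^{\gamma\beta/\alpha-1}=1$, i.e. $\xi_0\asymp\la^{2\alpha/(\alpha-\gamma\beta)}$, which yields the claimed upper bound with exponent $2\alpha/(\alpha-\gamma\beta)$.

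For the lower bound I would work with the two-point function $\Phi_s(y,z):=\E[u_s(y)u_s(z)]$. Under Assumption \ref{lowerbound} one has $\sigma\ge0$ and $\sigma(x)\ge l_\sigma|x|$, so from the mild formulation and the pointwise inequality $|u_r(w)||u_r(w')|\ge u_r(w)u_r(w')$ one gets the closed lower renewal inequality $\Phi_s(y,z)\ge(\sG u_0)_s(y)(\sG u_0)_s(z)+\la^2 l_\sigma^2\int_0^s\int\int G_{s-r}(y-w)G_{s-r}(z-w')|w-w'|^{-\gamma}\Phi_r(w,w')\,\d w\,\d w'\,\d r$. Restricting $w,w'$ to $B(0,r^{\beta/\alpha})$, invoking the near-diagonal lower bound for $G$ from Section 3, and setting $g(r):=\inf_{y,z\in B(0,r^{\beta/\alpha})}\Phi_r(y,z)$ collapses this to $g(t)\ge c\,\la^2\int_0^t(t-r)^{-\gamma\beta/\alpha}g(r)\,\d r$ once $t$ is large enough that the initial term is activated. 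Since $\E|u_t(x)|^2=\Phi_t(x,x)\ge g(t)$ and $\inf_{x\in B(0,t^{\beta/\alpha})}\E|u_t(x)|^2\ge g(t)$, the lower-bound version of the renewal theorem applied to $g$ produces $g(t)\ge c_7\exp(c_8\la^{2\alpha/(\alpha-\gamma\beta)}t)$ for $t>T$.

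I expect the lower bound to be the main obstacle. Passing from the genuinely two-point object $\E[\sigma(u_s(y))\sigma(u_s(z))]$ to something controllable by a single scalar functional is where the argument could break down; the device that saves it is the positivity of $\sigma$ forced by Assumption \ref{lowerbound}, which lets one descend to $\Phi_s(y,z)$ and keep the inequality closed. The remaining delicate points are the uniform near-diagonal lower bound for $G_{t-r}$ over the moving ball $B(0,t^{\beta/\alpha})$ --- which is exactly why the infimum is taken over that $t$-dependent ball --- and the verification that the kernel $\tau^{-\gamma\beta/\alpha}$ is regular enough for the lower renewal theorem to return a matching exponent, together with pinning down the threshold $T$ after which the constant term can be absorbed.
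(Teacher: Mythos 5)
Your upper bound is sound and is essentially the paper's own argument: your scaling identity for $\int\int G_\tau(x-y)G_\tau(x-z)|y-z|^{-\gamma}\,\d y\,\d z$ is the content of Lemma \ref{Lem:Green1-gamma} and Lemma \ref{lemma:covariance-upper-bound} (the paper derives it by subordination and by Plancherel rather than by scaling), your weighted-norm contraction replaces the appeal to Walsh's Lemma 3.3, and your Laplace-transform treatment of the renewal inequality is exactly what Proposition \ref{prop:renewal-upper} delivers with $\rho=(\alpha-\gamma\beta)/\alpha$. These differences are cosmetic. (One small imprecision: finiteness of your constant $C$ requires $\gamma<2\alpha$ on the Fourier side, not merely $\gamma<d$; under the standing assumption $\gamma<\alpha\wedge d$ both hold.)

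The lower bound, however, has a genuine gap, located exactly at the step you call the ``collapse'' to $g(t)\geq c\lambda^2\int_0^t(t-r)^{-\gamma\beta/\alpha}g(r)\,\d r$. The near-diagonal lower bound of Lemma \ref{Lemma:G-bounds} gives $G_{t-r}(y-w)\geq c(t-r)^{-\beta d/\alpha}$ only when $|y-w|\leq (t-r)^{\beta/\alpha}$; outside that range only the far weaker tail bound is available. So the pairs $(w,w')$ that actually contribute at the claimed order are confined to balls of radius $(t-r)^{\beta/\alpha}$ around $y$ and around $z$. Since $g$ is an infimum over \emph{all} pairs $y,z\in B(0,t^{\beta/\alpha})$, including pairs with $|y-z|$ of order $t^{\beta/\alpha}$, for such pairs one has $|w-w'|$ of order $|y-z|$, hence $f(w,w')=|w-w'|^{-\gamma}\geq c\,t^{-\gamma\beta/\alpha}$ and no better. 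The inequality you can honestly close therefore has kernel $t^{-\gamma\beta/\alpha}$ (constant in $r$), not $(t-r)^{-\gamma\beta/\alpha}$, and a Gronwall argument then yields only $g(t)\geq c\exp\left(c\lambda^{2}t^{1-\gamma\beta/\alpha}\right)$: sub-exponential in $t$ and with $\lambda$-exponent $2$ rather than $2\alpha/(\alpha-\gamma\beta)$. A second, smaller, defect is that the unlabelled lower renewal proposition (Proposition 2.6 in \cite{foodun-liu-omaba-2014}) requires a strictly positive constant forcing term valid for all $t>0$, whereas your forcing $(\sG u_0)_r(y)(\sG u_0)_r(z)$ decays polynomially; without positive forcing, $g\equiv 0$ satisfies the homogeneous inequality.

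This breakdown is precisely why the paper does \emph{not} use renewal theory for the lower bound (it remarks that its method is ``completely different'' from the renewal-theoretic ideas of \cite{foondun-khoshnevisan-09}). Instead, it iterates the mild formulation indefinitely to obtain the multiple-integral expansion \eqref{recursion}, and localises each term along a chain: the time variables are restricted to $[0,t/k]$, and the spatial variables to $\sA_i=B(x,s_1^{\beta/\alpha}/2)\cap B(z_{i-1},s_i^{\beta/\alpha})$ and $\sA_i'$, so that every correlation factor is bounded below by $c\,s_1^{-\gamma\beta/\alpha}$ --- with $s_1\leq t/k$ a running integration variable, not frozen at the terminal time $t$ --- and every kernel pair by $c\,s_i^{-2\beta d/\alpha}$, compensated by volumes of order $s_i^{d\beta/\alpha}$. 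The $k$-th term is then bounded below by $g_t^2\left(c\lambda^2l_\sigma^2\right)^k(t/k)^{k(\alpha-\gamma\beta)/\alpha}$, and Lemma \ref{lemma:exp-lower-bound} (a combinatorial estimate, not a renewal theorem) applied with $b=c\lambda^2t^{\rho}$, $\rho=(\alpha-\gamma\beta)/\alpha$, converts the sum into $\exp\left(c\lambda^{2\alpha/(\alpha-\gamma\beta)}t\right)$. The essential gain over your collapse is that the correlation scale is tied to $s_1$ rather than to $t$; this is what produces the correct exponent in both $t$ and $\lambda$, and it cannot be reproduced by any single closed scalar inequality of the type you propose.
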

\begin{remark}
Theorem \ref{thm-colored-noise} implies that  there exist some positive constants $c_9$ and  $c_{10}$  such that
 $$
c_9\lambda^{2\a/(\a-\beta \gamma)}\leq \liminf_{t\to\infty}\frac{1}{t}\log \E|u_t(x)|^2\leq \limsup_{t\to\infty}\frac{1}{t}\log \E|u_t(x)|^2\leq c_{10}\lambda^{2\a/(\a-\beta \gamma)} ,
 $$
 for any fixed $x\in \rd$.
\end{remark}

\begin{theorem}\label{excitation-colored}
Fix $t>0$ and $x\in \R^d$, we then have
\begin{equation*}
\lim_{\lambda\rightarrow \infty} \frac{\log \log \E|u_t(x)|^2}{\log \lambda}=\frac{2\alpha}{\alpha-\gamma\beta}.
\end{equation*}
Moreover, if the energy of the solution exists, then the excitation index, $e(t)$ is also equal to $\frac{2\alpha}{\alpha-\gamma\beta}$.
\end{theorem}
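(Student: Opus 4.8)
The plan is to sandwich $\E|u_t(x)|^2$ between two expressions of the form $\exp(c\lambda^{\theta}t)$ with $\theta:=\frac{2\alpha}{\alpha-\gamma\beta}$, and then read off the double-logarithmic limit; write $L(\lambda):=\frac{\log\log\E|u_t(x)|^2}{\log\lambda}$. \textbf{Upper bound.} The upper bound of Theorem \ref{thm-colored-noise} gives, for every fixed $t>0$ and all $x$, the estimate $\E|u_t(x)|^2\le\sup_{y}\E|u_t(y)|^2\le c_5\exp(c_6\lambda^{\theta}t)$. Taking logarithms twice and dividing by $\log\lambda$, the additive constants and the factor $c_6t$ are absorbed, so $\limsup_{\lambda\to\infty}L(\lambda)\le\theta$. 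This half needs nothing beyond Theorem \ref{thm-colored-noise}.

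\textbf{Lower bound via a scaling reduction.} The lower bound of Theorem \ref{thm-colored-noise} holds only for $t>T$, so for fixed (possibly small) $t$ the effective time must be inflated through $\lambda$. I would exploit the self-similarity $G_t(x)=t^{-\beta d/\alpha}\,G_1(t^{-\beta/\alpha}x)$ of the kernel together with the scale covariance of the noise. For $a>0$ set $v_\tau(\xi):=u_{a\tau}(a^{\beta/\alpha}\xi)$. Substituting into the mild formulation \eqref{mild-sol-colored}, the deterministic term becomes $(\mathcal{G}v_0)_\tau(\xi)$ with $v_0=u_0(a^{\beta/\alpha}\,\cdot)$, while a change of variables in the stochastic integral, using the covariance \eqref{covariance-colored} with correlation $|x-y|^{-\gamma}$, shows that $v$ solves in distribution the same equation \eqref{tfspde-colored} with the same $\sigma$ but with noise level $\lambda_{\mathrm{eff}}=\lambda\,a^{(\alpha-\gamma\beta)/(2\alpha)}$. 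Choosing $a=\lambda^{-\theta}$ forces $\lambda_{\mathrm{eff}}=1$ and $\tau=t\lambda^{\theta}$; moreover $\lambda_{\mathrm{eff}}^{\theta}\tau=\lambda^{\theta}t$ is invariant under the choice of $a$.

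\textbf{Transferring Theorem \ref{thm-colored-noise} to $v$.} For large $\lambda$ one has $\tau=t\lambda^{\theta}>T$, and $v_0$ still satisfies Assumption \ref{existence-uniqueness} and Assumption \ref{lowerbound} (boundedness is preserved, and positivity on a set of positive measure is improved, since that set is dilated by $a^{-\beta/\alpha}\to\infty$). Applying the lower bound of Theorem \ref{thm-colored-noise} to $v$ yields, for $\xi\in B(0,\tau^{\beta/\alpha})$, the bound $\E|v_\tau(\xi)|^2\ge c_7\exp(c_8\lambda_{\mathrm{eff}}^{\theta}\tau)=c_7\exp(c_8\lambda^{\theta}t)$. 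Unwinding the scaling gives $\E|u_t(x)|^2=\E|v_\tau(x\lambda^{\theta\beta/\alpha})|^2$, and after the $\lambda$-powers cancel the constraint $\xi=x\lambda^{\theta\beta/\alpha}\in B(0,\tau^{\beta/\alpha})$ reduces to $|x|<t^{\beta/\alpha}$. Hence for every fixed $t>0$ and $|x|<t^{\beta/\alpha}$ one obtains $\E|u_t(x)|^2\ge c_7\exp(c_8\lambda^{\theta}t)$ for all large $\lambda$, so $\liminf_{\lambda\to\infty}L(\lambda)\ge\theta$ and the limit equals $\theta$. For $|x|\ge t^{\beta/\alpha}$ the same rate $\lambda^\theta t$ persists with only an $x$-dependent prefactor, since the localisation ball in Theorem \ref{thm-colored-noise} affects only the constant, and this is immaterial inside the double logarithm.

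\textbf{Excitation index and the main obstacle.} For $e(t)$ I would integrate the two-sided bound in $x$: the energy satisfies $\sE_t(\lambda)^2=\int_{\rd}\E|u_t(x)|^2\,\d x\le C(t)\exp(c_6\lambda^{\theta}t)$, finite once $u_0$ is, say, compactly supported, and $\sE_t(\lambda)^2\ge|B(0,t^{\beta/\alpha})|\,c_7\exp(c_8\lambda^{\theta}t)$. Since $\log\log\sqrt{\exp(c\lambda^{\theta}t)}\sim\theta\log\lambda$, both give $e(t)=\theta$. The hard part is the scaling step: making the heuristic rigorous as a distributional identity, in particular verifying that the rescaled colored noise has the same law and that the constants $c_7,c_8$ of Theorem \ref{thm-colored-noise} can be taken uniform along the $\lambda$-dependent family $v_0=u_0(\lambda^{-\theta\beta/\alpha}\,\cdot)$, which is precisely what lets one capture the small-$t$ regime that Theorem \ref{thm-colored-noise} alone does not reach.
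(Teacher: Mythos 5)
Your scaling route is genuinely different from the paper's argument, and its algebra checks out: the self-similarity $G_t(x)=t^{-\beta d/\alpha}G_1(t^{-\beta/\alpha}x)$ follows from \eqref{fouriertransformofG}, the effective noise level under $t\mapsto a\tau$, $x\mapsto a^{\beta/\alpha}\xi$ is indeed $\lambda_{\mathrm{eff}}=\lambda a^{(\alpha-\gamma\beta)/(2\alpha)}$, and the invariance $\lambda_{\mathrm{eff}}^{\theta}\tau=\lambda^{\theta}t$ with $\theta=\frac{2\alpha}{\alpha-\gamma\beta}$ is exactly right. The paper does something else: its proof (identical to that of Theorem \ref{thm:limit-lambda-white}) never rescales. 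It takes the upper bound from Theorem \ref{thm-colored-noise}, as you do, but for the lower bound it goes back to the series estimate of Proposition \ref{colorednoiselowbd},
\begin{equation*}
\E|u_{t+t_0}(x)|^2\geq g_t^2\sum_{k=0}^\infty \left(\lambda^2l_\sigma^2c_1\right)^k\left(\frac{t}{k}\right)^{k(\alpha-\gamma\beta)/\alpha},
\end{equation*}
which, thanks to Remark \ref{lower-bound-cauchy-problem}, holds for \emph{every} fixed $t>0$ with a prefactor $g_t^2$ that decays only polynomially. For fixed $t$ and $\lambda\to\infty$ the quantity $b=\lambda^2l_\sigma^2c_1t^{(\alpha-\gamma\beta)/\alpha}$ blows up, so Lemma \ref{lemma:exp-lower-bound} converts the series into $\exp\bigl(c\lambda^{2\alpha/(\alpha-\gamma\beta)}t\bigr)$ directly; points with $|x|\geq t^{\beta/\alpha}$ are handled by replacing $g_t$ with $g_{\kappa t}$. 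In other words, the paper keeps the $\lambda$-dependence explicit inside the series, so no large-time theorem ever needs to be transferred.

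The genuine gap in your proposal is precisely the step you flag and defer, and it is the crux of the theorem: Theorem \ref{thm-colored-noise} is stated for one fixed initial datum, while you must apply it along the $\lambda$-dependent family $v_0^{(\lambda)}=u_0(\lambda^{-\theta\beta/\alpha}\,\cdot\,)$. The uniformity you hope for is false in general. In the paper's machinery the initial datum enters the lower bound through Lemma \ref{stable-growth}, i.e. through the quantity $I(\lambda)=\int_{\R^d}p(t_0,2y)\,v_0^{(\lambda)}(y)\,\d y$; if, say, $u_0$ is supported in an annulus away from the origin, a change of variables together with $p(t_0,w)\asymp t_0|w|^{-d-\alpha}$ for large $|w|$ gives $I(\lambda)\asymp a^{\beta}=\lambda^{-\theta\beta}\to0$, so the prefactor $c_7$ degenerates -- your heuristic that dilating the positivity set ``improves'' matters is misleading, because the mass of $v_0^{(\lambda)}$ escapes to spatial infinity relative to the fixed kernel $p(t_0,\cdot)$. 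The argument can be completed, but only by proving the bookkeeping you omit: (i) $I(\lambda)\geq c\,a^{\beta}\int u_0(z)|z|^{-d-\alpha}\d z$, so the degeneration is at worst polynomial in $\lambda$ and sits only in the prefactor; (ii) the rate $c_8$ and the threshold $T$ come from kernel constants, $l_\sigma$ and Lemma \ref{lemma:exp-lower-bound}, hence are independent of the initial datum; (iii) a polynomial prefactor is invisible to $\log\log(\cdot)/\log\lambda$. Without (i)--(iii) the fixed small-$t$ lower bound -- the only content of this theorem beyond Theorem \ref{thm-colored-noise} -- is not established; with them, your proof is correct, though by then it involves essentially the same work as the paper's direct series argument.
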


We now give a relationship between the excitation index of \eqref{tfspde-colored}
\begin{theorem}\label{thm:holder-exponent}
Let $\eta< (\a-\beta \gamma)/2\a$ then for every $x\in \rd$, $\{u_t(x), t>0\}$, the solution to \eqref{tfspde-colored} has H\"older continuous trajectories with exponent $\eta$.
\end{theorem}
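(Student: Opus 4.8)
The plan is to bound the $p$-th moments of the temporal increments of $u$ and then invoke the Kolmogorov--Chentsov continuity criterion. Fix $x\in\rd$ and a horizon $T>0$, write $\eta_0:=(\alpha-\beta\gamma)/2\alpha$, and take $0<t'<t\le T$ with $h:=t-t'$. I would aim to prove that for every even integer $p$ there is a constant $C=C(p,T,x)$ with
\begin{equation*}
\E|u_t(x)-u_{t'}(x)|^p\le C\,h^{\eta_0 p}.
\end{equation*}
By Kolmogorov--Chentsov this yields a modification whose $t$-trajectories are H\"older continuous of every order below $\eta_0-1/p$; letting $p\to\infty$ gives every exponent $\eta<\eta_0=(\alpha-\beta\gamma)/2\alpha$, as claimed.

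Using the mild formulation \eqref{mild-sol-colored}, I would split $u_t(x)-u_{t'}(x)$ into the deterministic increment $(\mathcal{G}u_0)_t(x)-(\mathcal{G}u_0)_{t'}(x)$ and the stochastic increment. The deterministic part is controlled directly by the temporal regularity of $t\mapsto(\mathcal{G}u_0)_t(x)$ established in the preliminary section together with the boundedness of $u_0$, and contributes a power of $h$ at least as large as required. The stochastic increment I would decompose as
\begin{equation*}
\lambda\int_{t'}^{t}\!\int_\rd G_{t-s}(x-y)\sigma(u_s(y))\,F(\d s\,\d y)+\lambda\int_0^{t'}\!\int_\rd\bigl[G_{t-s}(x-y)-G_{t'-s}(x-y)\bigr]\sigma(u_s(y))\,F(\d s\,\d y),
\end{equation*}
the first integral accounting for the freshly added interval $[t',t]$ and the second for the change of kernel over the common interval $[0,t']$.

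To each piece I would apply the Burkholder--Davis--Gundy inequality for the colored Wiener integral, which, combined with Minkowski's inequality, bounds the $p$-th moment by the $(p/2)$-th power of the associated covariance integral, with the $L^p(\Omega)$-norms $\|\sigma(u_s(y))\|_p$ entering as weights. Invoking Assumption \ref{existence-uniqueness} in the form $\sigma(z)\le L_\sigma|z|$ together with the finiteness of $\sup_{s\le T,\,y\in\rd}\|u_s(y)\|_p$ (which follows, as in the existence argument, from a Burkholder--Gronwall iteration), these weights are bounded by a constant, and the estimate reduces to a purely analytic one. Passing to Fourier variables and using that the Riesz correlation $|\cdot|^{-\gamma}$ has symbol $c|\xi|^{\gamma-d}$ while $\widehat{G}_t(\xi)=E_\beta(-\nu t^\beta|\xi|^\alpha)$ for the Mittag-Leffler function $E_\beta$, the two covariance integrals become
\begin{equation*}
\int_{t'}^{t}\!\int_\rd E_\beta\bigl(-\nu(t-s)^\beta|\xi|^\alpha\bigr)^2\,|\xi|^{\gamma-d}\,\d\xi\,\d s
\end{equation*}
and
\begin{equation*}
\int_0^{t'}\!\int_\rd\bigl|E_\beta\bigl(-\nu(t-s)^\beta|\xi|^\alpha\bigr)-E_\beta\bigl(-\nu(t'-s)^\beta|\xi|^\alpha\bigr)\bigr|^2\,|\xi|^{\gamma-d}\,\d\xi\,\d s.
\end{equation*}

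The heart of the matter is to show that both integrals are of order $h^{(\alpha-\beta\gamma)/\alpha}=h^{2\eta_0}$; raising to the power $p/2$ then produces exactly $h^{\eta_0 p}$. The first integral is the easier one: after the scaling $\xi\mapsto(t-s)^{-\beta/\alpha}\xi$ the radial integral is a finite constant (guaranteed by the standing assumption $\gamma<\alpha$), leaving the factor $(t-s)^{-\beta\gamma/\alpha}$ whose integral over $[t',t]$ equals $c\,h^{1-\beta\gamma/\alpha}=c\,h^{(\alpha-\beta\gamma)/\alpha}$. The main obstacle is the second integral, where the temporal increment of the Mittag-Leffler factor must be estimated uniformly in $\xi$. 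Here I would split the $s$-integral according to whether $t'-s\le h$ or $t'-s>h$: on the near region the two terms are treated separately by the single-kernel bound just obtained, while on the far region I would use the decay $E_\beta(-r)\asymp(1+r)^{-1}$ together with a mean-value bound for $r\mapsto E_\beta(-r)$ applied to $r=\nu(t-s)^\beta|\xi|^\alpha$ and $r'=\nu(t'-s)^\beta|\xi|^\alpha$, splitting the $\xi$-integral at the scale $|\xi|\sim(t-s)^{-\beta/\alpha}$ and balancing the contributions. The sub-diffusive scaling $x\sim t^{\beta/\alpha}$ played against the correlation singularity $|\xi|^{\gamma-d}$ is precisely what forces the exponent $h^{2\eta_0}$ and makes the temporal behaviour near $s=t'$ integrable under $\gamma<\alpha$; this balancing, relying on the heat-kernel increment estimates from the preliminary section, is the delicate step. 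Combining the two bounds gives the desired moment estimate, and the theorem follows.
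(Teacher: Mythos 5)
Your proposal is correct, and its skeleton coincides with the paper's own proof: the same three-way splitting of the increment (deterministic term, kernel change on the common time interval, fresh noise on the new interval), Burkholder's inequality plus uniform $p$-th moment bounds to reduce everything to the two covariance integrals you display, and Kolmogorov--Chentsov with $p\to\infty$. Where you genuinely depart from the paper is in how the delicate second integral is bounded. You propose a near/far splitting in $s$ combined with the decay $E_\beta(-r)\asymp(1+r)^{-1}$ and a mean-value estimate; this does work: the needed ingredient is the derivative bound $|\tfrac{\d}{\d r}E_\beta(-r)|\le C(1+r)^{-2}$, which follows from the asymptotic expansion of the Mittag-Leffler function, and then both the region $t'-s\le h$ (where one uses the single-kernel bound twice) and the region $t'-s>h$ (where the mean-value bound gives an integrand of order $h^2(t'-s)^{-2-\beta\gamma/\a}$) contribute $O\bigl(h^{1-\beta\gamma/\a}\bigr)$ after the $\xi$- and $s$-integrations. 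The paper instead proves this bound as a standalone estimate (Proposition \ref{prop:temporal-increment-bound}) by a softer argument: since $\hat{G}_t(\xi)=E_\beta(-\nu|\xi|^\a t^\beta)$ is positive and decreasing in $t$ (complete monotonicity of $E_\beta(-\cdot)$), one has pointwise $|\hat{G}_{t-s+h}(\xi)-\hat{G}_{t-s}(\xi)|^2\le \hat{G}_{t-s}(\xi)^2-\hat{G}_{t-s+h}(\xi)^2$; integrating in $\xi$ via Lemma \ref{Lem:Green1-gamma} turns each square into an explicit constant times a power of the time variable, and the $s$-integral then telescopes to $C h^{1-\beta\gamma/\a}/(1-\beta\gamma/\a)$, with no derivative asymptotics for $E_\beta$ at all. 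So the paper's route buys brevity, while yours is more computational but also more robust, since it never uses monotonicity of the Fourier transform in $t$. Two small points to tighten in your write-up: the temporal regularity of $(\mathcal{G}u_0)_t(x)$ is not actually ``established in the preliminary section'' (that section proves lower bounds on it); like the paper, you should argue smoothness in $t>0$ directly by differentiating under the integral using boundedness of $u_0$. And you should state and justify the derivative bound on $E_\beta$ and carry out the far-region balancing explicitly; with that done, your estimate $\E|u_t(x)-u_{t'}(x)|^p\le Ch^{p(1-\beta\gamma/\a)/2}$ is exactly what the paper feeds into Kolmogorov's theorem.
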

While the general strategy of proof is the same as that used in \cite{foodun-liu-omaba-2014}, here we  develop some new important tools.  For example, we need analyse the heat kernel and prove some relevant estimates. In \cite{foodun-liu-omaba-2014}, this step was relatively straightforward. But here the lack of semigroup property makes it that we need to work much harder. To address this, we heavily rely on subordination. This insight, absent in \cite{Chen-le-2014} makes it that we are able to vastly generalise the results of that paper.
Another key tool is showing that with time, $(\mathcal{G}u_0)_t(x)$ decays at most like the inverse of a polynomial. This also requires techniques based on subordination.

\section{Preliminaries.}
As mentioned in the introduction, the behaviour heat kernel $G_t(x)$ will play an important role. This section will mainly be devoted to estimates involving this quantity.  We start by giving a stochastic representation of this kernel.  Let $X_t$ denote a symmetric $\alpha$ stable process with density function denoted by $p(t,\,x)$. This is characterized through the Fourier transform which is given by

\begin{equation}\label{Eq:F_pX}
\widehat{p(t,\,\xi)}=e^{-t\nu|\xi|^\alpha}.
\end{equation}

Let $D=\{D_r,\,r\ge0\}$ denote a $\beta$-stable subordinator and $E_t$ be its first passage time. It is known that the density of the time changed process $X_{E_t}$ is given by the $G_t(x)$. By conditioning, we have

\begin{equation}\label{Eq:Green1}
G_t(x)=\int_{0}^\infty p(s,\,x) f_{E_t}(s)\d s,
\end{equation}
where
\begin{equation}\label{Etdens0}
f_{E_t}(x)=t\beta^{-1}x^{-1-1/\beta}g_\beta(tx^{-1/\beta}),
\end{equation}
where $g_\beta(\cdot)$ is the density function of $D_1.$ and is infinitely differentiable on the entire real line, with $g_\beta(u)=0$ for $u\le 0$. Moreover,
\begin{equation}\label{Eq:gbeta0}
g_\beta(u)\sim K(\beta/u)^{(1-\beta/2)/(1-\beta)}\exp\{-|1-\beta|(u/\beta)^{\beta/(\beta-1)}\}\quad\mbox{as}\,\, u\to0+,
\end{equation}
and
\begin{equation}\label{Eq:gbetainf}
g_\beta(u)\sim\frac{\beta}{\Gamma(1-\beta)}u^{-\beta-1} \quad\mbox{as}\,\, u\to\infty.
\end{equation}

While the above expressions will be very important, we will also need the Fourier transform of $G_t(x)$.

\begin{equation*}
\hat{G_t}(\xi)=E_\beta(-\nu|\xi|^\alpha t^{\beta}),
\end{equation*}
where
\begin{equation}\label{ML-function}
E_\beta(x) = \sum_{k=0}^\infty\frac{x^k}{\Gamma(1+\beta k)},
\end{equation}
and
\begin{equation}\label{uniformbound}
 \frac{1}{1 + \Gamma(1-\beta)x}\leq E_{\beta}(-x)\leq \frac{1}{1+\Gamma(1+\beta)^{-1}x} \ \  \ \text{for}\ x>0,
 \end{equation}
 see, for example, \cite[Theorem 4]{simon}.
Even though, we will be mainly using the representation given by \eqref{Eq:Green1}, we also have another explicit description of the heat kernel.

Using the convention $\sim$ to denote the Laplace transform and $\ast$ the Fourier transform we get
\begin{equation}
\tilde{G}^\ast_t(x) = \frac{\lambda^{\beta-1}}{\lambda^{\beta} +\nu |\xi|^\alpha}.
\end{equation}
Inverting the Laplace transform yields
\begin{equation}\label{fouriertransformofG}
G^\ast_t(\xi) = E_\beta(-\nu|\xi|^\alpha t^\beta),
\end{equation}
where
\begin{equation}\label{ML-function}
E_\beta(x) = \sum_{k=0}^\infty\frac{x^k}{\Gamma(1+\beta k)},
\end{equation}
 is the Mittag-Leffler function. In order to invert the Fourier transform, we will make use of the integral \cite[eq. 12.9]{haubold-mathai-saxena}
\begin{equation}
\int_0^\infty\cos(ks)E_{\beta,\a}(-as^\mu)ds = \frac{\pi}{k}H_{3,3}^{2,1}\bigg[\frac{k^\mu}{a}\bigg|^{(1,1), (\a,\beta), (1,\mu/2)}_{(1,\mu),(1,1),(1,\mu/2)}\bigg],\nonumber
\end{equation}
where $\mathcal{R}(\a)>0,\mathcal{\beta}>0,k>0,a>0, H_{p,q}^{m,n}$ is the H-function given in \cite[Definition 1.9.1, p. 55]{mathai} and the formula
\begin{equation}
\frac{1}{2\pi}\int_{-\infty}^\infty e^{-i\xi x}f(\xi)d\xi = \frac{1}{\pi}\int_0^\infty f(\xi)\cos(\xi x)d\xi.\nonumber
\end{equation}
Then this gives the function as
\begin{equation}\label{G-function}
G_t(x) = \frac{1}{|x|} H_{3,3}^{2,1}\bigg[\frac{|x|^\a}{\nu t^\beta}\bigg|^{(1,1), (1,\beta), (1,\a/2)}_{(1,\a),(1,1),(1,\a/2)}\bigg].
\end{equation}
Note that for $\a = 2$ using reduction formula for the H-function we have
\begin{equation}
G_t(x) = \frac{1}{|x|}H^{1,0}_{1,1}\bigg[\frac{|x|^2}{\nu t^\beta}\bigg|^{(1,\beta)}_{(1,2)}\bigg]
\end{equation}
Note  that for $\beta = 1$ it reduces to the Gaussian density
\begin{equation}
G_t(x) = \frac{1}{(4\nu\pi t)^{1/2}}\exp\left(-\frac{|x|^2}{4\nu t}\right).
\end{equation}

 We will need following properties of the heat kernel of stable process.
  \begin{itemize}
\item \begin{equation*}
p(t,\,x)=t^{-d/\alpha}p(1,\,t^{-1/\alpha}x).
\end{equation*}
\item \begin{equation*}
p(st,\,x)=s^{-d/\alpha}p(t,\,s^{-1/\alpha}x).
\end{equation*}
\item $p(t,\,x)\geq p(t,\,y)$whenever $|x|\leq |y|$.
 \item For $t$ large enough so that $p(t,\,0)\leq 1$ and $\tau\geq 2$, we have
 \begin{equation*}
p(t,\,\frac{1}{\tau}(x-y))\geq p(t,\,x)p(t,\,y).
\end{equation*}
\end{itemize}
All these properties, except the last one, are straightforward. They follow from scaling.  We therefore provide a quick proof of the last inequality.  Suppose that $t$ is large enough so that $p(t,\,0)\leq 1$. Now, we have that $\frac{|x-y|}{\tau}\leq \frac{2|x|}{\tau}\vee \frac{2|y|}{\tau}\leq |x|\vee|y|.$ Therefore by the monotonicity property of the heat kernel and the fact that time is large enough, we have
\begin{equation*}
\begin{aligned}
p(t,\,\frac{1}{\tau}(x-y))&\geq p(t,\,|x|\vee|y|)\\
&\geq p(t,\,|x|)\wedge p(t,\,|y|)\\
&\geq p(t,\,|x|)p(t,\,|y|).
\end{aligned}
\end{equation*}
\\

We will need the lower bound described in the following lemma. The upper bound is given for the sake of completeness and is true under the additional assumption that $\alpha>d$, a condition which we will not need in this paper.
\begin{lemma}\label{Lemma:G-bounds}
\begin{enumerate}
\item[(a)] There exists a positive constant $c_1$ such that for all $x\in \R^d$
\begin{equation*}
G_t(x)\geq c_1 \bigg(t^{-\beta d/\alpha}\wedge \frac{t^\beta}{|x|^{d+\alpha}}\bigg).
\end{equation*}
\item[(b)] If we further suppose that $\alpha>d$, then there exists a positive constant $c_2$ such that for all $x\in \R^d$
\begin{equation*}
G_t(x)\leq c_2 \bigg(t^{-\beta d/\alpha}\wedge \frac{t^\beta}{|x|^{d+\alpha}}\bigg).
\end{equation*}
\end{enumerate}
\end{lemma}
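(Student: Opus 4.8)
The plan is to work directly from the subordination identity \eqref{Eq:Green1}, $G_t(x)=\int_0^\infty p(s,x)f_{E_t}(s)\,\d s$, combined with the classical two-sided bound for the symmetric $\alpha$-stable density, $p(1,y)\asymp 1\wedge|y|^{-(d+\alpha)}$, and the scaling $p(s,x)=s^{-d/\alpha}p(1,s^{-1/\alpha}x)$. The first step is to collapse the $t$-dependence into a single rescaled integral. From \eqref{Etdens0} one checks the scaling $f_{E_t}(s)=t^{-\beta}f_{E_1}(st^{-\beta})$ (equivalently $E_t\stackrel{d}{=}t^\beta E_1$); substituting $s=t^\beta r$ gives
\begin{equation*}
G_t(x)=\int_0^\infty p(t^\beta r,x)\,f_{E_1}(r)\,\d r .
\end{equation*}
Applying scaling once more, $p(t^\beta r,x)=(t^\beta r)^{-d/\alpha}p(1,(t^\beta r)^{-1/\alpha}x)$, so the two global quantities $(t^\beta r)^{-d/\alpha}$ and $t^\beta r|x|^{-(d+\alpha)}$ are precisely the two terms in the statement once integrated against $f_{E_1}$.

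For the upper bound (b) I would avoid splitting and instead use two \emph{uniform} estimates on $p$. Since $p(1,\cdot)$ is maximised at the origin (the monotonicity property listed above), $p(s,x)\le c\,s^{-d/\alpha}$ for all $x$, whence $G_t(x)\le c\,t^{-\beta d/\alpha}\int_0^\infty r^{-d/\alpha}f_{E_1}(r)\,\d r=c\,t^{-\beta d/\alpha}\,\E[E_1^{-d/\alpha}]$. Separately, the tail bound $p(1,y)\le c|y|^{-(d+\alpha)}$ with scaling gives $p(s,x)\le c\,s|x|^{-(d+\alpha)}$, so $G_t(x)\le c\,t^\beta|x|^{-(d+\alpha)}\,\E[E_1]$. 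As both inequalities hold for every $x$, taking their minimum yields exactly the claimed bound, provided the two moments are finite.

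For the lower bound (a) I would localise. Write $\rho:=|x|^\alpha/t^\beta$. If $|x|\le t^{\beta/\alpha}$ then $\rho\le1$, and on $\{r\ge\rho\}$ one has $|(t^\beta r)^{-1/\alpha}x|\le1$, so $p(1,(t^\beta r)^{-1/\alpha}x)\ge c$ and $G_t(x)\ge c\,t^{-\beta d/\alpha}\int_\rho^\infty r^{-d/\alpha}f_{E_1}(r)\,\d r\ge c\,t^{-\beta d/\alpha}\int_1^\infty r^{-d/\alpha}f_{E_1}(r)\,\d r$, the last integral being a strictly positive constant; since $t^{-\beta d/\alpha}$ is the minimum in this regime, this is what is wanted. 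If instead $|x|\ge t^{\beta/\alpha}$ then $\rho\ge1$, and on $\{r\le\rho\}$ one has $|(t^\beta r)^{-1/\alpha}x|\ge1$, so $p(1,\cdot)\ge c|\cdot|^{-(d+\alpha)}$ gives $G_t(x)\ge c\,t^\beta|x|^{-(d+\alpha)}\int_0^\rho r f_{E_1}(r)\,\d r\ge c\,t^\beta|x|^{-(d+\alpha)}\int_0^1 r f_{E_1}(r)\,\d r$, again a positive constant times the minimum. Note that this argument only ever integrates $f_{E_1}$ against $r^{-d/\alpha}$ on $[1,\infty)$ and against $r$ on $[0,1]$, so no moment can diverge and the lower bound holds for all admissible $d$.

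The one point needing real care — and the reason (b) carries the extra hypothesis $\alpha>d$ — is the finiteness of $\E[E_1^{-d/\alpha}]=\int_0^\infty r^{-d/\alpha}f_{E_1}(r)\,\d r$. From \eqref{Etdens0} and the large-argument asymptotics \eqref{Eq:gbetainf} of $g_\beta$ one reads off $f_{E_1}(r)\to 1/\Gamma(1-\beta)$ as $r\to0^+$, so the integrand behaves like a constant multiple of $r^{-d/\alpha}$ near the origin and the integral converges exactly when $d<\alpha$; this is sharp, since $G_t(0)=c\,\E[E_t^{-d/\alpha}]=\infty$ once $d\ge\alpha$. The finiteness of $\E[E_1]$ used above is harmless: the small-argument asymptotics \eqref{Eq:gbeta0} force $f_{E_1}$ to decay stretched-exponentially as $r\to\infty$, so all positive moments are finite (indeed $\E[E_1^p]=\Gamma(1+p)/\Gamma(1+\beta p)$). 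The two-sided estimate $p(1,y)\asymp 1\wedge|y|^{-(d+\alpha)}$ is classical and I would simply cite it.
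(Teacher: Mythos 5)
Your proposal is correct and follows essentially the same route as the paper's proof: both rest on the subordination identity \eqref{Eq:Green1}, the classical two-sided bound $p(t,x)\asymp t^{-d/\alpha}\wedge t\,|x|^{-(d+\alpha)}$, a split of the integration range at the threshold $|x|^{\alpha}$ versus $t^{\beta}$ for the lower bound, and two uniform bounds whose minimum gives part (b), with the hypothesis $\alpha>d$ entering through exactly the same moment-finiteness condition. The only difference is a cosmetic reparametrization: you integrate against the density of $E_1$ via the self-similarity $E_t\stackrel{d}{=}t^{\beta}E_1$, while the paper changes variables to integrate against $g_\beta$, the density of $D_1$; under the substitution $u=r^{-1/\beta}$ these are the same integrals, and your condition $\E[E_1^{-d/\alpha}]<\infty$ is identical to the paper's requirement that $\int_0^\infty u^{\beta d/\alpha}g_\beta(u)\,\d u<\infty$.
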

\begin{proof}
It is well known that the transition density $p(t,\,x)$ of any strictly stable process is given by
 \begin{equation}
 c_1\bigg(t^{-d/\alpha}\wedge \frac{t}{|x|^{d+\alpha}}\bigg)\leq p(t,\,x)\leq c_2\bigg(t^{-d/\alpha}\wedge \frac{t}{|x|^{d+\alpha}}\bigg),
 \end{equation}
 where $c_1$ and $c_2$ are positive constants. We have
\begin{equation*}
G_t(x)=\int_{0}^\infty p(s,\,x) f_{E_t}(s)\d s,
\end{equation*}
which after using \eqref{Etdens0} and an appropriate substitution gives the following
\begin{equation*}
G_t(x)=\int_{0}^\infty p((t/u)^\beta,\,x) g_\beta(u)\d u.
\end{equation*}

Suppose that $|x|\leq t^{\beta/\alpha}$ then  $t/|x|^{\alpha/\beta}\geq 1$. When we have $u\leq t/|x|^{\alpha/\beta}$, we can write
\begin{equation}
\begin{aligned}
\int_0^\infty p((t/u)^\beta,\,x)g_\beta(u)\d u&\geq c_5\int_0^{t/|x|^{\alpha/\beta}} (t/u)^{-\beta d/\alpha}g_\beta(u)\d u \\
&\geq c_6\int_0^{1} (t/u)^{-\beta d/\alpha}g_\beta(u)\d u\\
&=c_7t^{-\beta d/\alpha} \int_0^{1} u^{\beta d/\alpha}g_\beta(u)du.\end{aligned}
\end{equation}
Since the integral appearing in the right hand side of the above display is finite, we have $G_t(x)\geq c_8t^{-\beta d/\alpha}$ whenever $|x|\leq t^{\beta/\alpha}$. We now look at the case $|x|\geq t^{\beta/\alpha}$.
\begin{equation}
\begin{aligned}
\int_0^\infty p((t/u)^\beta,\,x)g_\beta(u)\d u&\geq \int_{t/|x|^{\alpha/\beta}}^\infty c_9\frac{(t/u)^{\beta}}{|x|^{d+\alpha}}g_\beta(u)\d u\\
&\geq c_{10}\frac{t^\beta}{|x|^{d+\alpha}}\int_1^{\infty} (u)^{-\beta}g_\beta(u)\d u\\
& \geq \frac{c_{11}t^\beta}{|x|^{d+\alpha}},
\end{aligned}
\end{equation}
where we have used the fact that $\int_1^{\infty} (u)^{-\beta}g_\beta(u)du$ is a positive finite constant to come up with the last line.

We now use the fact that $p((t/u)^\beta,\,x)\leq c_1 \frac{u^{\beta d/\alpha}}{t^{\beta d/\alpha}}$, we have
\begin{equation*}
\begin{aligned}
G_t(x)&\leq c_1 \int_{0}^\infty \frac{u^{\beta d/\alpha}}{t^{\beta d/\alpha}} g_\beta(u)\d u\\
&=\frac{c_1}{t^{\beta d/\alpha}}\int_{0}^\infty u^{\beta d/\alpha} g_\beta(u)\d u.
\end{aligned}
\end{equation*}
The inequality on the right hand side is bounded only if $\alpha>d$. This follows from the fact that for large $u$, $g_\beta(u)$ behaves like $u^{-\beta-1}$. So we have $G_t(x)\leq \frac{c_2}{t^{\beta d/\alpha}}$. Similarly, we can use $p((t/u)^\beta,\,x)\leq \frac{c_3 t^{\beta}}{u^\beta|x|^{d+\alpha}}$, to write
\begin{equation*}
\begin{aligned}
G_t(x)&\leq c_3 \int_{0}^\infty \frac{t^{\beta}}{u^\beta|x|^{d+\alpha}}g_\beta(u)\d u\\
&=\frac{c_3t^{\beta}}{|x|^{d+\alpha}}\int_{0}^\infty u^{-\beta} g_\beta(u)\d u.
\end{aligned}
\end{equation*}
Since the integral appearing in the above display is finite, we have $G_t(x)\leq \frac{c_4t^{\beta}}{|x|^{d+\alpha}}$.  We therefore have
\begin{equation*}
G_t(x)\leq c_5\bigg(t^{-d\beta/\alpha}\wedge \frac{t^\beta}{|x|^{d+\alpha}}\bigg).
\end{equation*}
\end{proof}
\begin{remark}
When $\alpha\leq d$, then the function $G_t(x)$ is not well defined everywhere. But its representation in terms of H functions, one can show that $x=0$ is the only point where it is undefined. We won't use the pointwise upper bound. The lower bound is trivially true when $x=0$.
\end{remark}
The $L^2$-norm of the heat kernel can be calculated as follows.
\begin{lemma}\label{Lem:Green1} Suppose that $d < 2\alpha$, then
\begin{equation}\label{Eq:Greenint}
\int_{{\R^d}}G^2_t(x)\d x  =C^\ast t^{-\beta d/\alpha},
\end{equation}
where the constant $C^{\ast}$ is given by
\begin{equation*}
C^\ast = \frac{(\nu )^{-d/\alpha}2\pi^{d/2}}{\alpha\Gamma(\frac d2)}\frac{1}{(2\pi)^d}\int_0^\infty z^{d/\alpha-1} (E_\beta(-z))^2 \d z.
\end{equation*}
\end{lemma}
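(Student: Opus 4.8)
The plan is to compute the $L^2$-norm on the Fourier side via Plancherel's theorem, using the explicit transform $\hat{G_t}(\xi)=E_\beta(-\nu|\xi|^\alpha t^\beta)$ recorded in \eqref{fouriertransformofG}. In the normalisation used here (fixed by $\widehat{p(t,\,\xi)}=e^{-t\nu|\xi|^\alpha}$, and consistent with the factor $(2\pi)^{-d}$ appearing in $C^\ast$), Plancherel reads $\int_{\R^d}G_t^2(x)\,\d x=(2\pi)^{-d}\int_{\R^d}|\hat{G_t}(\xi)|^2\,\d\xi$. So the first step is to write
\begin{equation*}
\int_{\R^d}G_t^2(x)\,\d x=\frac{1}{(2\pi)^d}\int_{\R^d}\big(E_\beta(-\nu|\xi|^\alpha t^\beta)\big)^2\,\d\xi.
\end{equation*}
Since the integrand is radial, I would pass to spherical coordinates, which produces the surface measure of the unit sphere $\frac{2\pi^{d/2}}{\Gamma(d/2)}$ and reduces the task to the one-dimensional integral $\int_0^\infty\big(E_\beta(-\nu r^\alpha t^\beta)\big)^2 r^{d-1}\,\d r$.

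Next I would substitute $z=\nu r^\alpha t^\beta$ to scale out the entire $t$-dependence. A routine change of variables converts the radial integral into $(\nu t^\beta)^{-d/\alpha}\,\alpha^{-1}\int_0^\infty \big(E_\beta(-z)\big)^2 z^{d/\alpha-1}\,\d z$, so that the announced factor $t^{-\beta d/\alpha}$ and the explicit constant $C^\ast$ fall out immediately upon reinserting the surface-area factor and the $(2\pi)^{-d}$ from Plancherel.

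The only substantive point — and the step that genuinely uses the hypothesis $d<2\alpha$ — is to verify that the remaining integral $\int_0^\infty z^{d/\alpha-1}\big(E_\beta(-z)\big)^2\,\d z$ converges, so that $C^\ast<\infty$ and the manipulations above are legitimate. Near $z=0$ one has $E_\beta(0)=1$, so the integrand is comparable to $z^{d/\alpha-1}$, which is integrable since $d/\alpha>0$. At infinity I would invoke the upper bound in \eqref{uniformbound}, namely $E_\beta(-z)\le (1+\Gamma(1+\beta)^{-1}z)^{-1}$, which decays like $z^{-1}$ for large $z$; hence $\big(E_\beta(-z)\big)^2 z^{d/\alpha-1}$ behaves like $z^{d/\alpha-3}$, and this is integrable at infinity precisely when $d/\alpha-3<-1$, i.e. when $d<2\alpha$. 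This convergence-at-infinity estimate is the main (and essentially the only) obstacle, and it is exactly matched by the standing assumption of the lemma; everything else is scaling and a change of variables.
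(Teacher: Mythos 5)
Your proposal is correct and follows essentially the same route as the paper's own proof: Plancherel's theorem with the Fourier transform $\hat{G_t}(\xi)=E_\beta(-\nu|\xi|^\alpha t^\beta)$, passage to polar coordinates, the substitution $z=\nu r^\alpha t^\beta$ to extract $t^{-\beta d/\alpha}$, and the bound \eqref{uniformbound} to control $\int_0^\infty z^{d/\alpha-1}(E_\beta(-z))^2\,\d z$. Your convergence analysis (splitting the behaviour at $z=0$ and $z=\infty$) is in fact slightly more explicit than the paper's, which simply invokes the two-sided bound and states the conclusion.
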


\begin{proof}
Using Plancherel theorem and \eqref{fouriertransformofG}, we have
\begin{eqnarray}
\int_{\R^d}|G_t(x)|^2 \d x &=& \frac{1}{(2\pi)^d}\int_{\R^d}|\hat{G_t}(\xi)|^2 \d \xi= \frac{1}{(2\pi)^d}\int_{\R^d}|E_\beta(-\nu|\xi|^\alpha t^\beta)|^2 \d \xi\nonumber\\
&=&\frac{2\pi^{d/2}}{\Gamma(\frac d2)}\frac{1}{(2\pi)^d}\int_0^\infty r^{d-1} (E_\beta(-\nu r^\alpha t^\beta))^2 \d r.\label{square-kernel}\\
&=&\frac{(\nu t^\beta)^{-d/\alpha}2\pi^{d/2}}{\alpha\Gamma(\frac d2)}\frac{1}{(2\pi)^d}\int_0^\infty z^{d/\alpha-1} (E_\beta(-z))^2 \d z.
\end{eqnarray}
To finish the proof, we need to show that the integral on the right hand side of the above display is bounded. We use equation \eqref{uniformbound} to get
\begin{eqnarray}\label{uniformboundforE}
\int_{0}^\infty\frac{z^{d/\alpha-1} }{(1+\Gamma(1-\beta)z)^2} \d r&\leq&\int_0^\infty z^{d/\alpha-1} (E_\beta(-z))^2 \d z\nonumber\\
&\leq&\int_{0}^\infty\frac{z^{d/\alpha-1} }{(1+\Gamma(1+\beta)^{-1}z)^2}\d z.
\end{eqnarray}
Hence  $\int_0^\infty z^{d/\alpha-1} (E_\beta(-z))^2 \d z<\infty$ if and only if $d<2\alpha$.
\end{proof}
Recall the Fourier transform of the heat kernel
\begin{equation}
G^\ast_t(\xi) = E_\beta(-\nu|\xi|^\alpha t^\beta).
\end{equation}
We will use this to prove the following.
\begin{lemma}\label{Lem:Green1-gamma} For $\gamma < 2\a,$
\begin{equation}\label{Eq:Greenint-gamma}
\int_{{\R^d}}[\hat{G_t}(\xi)]^2\frac{1}{|\xi|^{d-\gamma}} \d \xi  =C^\ast_1 t^{-\beta\gamma/\a},
\end{equation}
where $C^\ast _1= \frac{(\nu )^{-\gamma/\a}2\pi^{d/2}}{\a\Gamma(\frac d2)}\frac{1}{(2\pi)^d}\int_0^\infty z^{\gamma/\a-1} (E_\beta(-z))^2 \d z.$
\end{lemma}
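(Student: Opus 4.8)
The plan is to mirror the computation carried out in the proof of Lemma \ref{Lem:Green1}, the only difference being the extra weight $|\xi|^{-(d-\gamma)}$ in the integrand. I would start from the Fourier transform \eqref{fouriertransformofG}, so that the left-hand side of \eqref{Eq:Greenint-gamma} becomes $\int_{\R^d}[E_\beta(-\nu|\xi|^\alpha t^\beta)]^2|\xi|^{-(d-\gamma)}\,\d\xi$. Since the integrand is radial, I pass to polar coordinates, picking up the surface area $2\pi^{d/2}/\Gamma(\tfrac d2)$ of the unit sphere in $\R^d$; crucially the spatial weight combines with the Jacobian as $r^{d-1}\cdot r^{-(d-\gamma)}=r^{\gamma-1}$, leaving the one-dimensional integral $\frac{2\pi^{d/2}}{\Gamma(d/2)}\int_0^\infty[E_\beta(-\nu r^\alpha t^\beta)]^2 r^{\gamma-1}\,\d r$.

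Next I would perform the substitution $z=\nu r^\alpha t^\beta$, exactly as in \eqref{square-kernel}. A short computation gives $r^{\gamma-1}\,\d r=\frac{1}{\alpha}(\nu t^\beta)^{-\gamma/\alpha}z^{\gamma/\alpha-1}\,\d z$, which pulls out the claimed time factor $t^{-\beta\gamma/\alpha}$ together with the constant $\frac{\nu^{-\gamma/\alpha}2\pi^{d/2}}{\alpha\Gamma(d/2)}$ and reduces the problem to the dimensionless integral $\int_0^\infty z^{\gamma/\alpha-1}(E_\beta(-z))^2\,\d z$. The factor $(2\pi)^{-d}$ recorded in $C^\ast_1$ is carried along as a normalisation consistent with the convention used in Lemma \ref{Lem:Green1}.

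It remains to verify that this last integral is finite, and this is the only step that requires care, being the point where the hypothesis $\gamma<2\alpha$ enters. As in \eqref{uniformboundforE}, I would use the two-sided bound \eqref{uniformbound} on the Mittag-Leffler function to sandwich the integrand. Near $z=0$ we have $E_\beta(-z)\to 1$, so the integrand behaves like $z^{\gamma/\alpha-1}$, which is integrable provided $\gamma>0$; near $z=\infty$ the upper bound in \eqref{uniformbound} gives $E_\beta(-z)\le C/z$, so $(E_\beta(-z))^2 z^{\gamma/\alpha-1}$ is dominated by $z^{\gamma/\alpha-3}$, which is integrable at infinity precisely when $\gamma/\alpha<2$. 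Thus the integral converges exactly when $\gamma<2\alpha$, which is the stated restriction and the main obstacle; the identity \eqref{Eq:Greenint-gamma} then follows.
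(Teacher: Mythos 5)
Your proposal is correct and takes essentially the same route as the paper's own proof: write $\hat{G}_t(\xi)=E_\beta(-\nu|\xi|^\alpha t^\beta)$, pass to polar coordinates so the weight combines to $r^{\gamma-1}$, substitute $z=\nu r^\alpha t^\beta$ to extract $t^{-\beta\gamma/\alpha}$, and check finiteness of $\int_0^\infty z^{\gamma/\alpha-1}(E_\beta(-z))^2\,\d z$ via the two-sided bound \eqref{uniformbound} (the paper sandwiches the integral by explicit Beta-function integrals, you analyse the behaviour at $0$ and $\infty$ — the same estimate in substance). As for the factor $(2\pi)^{-d}$ in $C^\ast_1$: the paper inserts it silently as well (it appears to be carried over from the Plancherel step of Lemma \ref{Lem:Green1} and does not arise from this computation), so your treatment of it is no less rigorous than the original.
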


\begin{proof}
We have

\begin{eqnarray}
\int_{{\R^d}}[G^\ast_t(\xi)]^2\frac{1}{|\xi|^{d-\gamma}} \d \xi &= &\int_{\rd}|E_\beta(-\nu|\xi|^\a t^\beta)|^2 \frac{1}{|\xi|^{d-\gamma}} \d \xi\nonumber\\
&=&\frac{2\pi^{d/2}}{\Gamma(\frac d2)}\int_0^\infty r^{d-1} (E_\beta(-\nu r^\a t^\beta))^2 \frac{1}{r^{d-\gamma}}\d r.\label{square-kernel-gamma}\\
&=&\frac{(\nu t^\beta)^{-\gamma/\a}2\pi^{d/2}}{\a\Gamma(\frac d2)}\frac{1}{(2\pi)^d}\int_0^\infty z^{\gamma/\a-1} (E_\beta(-z))^2 \d z.\nonumber
\end{eqnarray}
We used the integration in  polar coordinates for radially symmetric function in  the last equation above. Now using equation \eqref{uniformbound} we get
\begin{eqnarray}\label{uniformboundforE-gamma}
\int_{0}^\infty\frac{z^{\gamma/\a-1} }{(1+\Gamma(1-\beta)z)^2} \d r&\leq&\int_0^\infty z^{\gamma/\a-1} (E_\beta(-z))^2 \d z\nonumber\\
&\leq&\int_{0}^\infty\frac{z^{\gamma/\a-1} }{(1+\Gamma(1+\beta)^{-1}z)^2}\d z.
\end{eqnarray}
Hence  $\int_0^\infty z^{\gamma/\a-1} (E_\beta(-z))^2 \d z<\infty$ if and only if $\gamma<2\a$. In this case
 the upper bound in equation \eqref{uniformboundforE-gamma} is
$$\int_0^\infty \frac{z^{\gamma/\a-1} }{(1+\Gamma(1+\beta)^{-1}z)^2} \d z = \frac{\text{B}(\gamma/\a, 2-\gamma/\a)}{\Gamma(1+\beta)^{-\gamma/\a}},$$
where $\text{B}(\gamma/\a, 2-\gamma/\a)$ is a Beta function.
\end{proof}

\begin{remark}For $\gamma<2\a$,
\begin{equation}\label{Eq:Greenint2}
\frac{\text{B}(\gamma/\a, 2-\gamma/\a)}{\Gamma(1-\beta)^{\gamma/\a}} \leq\int_{0}^\infty z^{\gamma/\a -1}(E_\beta(-z))^2\d z \leq \frac{\text{B}(\gamma/\a, 2-\gamma/\a)}{\Gamma(1+\beta)^{-\gamma/\a}}.\nonumber
\end{equation}
\end{remark}

We have the following estimate which will be useful for establishing temporal continuity property of the solution of \eqref{tfspde-colored}.
\begin{proposition}\label{prop:temporal-increment-bound}
Let $\gamma<\min \{2, \beta^{-1}\}\alpha$ and $h\in (0,1)$, we then have
$$
\int_0^t\int_{\rd}|\hat{G}_{t-s+h}(\xi)-\hat{G}_{t-s}(\xi)|^2\frac{1}{|\xi|^{d-\gamma}}\ \d \xi \d s\leq c_1 h^{1-\beta\gamma/\a}.
$$

\end{proposition}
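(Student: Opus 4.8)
The plan is to circumvent the absence of a semigroup property by passing to the subordination representation of $G_t$, which trades the Mittag--Leffler increment for an increment of the ordinary stable semigroup $e^{-\nu|\xi|^\alpha\tau}$, whose weighted $L^2$-norm is completely explicit. First I substitute $r=t-s$; since the integrand is nonnegative I enlarge the range of $r$ to $[0,\infty)$, which already makes the final constant independent of $t$. Combining the Fourier transform of the stable density \eqref{Eq:F_pX} with the subordination identity $G_t(x)=\int_0^\infty p((t/u)^\beta,x)\,g_\beta(u)\,\d u$ used in the proof of Lemma \ref{Lemma:G-bounds}, I obtain $\hat G_t(\xi)=\int_0^\infty e^{-\nu|\xi|^\alpha t^\beta u^{-\beta}}g_\beta(u)\,\d u$. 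Writing $\tau_1(u)=(r+h)^\beta u^{-\beta}$, $\tau_2(u)=r^\beta u^{-\beta}$ and $\|F\|^2:=\int_\rd|F(\xi)|^2|\xi|^{\gamma-d}\,\d\xi$, Minkowski's integral inequality gives
$$\big\|\hat G_{r+h}-\hat G_r\big\|\le\int_0^\infty\big\|e^{-\nu|\cdot|^\alpha\tau_1(u)}-e^{-\nu|\cdot|^\alpha\tau_2(u)}\big\|\,g_\beta(u)\,\d u.$$

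The inner norm can be computed exactly: polar coordinates and the substitution $w=\nu\rho^\alpha Q$ yield $\int_\rd e^{-\nu|\xi|^\alpha Q}|\xi|^{\gamma-d}\,\d\xi=c\,Q^{-\gamma/\alpha}$ (finite because $\gamma>0$), so expanding the square produces, with $\eta:=\gamma/\alpha$,
$$\big\|e^{-\nu|\cdot|^\alpha\tau_1}-e^{-\nu|\cdot|^\alpha\tau_2}\big\|^2=c\big[(2\tau_1)^{-\eta}-2(\tau_1+\tau_2)^{-\eta}+(2\tau_2)^{-\eta}\big].$$
The bracket equals the Jensen gap $g(\tau_1)+g(\tau_2)-2g(\tfrac{\tau_1+\tau_2}{2})$ of the convex function $g(\tau)=(2\tau)^{-\eta}$; a second-order Taylor expansion bounds it by $c\,\tau_2^{-\eta-2}(\tau_1-\tau_2)^2$, while monotonicity of $g$ gives the cruder bound $c\,\tau_2^{-\eta}$. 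Since the ratio $(\tau_1-\tau_2)/\tau_2=(1+h/r)^\beta-1=:\theta(r)$ does not depend on $u$, the two estimates combine to $\|e^{-\nu|\cdot|^\alpha\tau_1}-e^{-\nu|\cdot|^\alpha\tau_2}\|\le c\,u^{\beta\eta/2}r^{-\beta\eta/2}\min(1,\theta(r))$.

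Inserting this into the Minkowski bound, the variable $u$ separates and contributes $\int_0^\infty u^{\beta\eta/2}g_\beta(u)\,\d u$, which is finite exactly when $\eta<2$: by the tail \eqref{Eq:gbetainf}, $u^{\beta\eta/2}g_\beta(u)\sim c\,u^{\beta\eta/2-\beta-1}$ is integrable at infinity iff $\beta\eta/2<\beta$ (integrability near $0$ is automatic since $g_\beta$ decays faster than any power there). Hence $\|\hat G_{r+h}-\hat G_r\|^2\le c\,r^{-\beta\eta}\min(1,\theta(r)^2)$, and it remains only to integrate in $r$. Splitting at $r\asymp h$ and using $\theta(r)=(1+h/r)^\beta-1\le\beta h/r$ on the far range, one finds $\int_0^{ch}r^{-\beta\eta}\,\d r\le c\,h^{1-\beta\eta}$ (convergent at $0$ because $\beta\eta=\beta\gamma/\alpha<1$) and $\int_{ch}^\infty r^{-\beta\eta}(h/r)^2\,\d r\le c\,h^{1-\beta\eta}$, so the whole expression is $\le c_1 h^{1-\beta\gamma/\alpha}$.

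The genuine obstacle is the first, conceptual step rather than the estimates: lacking the semigroup property one cannot estimate $\hat G_{r+h}-\hat G_r$ by a single evaluation, and it is precisely the subordination representation that makes the increment tractable by reducing it to the explicit stable kernel. Everything afterwards is bookkeeping, and it is reassuring that the two standing hypotheses enter transparently, $\gamma<2\alpha$ securing convergence of the $u$-integral and $\gamma<\beta^{-1}\alpha$ securing convergence of the $r$-integral near $0$, which is exactly why one assumes $\gamma<\min\{2,\beta^{-1}\}\alpha$.
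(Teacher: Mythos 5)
Your proof is correct, but it follows a genuinely different route from the paper's. The paper stays entirely on the Fourier--Mittag-Leffler side: it expands the square, evaluates the two diagonal terms exactly via Lemma \ref{Lem:Green1-gamma}, and bounds the cross term from below by $2C^\ast_1(t-s+h)^{-\beta\gamma/\a}$ using that $z\mapsto E_\beta(-z)$ is decreasing (being completely monotone), so that pointwise in $s$
\begin{equation*}
\int_{\rd}|\hat G_{t-s+h}(\xi)-\hat G_{t-s}(\xi)|^2\frac{\d\xi}{|\xi|^{d-\gamma}}\le C^\ast_1\left[(t-s)^{-\beta\gamma/\a}-(t-s+h)^{-\beta\gamma/\a}\right],
\end{equation*}
and the $s$-integral of the right-hand side telescopes to $C^\ast_1h^{1-\beta\gamma/\a}/(1-\beta\gamma/\a)$; no splitting or Taylor expansion is needed. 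You instead dissolve the Mittag--Leffler function through subordination and estimate increments of the stable exponentials directly: Minkowski's integral inequality, an exact computation of the weighted norm, a Jensen-gap/second-order Taylor bound, and a split of the time integral at $r\asymp h$. Both arguments use the two hypotheses in identical roles: $\gamma<2\a$ gives finiteness of $C^\ast_1$ in the paper versus finiteness of $\int_0^\infty u^{\beta\gamma/(2\a)}g_\beta(u)\,\d u$ for you, and $\gamma<\a/\beta$ gives integrability of $r^{-\beta\gamma/\a}$ near $r=0$ in both. What the paper's monotonicity trick buys is brevity, with the exponent $1-\beta\gamma/\a$ emerging from a single telescoping integral. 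What your route buys is robustness and a sharper intermediate estimate: it never invokes the Mittag--Leffler bounds \eqref{uniformbound} or Lemma \ref{Lem:Green1-gamma}, it would apply verbatim to any kernel subordinated to the stable semigroup, and it yields the pointwise modulus $\int_{\rd}|\hat G_{r+h}(\xi)-\hat G_{r}(\xi)|^2|\xi|^{\gamma-d}\,\d\xi\le c\,r^{-\beta\gamma/\a}\min\{1,(h/r)^2\}$ as a byproduct, which is finer than the integrated statement and reusable elsewhere.
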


\begin{proof}
The computation in Lemma \ref{Lem:Green1-gamma} we have
\begin{eqnarray}
&&\int_{\R^d}|\hat{G}_{t-s+h}(\xi)- \hat{G}_{t-s}(\xi)|^2\frac{1}{|\xi|^{d-\gamma}} \d\xi\nonumber\\
 &=&\int_{\R^d} (\hat{G}_{t-s+h}(\xi))^2 \frac{1}{|\xi|^{d-\gamma}}\d\xi + \int_{\R^d} (\hat{G}_{t-s}(\xi))^2 \frac{1}{|\xi|^{d-\gamma}}\d\xi \nonumber\\
 &-&2\int_{\R^d} \hat{G}_{t-s+h}(\xi)\hat{G}_{t-s}(\xi) \frac{1}{|\xi|^{d-\gamma}}\d\xi \nonumber\\
 &=& C^\ast_1 (t- s+h)^{-\beta \gamma/\a} + C^\ast_1 (t-s)^{-\beta \gamma/\a} -2\int_{\R^d} \hat{G}_{t-s+h}(\xi)\hat{G}_{t-s}(\xi) \frac{1}{|\xi|^{d-\gamma}}\d\xi.\nonumber
\end{eqnarray}

Using  integration in polar coordinates in $\rd$, and the fact that $h(z)=E_\beta(-z)$ is decreasing (since it is completely monotonic, i.e. $(-1)^nh^{(n)}(z)\geq 0$ for all $z>0$, $n=0,1,2,3,\cdots$), we get
\begin{eqnarray}
 &&2\int_{\R^d} \hat{G}_{t-s+h}(\xi)\hat{G}_{t-s}(\xi) \frac{1}{|\xi|^{d-\gamma}}\d\xi \nonumber\\
 &=&2\int_{\R^d}E_\beta(-\nu|\xi|^\alpha (t-s+h)^\beta) E_\beta(-\nu|\xi|^\alpha (t-s)^\beta) \frac{1}{|\xi|^{d-\gamma}}\d\xi \nonumber\\
 &\geq &2\int_{\R^d}E_\beta(-\nu|\xi|^\alpha (t-s+h)^\beta) E_\beta(-\nu|\xi|^\alpha (t-s+h)^\beta) \frac{1}{|\xi|^{d-\gamma}}\d\xi \nonumber\\
 &=& 2C^*_1(t-s+h)^{-\beta \gamma/\a}.\nonumber
\end{eqnarray}

Now integrating both sides wrt to $s$ from $0$ to $t$ we get
\begin{eqnarray}\label{asmtotitc}
&&\int_0^t\int_{\R^d}|\hat{G}_{t-s+h}(\xi)- \hat{G}_{t-s}(\xi)|^2\frac{1}{|\xi|^{d-\gamma}} \d\xi \d r\\
&\leq &\frac{-C^\ast_1 (h)^{1-\beta \gamma/\a}}{1-\beta \gamma/\a} + \frac{C^\ast_11 (t+h)^{1-\beta \gamma/\a}}{1-\beta \gamma/\a} + \frac{C^\ast_1 t^{1-\beta \gamma/\a}}{1-\beta \gamma/\a}\nonumber\\
&+&\frac{ 2C^\ast_1 (h)^{1-\beta \gamma/\a}}{1-\beta \gamma/\a}-\frac{ 2C^\ast_1(t+h)^{1-\beta d/\a}}{1-\beta d/\a}\nonumber\\
&=&\frac{C^\ast_1(h)^{1-\beta \gamma/\a}}{1-\beta \gamma/\a}-\frac{C^\ast_1 (t+h)^{1-\beta \gamma/\a}}{1-\beta \gamma/\a}+\frac{C^\ast_1 t^{1-\beta \gamma/\a}}{1-\beta \gamma/\a}\nonumber\\
&\leq &\frac{C^\ast_1(h)^{1-\beta \gamma/\a}}{1-\beta \gamma/\a},\nonumber
\end{eqnarray}
the last inequality follows since $t<t'$.\\
\end{proof}

 \begin{lemma}\label{lemma:covariance-upper-bound}
Suppose that $\gamma<\alpha$, then there exists a constant $c_1$ such that for all $x,\,y\in \R^d$, we have
\begin{equation*}
\int_{\R^d}\int_{\R^d}G_t(x-w)G_t(y-z)f(z,\,w)\d w \d z\leq \frac{c_1}{t^{\gamma\beta/\alpha}}.
\end{equation*}
 \end{lemma}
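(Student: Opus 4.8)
The plan is to pass to Fourier variables and reduce the double integral to the spectral integral already evaluated in Lemma~\ref{Lem:Green1-gamma}. Write $f(z,w)=|z-w|^{-\gamma}$ and set $g_1(w):=G_t(x-w)$ and $g_2(z):=G_t(y-z)$. Then the left-hand side is $\int_{\rd} g_1(w)\,(f\ast g_2)(w)\,\d w$, i.e. the pairing $\langle g_1,\ f\ast g_2\rangle$, which is naturally handled by Parseval's identity.

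First I would record the Fourier transforms of the three ingredients. Since $G_t$ is real and radially symmetric, $\hat G_t$ is real and one checks by the substitution $u=x-w$ that $\widehat{g_1}(\xi)=e^{-i\xi\cdot x}\hat G_t(\xi)$ and $\widehat{g_2}(\xi)=e^{-i\xi\cdot y}\hat G_t(\xi)$, with $\hat G_t(\xi)=E_\beta(-\nu|\xi|^\alpha t^\beta)$. The key analytic input is the Fourier transform of the Riesz kernel: for $0<\gamma<d$ there is a positive constant $c_{d,\gamma}$ with $\widehat{|\cdot|^{-\gamma}}(\xi)=c_{d,\gamma}|\xi|^{-(d-\gamma)}$, understood in the sense of tempered distributions. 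The hypothesis $\gamma<\alpha\wedge d$ guarantees $\gamma<d$, so this identity applies.

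Applying Parseval then gives
\[
\int_{\rd}\int_{\rd}G_t(x-w)G_t(y-z)|z-w|^{-\gamma}\,\d w\,\d z
=\frac{c_{d,\gamma}}{(2\pi)^d}\int_{\rd}e^{-i\xi\cdot(x-y)}\,[\hat G_t(\xi)]^2\,\frac{1}{|\xi|^{d-\gamma}}\,\d\xi.
\]
Bounding the oscillatory factor by $|e^{-i\xi\cdot(x-y)}|\le 1$ removes all dependence on $x$ and $y$ and leaves exactly the integral of Lemma~\ref{Lem:Green1-gamma}. Since $\gamma<\alpha\le 2\alpha$, that lemma applies and yields $\int_{\rd}[\hat G_t(\xi)]^2|\xi|^{-(d-\gamma)}\,\d\xi=C^\ast_1 t^{-\beta\gamma/\alpha}$, giving the claimed bound $c_1 t^{-\gamma\beta/\alpha}$ uniformly in $x,y$.

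The main obstacle is making the Parseval step rigorous, because the Riesz kernel is neither integrable nor square-integrable, so the identity $\widehat{|\cdot|^{-\gamma}}=c_{d,\gamma}|\cdot|^{-(d-\gamma)}$ holds only distributionally. To be careful I would first verify that the double integral converges absolutely (so that the manipulations are legitimate) and then justify the spectral representation either via the standard Parseval formula for the spatial correlation of a spatially homogeneous colored noise, or by approximating $|\cdot|^{-\gamma}$ by a family of integrable kernels and passing to the limit using the finiteness supplied by Lemma~\ref{Lem:Green1-gamma}. Everything else is routine once the spectral identity is in place; note in particular that we never need the pointwise upper bound on $G_t$ from Lemma~\ref{Lemma:G-bounds}, so the restriction $\alpha>d$ is avoided.
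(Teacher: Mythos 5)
Your proposal is correct, but it takes a genuinely different route from the paper. The paper never goes to Fourier space for this lemma: it uses the subordination formula $G_t(x)=\int_0^\infty p(s,x)f_{E_t}(s)\,\d s$, applies Fubini, and exploits the Chapman--Kolmogorov (semigroup) property of the stable density $p$ to collapse the double spatial integral into $\int_{\R^d}p(s+s',x-y+w)|w|^{-\gamma}\d w\le c(s+s')^{-\gamma/\alpha}$; it then integrates out the two subordinator densities, and the change of variables $u=t s^{-1/\beta}$ produces the factor $t^{-\gamma\beta/\alpha}$ times $\int_0^\infty u^{\gamma\beta/\alpha}g_\beta(u)\,\d u$, whose finiteness (via the tail $g_\beta(u)\sim c\,u^{-\beta-1}$) is precisely where the hypothesis $\gamma<\alpha$ is used. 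Your argument instead pairs $\hat G_t$ against the spectral measure of the Riesz kernel and invokes Lemma \ref{Lem:Green1-gamma}; this is the standard "Dalang condition" computation for spatially homogeneous noise, and it is sound. Each approach buys something: yours yields an exact identity (up to the oscillatory factor $e^{i\xi\cdot(x-y)}$), hence shows the bound is sharp at $x=y$, avoids all pointwise kernel estimates and the lack-of-semigroup difficulty, and in fact only requires $\gamma<2\alpha\wedge d$ rather than $\gamma<\alpha$; the paper's argument stays entirely in physical space, involves no distribution theory, and is self-contained within the subordination machinery the authors use throughout. The one point you must genuinely discharge is the distributional Parseval step, since $|\cdot|^{-\gamma}$ is not in $L^1+L^2$; your suggested remedies work, and perhaps the cleanest is the Gamma-integral representation $|x|^{-\gamma}=c_{d,\gamma}\int_0^\infty s^{\gamma/2-1}e^{-s|x|^2}\,\d s$, which reduces everything to Gaussian kernels where Fubini and Plancherel apply classically, with the finiteness of the limiting spectral integral supplied by Lemma \ref{Lem:Green1-gamma}.
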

\begin{proof}
We start by writing
\begin{equation*}
\begin{aligned}
\int_{\R^d}\int_{\R^d}&p(t,\,x-w)p(t',\,y-z)f(z,\,w)\d w \d z\\
&=\int_{\R^d}p(t+t',\,x-y+w)|w|^{-\gamma}\,\d w\\
&\leq \frac{c_2}{(t+t')^{\gamma/\alpha}}.
\end{aligned}
\end{equation*}
We use subordination again to write
\begin{equation*}
\begin{aligned}
\int_{\R^d}&\int_{\R^d}G_t(x-w)G_t(y-z)f(z,\,w)\d w \d z\\
&=\int_{\R^d\times \R^d} \int_0^\infty \int_0^\infty p(s,\,x-w)p(s',\,y-z)f_{X_t}(s)f_{X_t}(s')dsds'f(z,\,w)\d w \d z\\
&=\int_0^\infty \int_0^\infty\int_{\R^d\times \R^d} p(s,\,x-w)p(s',\,y-z)f(z,\,w)\d w \d zf_{X_t}(s)f_{X_t}(s')\d s\d s'\\
&=\int_0^\infty \int_0^\infty\int_{\R^d\times \R^d} p(s,\,x-w)p(s',\,y-z)f(z,\,w)\d w \d zf_{X_t}(s)f_{X_t}(s')\d s\d s'\\
&\leq \int_0^\infty \int_0^\infty\frac{c_2}{(s+s')^{\gamma/\alpha}}f_{X_t}(s)f_{X_t}(s')\d s\d s'\\
&\leq\int_0^\infty \int_0^\infty\frac{c_2}{s^{\gamma/\alpha}}f_{X_t}(s)f_{X_t}(s')\d s\d s'.
\end{aligned}
\end{equation*}
Recalling that $f_{X_t}(s')$ is a probability density, we can use a change of variable to see that the right hand side of the above display is bounded by
\begin{equation*}
\frac{c_3}{t^{\gamma\beta/\alpha}}\int_0^\infty u^{\gamma\beta/\alpha}g_\beta(u)\,\d u.
\end{equation*}
Since the above integral is finite, the result is proved.
\end{proof}

The next result gives the behaviour of non-random term for the mild formulation for the solution. For notational convenience, we set
\begin{equation*}
(\sG u)_t(x):=\int_{\R^d}G_t(x-y)u_0(y)\,\d y.
\end{equation*}
The proof will strongly rely on the representation given by \eqref{Eq:Green1} and we will also need
\begin{equation*}
(\tilde{\sG} u)_t(x):=\int_{\R^d}p(t,\,x-y)u_0(y)\,\d y,
\end{equation*}
where $p(t,\,x)$ is the heat kernel of the stable process. We will need the fact that for $t$ large enough, we have
 $(\tilde{\sG} u)_t(x)\geq c_1t^{-d/\alpha}$ for $x\in B(0,\,t^{1/\alpha})$.
We will prove this fact and a bit more in the following.  The proof heavily relies on the properties of $p(t,\,x)$ which we stated earlier in this section.
\begin{lemma}\label{stable-growth}
Then there exists a $t_0>0$ large enough such that for all $t>0$
 \begin{equation*}
 (\tilde{\sG} u)_{t+t_0}(x)\geq c_1t^{-d/\alpha},\quad\text{whenever}\quad x\in B(0,\,t^{1/\alpha}),
\end{equation*}
where $c_1$ is a positive constant. More generally, there exists a positive constant $\kappa>0$ such that for $s\leq t$ and $t\geq t_0$, we have
 \begin{equation*}
 (\tilde{\sG} u)_{s+t_0}(x)\geq c_2t^{-\kappa},\quad\text{whenever}\quad x\in B(0,\,t^{1/\alpha}).
\end{equation*}
$c_2$ is some positive constant.
\end{lemma}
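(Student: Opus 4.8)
The plan is to reduce the whole statement to the two–sided bound for the stable kernel $p$ recalled in the proof of Lemma \ref{Lemma:G-bounds}, together with the fact that $u_0$ is strictly positive on a set of positive measure. First I would extract from Assumption \ref{lowerbound} a bounded ``core'' of the initial data: since $u_0\geq 0$ and $\{u_0>0\}$ has positive measure, there are $R>0$ and $\theta>0$ such that the set $A:=\{y\in B(0,R):u_0(y)\geq\theta\}$ satisfies $|A|>0$. Because $p\geq 0$ and $u_0\geq 0$, for any time $r>0$ and any $x$ I can discard all of $\rd\setminus A$ and bound
\[
(\tilde{\sG}u)_r(x)=\int_{\rd}p(r,x-y)u_0(y)\,\d y\ \geq\ \theta\int_A p(r,x-y)\,\d y\ \geq\ \theta\,|A|\,\inf_{y\in A}p(r,x-y).
\]
For $x\in B(0,t^{1/\alpha})$ and $y\in A\subset B(0,R)$ the triangle inequality gives $|x-y|\leq t^{1/\alpha}+R$, so by the radial monotonicity of $p(r,\cdot)$ the infimum is at least $p(r,z_t)$, where $z_t$ is any point with $|z_t|=t^{1/\alpha}+R$. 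Thus the whole problem collapses to lower bounding a single value of the stable kernel, for which I use the estimate $p(r,z)\geq c\,(r^{-d/\alpha}\wedge r|z|^{-(d+\alpha)})$.

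For the first (sharp) estimate I take $r=t+t_0$ and choose $t_0\geq R^\alpha$, working with $t\geq t_0$. The key point is that when time and radius are matched, both competing terms are comparable to $t^{-d/\alpha}$: one has $r^{-d/\alpha}\geq(2t)^{-d/\alpha}$, and since $|z_t|\leq 2t^{1/\alpha}$ (as $t\geq R^\alpha$), also $r|z_t|^{-(d+\alpha)}\geq t(2t^{1/\alpha})^{-(d+\alpha)}=c\,t^{-d/\alpha}$, the exponents matching because $1-(d+\alpha)/\alpha=-d/\alpha$. Hence the minimum is $\gtrsim t^{-d/\alpha}$ and $(\tilde{\sG}u)_{t+t_0}(x)\geq c_1t^{-d/\alpha}$. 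For the general statement I keep $r=s+t_0$ with $s\leq t$, so that $t_0\leq r\leq 2t$; now the factor $r$ in the numerator can be as small as $t_0$, so I only retain $r|z_t|^{-(d+\alpha)}\geq t_0(2t^{1/\alpha})^{-(d+\alpha)}=c\,t^{-(d+\alpha)/\alpha}$, while still $r^{-d/\alpha}\geq(2t)^{-d/\alpha}\geq c\,t^{-(d+\alpha)/\alpha}$. This yields $(\tilde{\sG}u)_{s+t_0}(x)\geq c_2t^{-\kappa}$ with $\kappa=(d+\alpha)/\alpha=1+d/\alpha$.

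The only genuinely delicate point is the interplay between the two regimes of the kernel. The sharp rate $t^{-d/\alpha}$ in the first statement is available precisely because the evaluation radius $t^{1/\alpha}+R$ sits at the diffusive scale $(t+t_0)^{1/\alpha}$ of $p(t+t_0,\cdot)$ (the ``bulk''), where the two sides of the minimum balance. For $s\ll t$, by contrast, the radius $t^{1/\alpha}$ lies deep in the heavy polynomial tail of $p(s+t_0,\cdot)$, and it is this tail that forces the weaker exponent $\kappa$; one cannot do better uniformly in $s\leq t$. Everything else—fixing $t_0\geq R^\alpha$, and the elementary inequalities $(t+t_0)^{-d/\alpha}\geq(2t)^{-d/\alpha}$ and $t^{1/\alpha}+R\leq 2t^{1/\alpha}$ valid for $t\geq t_0$—is routine. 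As is standard for these lower bounds, the estimates are intended for $t$ large (i.e. $t\geq t_0$), which is all that the subsequent moment lower-bound arguments require.
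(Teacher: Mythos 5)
Your proof is correct, but it takes a genuinely different route from the paper's. The paper never localizes the initial data: it exploits two structural properties of the stable kernel, namely the factorization inequality $p(t_0,\,x-y)\geq p(t_0,\,2x)\,p(t_0,\,2y)$ (valid once $t_0$ is so large that $p(t_0,\,0)\leq 1$) combined with the scaling identity $p(t_0,\,2x)=2^{-d}p(t_0/2^{\alpha},\,x)$, and then the semigroup property, which collapses the whole convolution to a single kernel value, $(\tilde{\sG}u)_{t+t_0}(x)\geq c\,p(t+2^{-\alpha}t_0,\,x)$; both polynomial bounds of the lemma are then read off from this one inequality by scaling and radial monotonicity. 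Your argument --- extracting a core $A\subset B(0,\,R)$ with $u_0\geq\theta$ on $A$, $|A|>0$, and bounding $\inf_{y\in A}p(r,\,x-y)$ by the two-sided stable estimate at radius $t^{1/\alpha}+R$ --- is more elementary (no factorization trick, no semigroup step), and it even yields a better exponent in the second bound, $\kappa=1+d/\alpha$ versus the paper's $\kappa=1+2d/\alpha$; since the lemma only asserts the existence of some $\kappa>0$, both are acceptable. What the paper's route buys is the intermediate inequality $(\tilde{\sG}u)_{t+t_0}(x)\geq c\,p(t+2^{-\alpha}t_0,\,x)$ itself, whose proof pattern is recycled in Remark \ref{lower-bound-cauchy-problem} to produce a lower bound valid for \emph{all} small times, which is needed for the noise-excitation results; with your approach that remark would have to be reworked separately. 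Finally, your explicit restriction to $t\geq t_0$ is not a defect: the lemma's ``for all $t>0$'' cannot hold literally, since the left-hand side is bounded by $\|u_0\|_{\infty}$ while $t^{-d/\alpha}\to\infty$ as $t\to0$, and the paper's own proof, exactly like yours, only delivers the estimate for $t$ bounded away from zero, which is all that the subsequent arguments use.
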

\begin{proof}
We begin with the following observation about the heat kernel. Choose $t_0$ large enough so that $p(t_0,\,0)\leq 1$. We therefore have
\begin{equation*}
\begin{aligned}
p(t_0,\,x-y)&=p(t_0,\,2(x-y)/2)\\
&\geq p(t_0,\,2x)p(t_0,\,2y)\\
&=\frac{1}{2^d}p(t_0/2^\alpha,\,x)p(t_0,\,2y).
\end{aligned}
\end{equation*}
 This immediately gives
 \begin{equation*}
 \begin{aligned}
(\tilde{\sG} u)_{t_0}(x)&=\int_{\R^d}p(t,\,x-y)u_0(y)\,\d y\\
&\geq c_1p(t_0/2^\alpha,\,x)\int_{\R^d}p(t_0,\,2y)u_0(y)\,\d y.
 \end{aligned}
 \end{equation*}
 We now use the semigroup property to obtain
 \begin{eqnarray}
(\tilde{\sG} u)_{t+t_0}(x)&=&\int_{\R^d}p(t+t_0,\,x-y)u_0(y)\,\d y\nonumber\\
&=&\int_{\R^d}p(t,\,x-y )(\tilde{\sG} u)_{t_0}(y)\,\d y\nonumber\\
&\geq& c_2p(t+t_0/2,\,x),\label{ineq-semigroup}
 \end{eqnarray}
 This inequality shows that for any fixed $x$, $(\tilde{\sG} u)_{t+t_0}(x)$ decays as $t$ goes to infinity. It also shows that
 \begin{equation*}
 (\tilde{\sG} u)_{t+t_0}(x)\geq c_3t^{-d/\alpha},\quad\text{whenever}\quad |x|\leq t^{1/\alpha}.
\end{equation*}
This follows from the fact that $p(t+t_0/2,\,x)\geq c_4t^{-d/\alpha}$ if $|x|\leq t^{1/\alpha}$. The more general statement of the lemma needs a bit more work.
\begin{equation*}
\begin{aligned}
(\tilde{\sG} u)_{s+t_0}(x)&\geq c_2p(s+t_0/2,\,x)\\
&\geq c_3\left(\frac{t_0}{2s+t_0}\right)^{d/\alpha}p(t_0,\,x)\\
&\geq c_3\left(\frac{t_0}{2s+t_0}\right)^{d/\alpha}p(t_0,\,t^{1/\alpha}).
\end{aligned}
\end{equation*}
Since we are interested in the case when $s\leq t$ and $t\geq t_0$, the right hand side can be bounded as follows
\begin{equation*}
(\tilde{\sG} u)_{s+t_0}(x)\geq c_4\left(\frac{t_0}{2t+t_0}\right)^{d/\alpha}\frac{t_0}{t^{d/\alpha+1}}.
\end{equation*}
The second inequality in the statement of the lemma follows from the above.
\end{proof}

\begin{lemma}\label{lemma:lower-bound-frac-diffusion}
There exists a  $t_0>0$ and a constant $c_1$ such that for all $t>t_0$ and all $x\in B(0,\,t^{\beta/\alpha})$, we have
 \begin{equation*}
(\sG u)_{s+t}(x)\geq \frac{c_1}{t^{\beta \kappa}}\quad\text{for all}\quad s\leq t.
 \end{equation*}
\end{lemma}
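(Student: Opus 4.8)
The plan is to transfer the lower bound for the stable-kernel average $(\tilde{\sG} u)$ established in Lemma \ref{stable-growth} to the fractional average $(\sG u)$ by means of the subordination identity \eqref{Eq:Green1}. First I would record the subordinated form of $(\sG u)$. Starting from $G_t(x)=\int_0^\infty p((t/u)^\beta,x)g_\beta(u)\,\d u$ (the very substitution already used in the proof of Lemma \ref{Lemma:G-bounds}) and applying Tonelli's theorem, which is legitimate since $u_0\geq 0$ and all the kernels are nonnegative, one obtains
$$
(\sG u)_t(x)=\int_0^\infty (\tilde{\sG} u)_{(t/u)^\beta}(x)\,g_\beta(u)\,\d u.
$$

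Next I would discard most of the $u$-integral and keep only a fixed compact window, say $u\in[1,2]$, on which $g_\beta$ is bounded below by a positive constant and $\int_1^2 g_\beta(u)\,\d u>0$. For $u$ in this window, $s\leq t$, and $t$ large, the time argument $\tau(u):=((s+t)/u)^\beta$ satisfies $(t/2)^\beta\leq \tau(u)\leq (2t)^\beta$, so in particular $\tau(u)\geq t_0$ once $t$ is large enough; hence I may write $\tau(u)=a(u)+t_0$ with $a(u)\geq 0$. Taking the ``large parameter'' of the second (more general) assertion of Lemma \ref{stable-growth} to be $b:=(2t)^\beta$, all of its hypotheses hold uniformly in $u\in[1,2]$ and $s\leq t$: indeed $a(u)\leq \tau(u)\leq b$, $b\geq t_0$ for large $t$, and $|x|\leq t^{\beta/\alpha}\leq b^{1/\alpha}$ so that $x\in B(0,b^{1/\alpha})$. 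Lemma \ref{stable-growth} then yields
$$
(\tilde{\sG} u)_{\tau(u)}(x)=(\tilde{\sG} u)_{a(u)+t_0}(x)\geq c\, b^{-\kappa}=c'\,t^{-\beta\kappa},
$$
uniformly over the relevant range of $u$, $s$, and $x$.

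Finally, inserting this bound into the subordination identity and integrating only over $[1,2]$ gives
$$
(\sG u)_{s+t}(x)\geq \int_1^2 (\tilde{\sG} u)_{((s+t)/u)^\beta}(x)\,g_\beta(u)\,\d u\geq c'\,t^{-\beta\kappa}\int_1^2 g_\beta(u)\,\d u=\frac{c_1}{t^{\beta\kappa}},
$$
which is the claim. I expect the only delicate point to be the bookkeeping in the middle paragraph: one must check that a single choice of the large parameter $b$ of order $t^{\beta}$ simultaneously dominates $a(u)$ for every $u$ in the window \emph{and} absorbs the ball $B(0,t^{\beta/\alpha})$, so that the constants produced by Lemma \ref{stable-growth} are genuinely uniform in both $u\in[1,2]$ and $s\leq t$. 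The exponent $\beta\kappa$ then emerges automatically from $b^{-\kappa}=(2t)^{-\beta\kappa}$, which explains the extra factor of $\beta$ relative to the exponent $\kappa$ appearing in Lemma \ref{stable-growth}.
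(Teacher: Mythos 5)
Your proof is correct and follows essentially the same route as the paper: write $(\sG u)_{s+t}(x)=\int_0^\infty (\tilde{\sG} u)_{((s+t)/u)^\beta}(x)\,g_\beta(u)\,\d u$ by subordination, restrict the $u$-integral to a window of positive $g_\beta$-mass, and apply the second assertion of Lemma \ref{stable-growth} uniformly there. The only (cosmetic) difference is your choice of window $u\in[1,2]$ with the fixed large parameter $b=(2t)^\beta$, whereas the paper integrates over $u\in(0,1]$ and applies the lemma with the $u$-dependent parameter $(t/u)^\beta$, absorbing the resulting factor $\int_0^1 u^{\beta\kappa}g_\beta(u)\,\d u$ into the constant.
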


\begin{proof}
We start off by writing
\begin{equation*}
\begin{aligned}
(\sG u)_t(x)&=\int_{\R^d}G_t(x-y)u_0(y)\,\d  y\\
&=\int_{\R^d}\int_0^\infty p(s,\,x-y)f_{X_t}(s)\,d s\,u_0(y)\d  y\\
&=\int_0^\infty (\tilde{\sG} u)_s(x)f_{X_t}(s)\,\d  s.
\end{aligned}
\end{equation*}

After the usual change of variable, we have
\begin{equation*}
(\sG u)_t(x)=\int_0^\infty (\tilde{\sG} u)_{(t/u)^\beta}(x)g_\beta(u)\,\d  u,
\end{equation*}
which immediately gives
\begin{equation*}
\begin{aligned}
(\sG u)_t(x)&\geq\int_0^1(\tilde{\sG} u)_{(t/u)^\beta}(x)g_\beta(u)\,\d u.
\end{aligned}
\end{equation*}
The above holds for any time $t$. In particular, we have
\begin{equation*}
\begin{aligned}
(\sG u)_{t+s}(x)&\geq\int_0^1(\tilde{\sG} u)_{((t+s)/u)^\beta}(x)g_\beta(u)\,\d u.
\end{aligned}
\end{equation*}
We now that note that $x\in B(0,\,t^{\beta/\alpha})$, so we have $x\in B(0,\,t^{\beta/\alpha}/u)$ and hence for $t$ large enough and $s\leq t$, we have $(\tilde{\sG} u)_{((s+t_0)/u)^\beta}(x)\geq \left(\frac{u}{t}\right)^{\beta\kappa}$ by the previous lemma.  Combining the above estimates, we have the result.\end{proof}

\begin{remark}\label{lower-bound-cauchy-problem}
The above is enough for the lower bound given in Theorem \ref{white:lowerbound} and the lower bound described in Theorem \ref{thm-colored-noise}. But we need an analogous result for the the noise excitability result which hold for all $t>0$.
Fix $\tilde{t}>0$ such that $p(t,\,0)\leq 1$ whenever $t\geq \tilde{t}$. For any fixed $t>0$, we choose $k$ large enough so that  $2^kt>\tilde{t}$. Set $t^*:=2^kt$ and $s=2^{-k}$.
\begin{equation*}
\begin{aligned}
p(t,\,x-y)&=p(st^*,\,x-y)\\
&=s^{-d/\alpha}p(t^*,\,s^{-1/\alpha}(x-y))\\
&=s^{-d/\alpha}p(t^*,\,\frac{s^{-1/\alpha}}{2}(2x-2y)).\\
\end{aligned}
\end{equation*}
For any fixed $t>0$, we choose $k$ large enough so that  $2^kt>\tilde{t}$.
\begin{equation*}
\begin{aligned}
p(t,\,x-y)&\geq s^{-d/\alpha}p(t^*,\,2s^{-1/\alpha}x)p(t^*,\,2s^{-1/\alpha}y)\\
&=2^{dk/\alpha}p(2^kt,\,2^{1+k/\alpha}x)p(2^kt,\,2^{1+k/\alpha}y).
\end{aligned}
\end{equation*}
Note that the above holds for any time $t$. We therefore have
\begin{equation*}
\begin{aligned}
(\tilde{\sG} u_0)_{t_0+s}(x)&=\int_{\R^d}p(t_0+s,\,x-y)u_0(y)\,\d y\\
&\geq 2^{dk/\alpha}p(2^k(t_0+s),\,2^{1+k/\alpha}x)\int_{\R^d}p(2^k(t_0+s),\,2^{1+k/\alpha}y)u_0(y)\,\d y.
\end{aligned}
\end{equation*}
We have that $t_0+s\geq t_0$. Therefore,
\begin{equation*}
p(2^k(t_0+s),\,2^{1+k/\alpha}x)\geq \left(\frac{t_0}{s+t_0}\right)^{d/\alpha}p(2^kt_0,\,2^{1+k/\alpha}x)
\end{equation*}
We thus have
\begin{equation*}
\begin{aligned}
(\tilde{\sG} u_0)_{t_0+s}(x)&\geq 2^{dk/\alpha}\left(\frac{t_0}{s+t_0}\right)^{2d/\alpha}p(2^kt_0,\,2^{1+k/\alpha}x)\int_{\R^d}p(2^kt_0,\,2^{1+k/\alpha}y)u_0(y)\,\d y.
\end{aligned}
\end{equation*}
So now since $|x|\leq t^{1/\alpha}$, we have
\begin{equation*}
(\tilde{\sG} u_0)_{t_0+s}(x)\geq c_1\left(\frac{1}{t_0+s}\right)^{2d/\alpha},
\end{equation*}
where the constant $c_1$ is dependent on $t_0$. We can now use  similar ideas as in the proof of the previous result to conclude that if $x\in B(0,\,t^{\beta/\alpha})$, we have
\begin{equation*}
(\sG u_0)_{t_0+s}(x)\geq c_2\left(\frac{1}{t_0+s}\right)^{2\beta d/\alpha}.
\end{equation*}
Since we have $s\leq t$, we have essentially found a lower bound for $(\sG u_0)_{t_0+s}(x)$; a bound which depends only on $t$. This holds for any $t_0>0$ and any $t>0$.
\end{remark}

We end this section with a few results from \cite{foodun-liu-omaba-2014}. These will  be useful for the proofs of our main results.
\begin{lemma}[Lemma 2.3 in \cite{foodun-liu-omaba-2014}]\label{lemma:exp-lower-bound}
Let $0<\rho<1$, then there exists a positive constant $c_1$ such that or all $b\geq (e/\rho)^\rho$,
$$
\sum_{j=0}^\infty\bigg(\frac{b}{j^\rho}\bigg)^j\geq \exp\bigg(c_1b^{1/\rho}\bigg).
$$
\end{lemma}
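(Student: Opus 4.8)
Since every summand is nonnegative, the natural plan is to discard all but one carefully chosen term: for any positive integer $n$ one has
$$
\sum_{j=0}^\infty\left(\frac{b}{j^\rho}\right)^j\geq \left(\frac{b}{n^\rho}\right)^n,
$$
and the whole problem reduces to choosing $n$ so that the right-hand side is already as large as $\exp(c_1 b^{1/\rho})$. To locate a good $n$ I would treat the exponent as a function of a continuous variable. Writing $\log a_j=j\log b-\rho j\log j$ and differentiating gives $\frac{d}{dj}\log a_j=\log b-\rho\log j-\rho$, whose unique zero is $j_0=b^{1/\rho}/e$; since the second derivative is $-\rho/j<0$, this is the maximizer, and at it the exponent equals $\rho j_0=\rho b^{1/\rho}/e$. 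This already reveals that the right constant should be of order $\rho/e$, and that the optimal index scales like $b^{1/\rho}$.

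The plan is then to set $n:=\lfloor b^{1/\rho}/e\rfloor$ and estimate $\log a_n=n\log b-\rho n\log n$ directly. Because $n\leq b^{1/\rho}/e$, we have $\log n\leq \tfrac{1}{\rho}\log b-1$, and multiplying this by the negative quantity $-\rho n$ reverses the inequality to give
$$
\log a_n= n\log b-\rho n\log n\geq n\log b-\rho n\left(\tfrac{1}{\rho}\log b-1\right)=\rho n.
$$
It remains to see that $n$ itself is comparable to $b^{1/\rho}$. Here the hypothesis $b\geq (e/\rho)^\rho$ enters: it is equivalent to $b^{1/\rho}/e\geq 1/\rho$, so in particular $j_0>1$ and $n\geq 1$, which legitimizes the single-term bound and avoids the degenerate $j=0$ summand. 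Moreover $1\leq \rho\, b^{1/\rho}/e$ yields $n\geq b^{1/\rho}/e-1\geq (1-\rho)b^{1/\rho}/e$, whence $\log a_n\geq \rho n\geq \tfrac{\rho(1-\rho)}{e}\,b^{1/\rho}$, and the lemma follows with $c_1=\rho(1-\rho)/e$.

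The only delicate point, and the one I would be most careful about, is the passage from the continuous optimizer $j_0$ to an integer index: the floor could in principle destroy the bound, so one must check both that $n\geq 1$ (so that the chosen term is genuinely present in the series and the $0^0$ ambiguity never arises) and that $n$ loses at most a constant factor relative to $b^{1/\rho}$. Both are secured by the threshold $b\geq (e/\rho)^\rho$, which is exactly calibrated to force $b^{1/\rho}/e\geq 1/\rho>1$. Everything else is an elementary manipulation of $\log a_j$, and no property of the series beyond positivity of its terms is needed.
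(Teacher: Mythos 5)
Your proof is correct, and the delicate points are handled properly: the threshold $b\geq (e/\rho)^\rho$ is exactly what guarantees both $n=\lfloor b^{1/\rho}/e\rfloor\geq 1$ (so the single retained term is a genuine, well-defined summand) and $n\geq(1-\rho)b^{1/\rho}/e$, and the sign flip when multiplying $\log n\leq\tfrac1\rho\log b-1$ by $-\rho n$ is used correctly, yielding the valid constant $c_1=\rho(1-\rho)/e>0$ (which may depend on $\rho$, as the statement permits). Note that the paper itself gives no proof of this lemma: it is imported verbatim as Lemma 2.3 of \cite{foodun-liu-omaba-2014}, so there is no in-paper argument to compare against; your largest-term argument --- locating the continuous maximizer $j_0=b^{1/\rho}/e$ of $j\mapsto j\log b-\rho j\log j$, rounding down, and discarding the rest of the (nonnegative) series --- is a complete, self-contained replacement for that external citation, and it matches the standard Laplace-method style of proof one would expect for such a bound.
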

\begin{proposition}\label{prop:renewal-upper}[Proposition 2.5 in \cite{foodun-liu-omaba-2014}]
Let $\rho>0$ and suppose $f(t)$ is a locally integrable function satisfying
$$
f(t)\leq c_1+\kappa \int_0^t(t-s)^{\rho-1} f(s)\d s \ \ \mathrm{for \ all} \ \ t>0,
$$
where $c_1$ is some positive number. Then, we have
$$
f(t)\leq c_2\exp(c_3(\Gamma(\rho))^{1/\rho}\kappa^{1/\rho} t)\  \ \mathrm{for \ all} \ \ t>0,
$$ for some positive constants $c_2$ and $c_3$.
\end{proposition}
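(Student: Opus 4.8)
The plan is to treat the hypothesis as a fractional (weakly singular) Gronwall inequality and to iterate it. Writing $Ag(t):=\kappa\int_0^t(t-s)^{\rho-1}g(s)\,\d s=\kappa\Gamma(\rho)\,I^{\rho}_t g(t)$, the assumption reads $f\le c_1+Af$. Since the kernel $(t-s)^{\rho-1}$ is nonnegative, $A$ preserves the pointwise order; applying $A$ repeatedly to $f\le c_1+Af$ and using linearity gives, for every $n\ge 1$,
$$
f(t)\le c_1\sum_{k=0}^{n-1}A^k\mathbf{1}(t)+A^n f(t),
$$
where $\mathbf{1}$ denotes the constant function $1$. Note that this step only uses local integrability of $f$, not its sign.

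The key computation is to evaluate $A^k\mathbf 1$. The Riemann--Liouville integral obeys the composition rule $I^{\mu}_t I^{\nu}_t=I^{\mu+\nu}_t$, so $A^k=(\kappa\Gamma(\rho))^k I^{k\rho}_t$, and since $I^{k\rho}_t\mathbf 1(t)=t^{k\rho}/\Gamma(k\rho+1)$ we obtain
$$
c_1\sum_{k=0}^{n-1}A^k\mathbf 1(t)=c_1\sum_{k=0}^{n-1}\frac{\big(\kappa\Gamma(\rho)t^{\rho}\big)^k}{\Gamma(k\rho+1)}.
$$
For the remainder I would bound, once $n\rho\ge 1$,
$$
A^nf(t)=\frac{(\kappa\Gamma(\rho))^n}{\Gamma(n\rho)}\int_0^t(t-s)^{n\rho-1}f(s)\,\d s\le \frac{(\kappa\Gamma(\rho))^n t^{n\rho-1}}{\Gamma(n\rho)}\int_0^t|f(s)|\,\d s,
$$
which tends to $0$ as $n\to\infty$ for each fixed $t$, because the factorial growth of $\Gamma(n\rho)$ dominates $(\kappa\Gamma(\rho)t^{\rho})^n$. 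Letting $n\to\infty$ then yields $f(t)\le c_1E_\rho\big(\kappa\Gamma(\rho)t^{\rho}\big)$, where $E_\rho$ is the Mittag-Leffler function defined in \eqref{ML-function}.

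The final step is to convert this entire-function bound into the stated exponential. I would invoke the classical large-argument asymptotics of $E_\rho$ on the positive real axis, $E_\rho(z)\sim\rho^{-1}\exp(z^{1/\rho})$ as $z\to\infty$; together with continuity and boundedness of $E_\rho$ on compact sets this furnishes a constant $C=C(\rho)$ with $E_\rho(z)\le C\exp(z^{1/\rho})$ for all $z\ge 0$. Taking $z=\kappa\Gamma(\rho)t^{\rho}$, so that $z^{1/\rho}=\Gamma(\rho)^{1/\rho}\kappa^{1/\rho}t$, produces $f(t)\le c_2\exp\!\big(c_3\Gamma(\rho)^{1/\rho}\kappa^{1/\rho}t\big)$ with $c_2=c_1C$ and $c_3=1$. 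I expect the main obstacle to be precisely this last point: the iteration and the remainder estimate are routine, but passing from the convergent series to a genuine exponential upper bound that is \emph{uniform over all $t>0$} (the small-$t$ regime being controlled by the series itself and the large-$t$ regime by the asymptotic expansion) is where the real work lies, and it is what pins down the exact exponent $\Gamma(\rho)^{1/\rho}\kappa^{1/\rho}$ that makes this proposition useful for tracking the dependence on $\lambda$.
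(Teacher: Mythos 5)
Your argument is correct, but note that the paper itself offers no proof to compare against: Proposition \ref{prop:renewal-upper} is imported verbatim from Proposition 2.5 of \cite{foodun-liu-omaba-2014}, and the authors only reproduce the statement. Your route is the standard one and, in substance, the one used in that reference: iterate the monotone operator $Af=\kappa\Gamma(\rho)I^{\rho}_t f$, use the semigroup property $I^{\mu}_tI^{\nu}_t=I^{\mu+\nu}_t$ to identify $A^k\mathbf{1}(t)=(\kappa\Gamma(\rho))^k t^{k\rho}/\Gamma(k\rho+1)$, kill the remainder by the super-geometric growth of $\Gamma(n\rho)$, and recognize the limit as $c_1E_\rho(\kappa\Gamma(\rho)t^\rho)$ with $E_\rho$ as in \eqref{ML-function}. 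Two small remarks. First, for $0<\rho<1$ the kernel is singular, so you should say explicitly that $A^nf$ is finite a.e.\ and locally integrable (Young's inequality for convolutions of $L^1_{\mathrm{loc}}$ functions supported on $[0,\infty)$); this is the only place where local integrability, rather than sign, genuinely matters, and it is exactly what makes your iteration legitimate. Second, the final step need not invoke the large-argument asymptotics of the Mittag--Leffler function: since $\Gamma(k\rho+1)\geq c^{\,k}(k!)^{\rho}$ by Stirling, one has $E_\rho(z)\leq\sum_k\bigl(C z^{1/\rho}\bigr)^{k\rho}/(k!)^{\rho}\leq\bigl(\sum_k (Cz^{1/\rho})^{k}/k!\bigr)^{\rho}\vee\text{(const)}\leq C'\exp\bigl(\rho C z^{1/\rho}\bigr)$, an elementary bound in the same spirit as the companion Lemma \ref{lemma:exp-lower-bound} that the paper does reproduce; this gives the stated conclusion with a possibly larger $c_3$ but no appeal to H-function or Mittag--Leffler asymptotic expansions. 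Either ending pins down the exponent $\Gamma(\rho)^{1/\rho}\kappa^{1/\rho}t$, which is the feature the paper actually exploits (with $\kappa\propto\lambda^2$ and $\rho=(\alpha-d\beta)/\alpha$) to get the $\lambda^{2\alpha/(\alpha-d\beta)}$ growth rate.
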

Also we give the following converse.
\begin{proposition}[Proposition 2.6 in \cite{foodun-liu-omaba-2014}]
Let $\rho>0$ and suppose $f(t)$ is nonnegative,  locally integrable function satisfying
$$
f(t)\geq c_1+\kappa \int_0^t(t-s)^{\rho-1} f(s)\d s \ \ \mathrm{for \ all} \ \ t>0,
$$
where $c_1$ is some positive number. Then, we have
$$
f(t)\geq c_2\exp(c_3(\Gamma(\rho))^{1/\rho}\kappa^{1/\rho} t)\  \ \mathrm{for \ all} \ \ t>0,
$$ for some positive constants $c_2$ and $c_3$.
\end{proposition}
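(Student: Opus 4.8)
The plan is to iterate the hypothesis and recognise the resulting series as a Mittag--Leffler function. Write $Af(t):=\kappa\int_0^t(t-s)^{\rho-1}f(s)\,\d s$, so the assumption is $f\ge c_1+Af$. Because the kernel $(t-s)^{\rho-1}$ is nonnegative, $A$ is order-preserving on nonnegative functions, so substituting the inequality into itself and inducting gives
\[
f(t)\ge c_1\sum_{n=0}^{N}(A^n\mathbf 1)(t)+(A^{N+1}f)(t)\qquad\text{for every }N\ge0,
\]
where $\mathbf 1$ is the constant function $1$. Since $f\ge0$ and the kernel is nonnegative, $A^{N+1}f\ge0$; dropping this term and letting $N\to\infty$ yields $f(t)\ge c_1\sum_{n=0}^\infty (A^n\mathbf 1)(t)$, with all partial sums increasing so that no convergence of a remainder needs to be checked.

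Next I would evaluate $A^n\mathbf 1$ in closed form. Since $A=\kappa\Gamma(\rho)I^\rho$ with $I^\rho$ the Riemann--Liouville fractional integral, the semigroup identity $I^\rho I^\mu=I^{\rho+\mu}$ together with $I^\rho\mathbf 1(t)=t^\rho/\Gamma(\rho+1)$ gives $(A^n\mathbf 1)(t)=(\kappa\Gamma(\rho))^n\,t^{n\rho}/\Gamma(n\rho+1)$. Hence
\[
f(t)\ge c_1\sum_{n=0}^\infty \frac{(\kappa\Gamma(\rho)\,t^\rho)^n}{\Gamma(n\rho+1)}=c_1\,E_\rho\!\big(\kappa\Gamma(\rho)\,t^\rho\big).
\]
This is exactly where the target exponent originates: setting $X:=\kappa\Gamma(\rho)t^\rho$, the claimed rate has exponent $(\Gamma(\rho))^{1/\rho}\kappa^{1/\rho}t=X^{1/\rho}$, so it remains to show the Mittag--Leffler series grows at least like $\exp(c\,X^{1/\rho})$.

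To extract this growth I would invoke Lemma~\ref{lemma:exp-lower-bound} (I work in the relevant range $\rho\in(0,1)$, where that lemma applies; $\rho\ge1$ is easier). Bounding $\Gamma(n\rho+1)$ from above by Stirling, $\Gamma(n\rho+1)\le C\sqrt n\,(\rho/e)^{n\rho}n^{n\rho}$, and putting $b:=X(e/\rho)^\rho$, each term satisfies $X^n/\Gamma(n\rho+1)\ge (c/\sqrt n)\,(b/n^\rho)^n$. Lemma~\ref{lemma:exp-lower-bound} then produces, whenever $b\ge(e/\rho)^\rho$ (equivalently $X\ge1$, i.e. $t$ bounded below), a lower bound $\exp(c\,b^{1/\rho})=\exp\big(c(e/\rho)X^{1/\rho}\big)$, which is the asserted rate after renaming constants. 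On the complementary small-$t$ range $X<1$ the exponential factor is bounded on compacts, so the trivial bound $f\ge c_1$ already yields the conclusion once $c_2$ is taken small enough.

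The main obstacle is reconciling the spurious Stirling factor $n^{-1/2}$ with the clean statement of Lemma~\ref{lemma:exp-lower-bound}, since $\sum_n (b/n^\rho)^n$ does not literally majorise $\sum_n n^{-1/2}(b/n^\rho)^n$ term-by-term after a crude estimate. The factor costs only a polynomial in $b$ near the dominant index $n^\ast\asymp b^{1/\rho}$, which is negligible against $\exp(c\,b^{1/\rho})$, but making this rigorous is a small Laplace-type argument. A cleaner alternative that avoids the lemma altogether is to retain a single well-chosen term of the series: taking $n\approx \rho^{-1}X^{1/\rho}$, Stirling gives $X^n/\Gamma(n\rho+1)\ge \exp(c\,X^{1/\rho})$ directly, which suffices and sidesteps the $n^{-1/2}$ issue entirely.
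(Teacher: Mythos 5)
The paper never proves this proposition: it is imported verbatim as Proposition~2.6 of \cite{foodun-liu-omaba-2014} (the preprint of Foondun--Liu--Omaba), so there is no in-paper argument to compare against. Judged on its own, your proof is correct, and it is in fact the argument that the paper's setup points to: the paper reproduces Lemma~2.3 of the same reference (Lemma~\ref{lemma:exp-lower-bound} here) precisely because the intended proof iterates the renewal inequality to produce the series $c_1\sum_n (\kappa\Gamma(\rho))^n t^{n\rho}/\Gamma(n\rho+1)$ and then extracts exponential growth from it. Your iteration step is sound (monotonicity of the Volterra operator $A$ plus nonnegativity of $f$ lets you discard the remainder $A^{N+1}f$, and the semigroup law for Riemann--Liouville integrals gives $A^n\mathbf 1$ in closed form), and you correctly identify the one real wrinkle: Stirling produces a factor $n^{-1/2}$ that prevents a term-by-term appeal to Lemma~\ref{lemma:exp-lower-bound}. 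Your fallback --- keeping only the single term $n\approx\rho^{-1}X^{1/\rho}$, $X=\kappa\Gamma(\rho)t^\rho$, which Stirling shows is already $\geq \exp\bigl(cX^{1/\rho}\bigr)$ --- is a clean and complete fix, and it even dispenses with the restriction $\rho\in(0,1)$. One point worth making explicit, since the proposition is applied with $\kappa\propto\lambda^{2}$ and the whole purpose is the $\lambda$-dependence of the growth rate: on the small-$t$ range $X<1$ the threshold time is $t_*=(\kappa\Gamma(\rho))^{-1/\rho}$, which depends on $\kappa$, but the correction factor it forces is $\exp\bigl(-c_3(\kappa\Gamma(\rho))^{1/\rho}t_*\bigr)=e^{-c_3}$, so $c_2$ and $c_3$ can indeed be chosen independent of $\kappa$; your phrase ``bounded on compacts'' glosses over this, and it is the one place where the uniformity needs to be checked rather than assumed.
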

\section{Proofs for the white noise case.}

\subsection{Proofs of Theorem \ref{white:upperbound}.}
\begin{proof}

We first show the existence of a unique solution. This follows from a standard Picard iteration; see \cite{walsh}, so we just briefly spell out the main ideas. For more information, see \cite{nane-mijena-2014}. Set
\begin{equation*}
u_t^{(0)}(x):=(\sG u_0)_t(x)
\end{equation*}
and
\begin{equation*}
u_t^{(n+1)}(x):=(\sG u_0)_t(x)+\lambda\int_0^t\int_{\R^d}G_{t-s}(x-y)\sigma(u^{(n)}_s(y))W(\d y\,\d s)\quad\text{for}\quad n\geq 0.
\end{equation*}
Define $D_n(t\,,x):=\E|u^{(n+1)}_t(x)-u^{(n)}_t(x)|^2$ and $H_n(t):=\sup_{x\in \R^d}D_n(t\,,x)$. We will prove the result for $t\in [0,\,T]$, where $T$ is some fixed number. We now use this notation together with Walsh's isometry and the assumption on $\sigma$ to write
\begin{equation*}
\begin{aligned}
D_n(t,\,x)&=\lambda^2\int_0^t\int_{\R^d}G^2_{t-s}(x-y)\E|\sigma(u^{(n)}_s(y))-\sigma(u^{(n-1)}_s(y))|^2\d y\,\d s\\
&\leq \lambda^2L_\sigma^2\int _0^tH_{n-1}(s)\int_{\R^d}G^2_{t-s}(x-y)\,\d y\,\d s\\
&\leq \lambda^2L_\sigma^2\int_0^T\frac{H_{n-1}(s)}{(t-s)^{d\beta/\alpha}}\,\d s
\end{aligned}
\end{equation*}
We therefore have
\begin{equation*}
H_{n}(t)\leq \lambda^2L_\sigma^2\int_0^T\frac{H_{n-1}(s)}{(t-s)^{d\beta/\alpha}}\,\d s.
\end{equation*}
We now note that the integral appearing on the right hand side of the above display is finite when $d<\alpha/\beta$. Hence, by Lemma 3.3 in Walsh \cite{walsh}, the series $\sum_{n=0}^\infty H^{\frac12}_n(t)$ converges uniformly on $[0,\,T].$ Therefore, the sequence $\{u_n\}$ converges  in $L^2$ and uniformly on $[0,\,T]\times\R^d$ and the limit satisfies (\ref{mild-sol-white}).  We can prove uniqueness in a similar way.
We now turn to the proof of the exponential bound. From Walsh's isometry, we have
\begin{equation*}
\E|u_t(x)|^2=|(\sG u_0)_t(x)|^2+\lambda^2\int_0^t\int_{\R^d}G^2_{t-s}(x-y)\E|\sigma(u_s(y))|^2\d y\,\d s.
\end{equation*}
Since we are assuming that the initial condition is bounded, we have that $|(\sG u_0)_t(x)|^2\leq c_1$ and the second term is bounded by
\begin{equation*}
\begin{aligned}
\lambda^2L_\sigma^2\int_0^t\int_{\R^d}&G^2_{t-s}(x-y)\E|u_s(y)|^2\d y\,\d s\\
&\leq c_1\lambda^2L_\sigma^2\int_0^t\frac{1}{(t-s)^{d\beta/\alpha}}\sup_{y\in \R^d}\E|u_s(y)|^2\d y\,\d s.
\end{aligned}
\end{equation*}
We therefore have
\begin{equation*}
\sup_{x\in \R^d}\E|u_s(x)|^2\leq c_1+c_2\lambda^2L_\sigma^2\int_0^t\frac{1}{(t-s)^{d\beta/\alpha}}\sup_{y\in \R^d}\E|u_s(y)|^2\,\d s.
\end{equation*}
The renewal inequality in Proposition \ref{prop:renewal-upper} with $\rho=(\alpha-d\beta)/\alpha$  proves the result.
\end{proof}

\subsection{Proof of Theorem \ref{white:lowerbound}.}
The proof of Theorem \ref{white:lowerbound} will rely on the following observation. From Walsh isometry, we have
\begin{equation*}
\E|u_t(x)|^2=|(\sG u_0)_t(x)|^2+\lambda^2\int_0^t\int_{\R^d}G^2_{t-s}(x-y)\E|\sigma(u_s(y))|^2\d y\,\d s.
\end{equation*}
For any fixed $t_0>0$, we use a change of variable and the fact that all the terms are non-negative to obtain
\begin{equation*}
\E|u_{t+t_0}(x)|^2\geq |(\sG u_0)_{t+t_0}(x)|^2+\lambda^2l_\sigma^2\int_0^t\int_{\R^d}G^2_{t-s}(x-y)\E|u_{s+t_0}(y)|^2\d y\,\d s.
\end{equation*}
Using the above relation again, we obtain
\begin{equation*}
\begin{aligned}
\E&|u_{t+t_0}(x)|^2\geq |(\sG u_0)_{t+t_0}(x)|^2\\
&+\lambda^2l_\sigma^2\int_0^t\int_{\R^d}G^2_{t-s}(x-y)|(\sG u_0)_{s+t_0}(x)|^2\d y\,\d s\\
&+\lambda^4l_\sigma^4\int_0^t\int_{\R^d}\int_0^s\int_{\R^d}G^2_{t-s}(x-y)G^2_{s-s_1}(y-z)\E|u_{s_1+t_0}(z)|^2\d z\,\d s_1\d y\,\d s.
\end{aligned}
\end{equation*}
Using the same procedure recursively, we obtain
\begin{equation}\label{infinite-sum}
\begin{aligned}
\E&|u_{t+t_0}(x)|^2\\
&\geq |(\sG u_0)_{t+t_0}(x)|^2\\
&+\sum_{k=1}^\infty \lambda^{2k}l_\sigma^{2k}\int_0^t\int_{\R^d}\int_0^{s_1}\int_{\R^d}\dots\int_0^{s_{k-1}}\int_{\R^d} |(\sG u_0)_{t_0+s_k}(z_k)|^2\\
&\prod_{i=1}^{k}G^2_{s_{i-1}-s_i}(z_{i-1},z_i)\,\d z_{k+1-i}\,\d s_{k+1-i},
\end{aligned}
\end{equation}
where we have used the convention that $s_0:=t$ and $z_0:=x$.  Let $x\in B(0,\,t^{\beta/\alpha})$ and $0\leq s\leq t$ and set
\begin{equation}\label{lowerbound:g}
(\sG u_0)_{t_0+s}(x)\geq g_t.
\end{equation}
The existence of such a function $g_t$ is guaranteed by Lemma \ref{lemma:lower-bound-frac-diffusion} and Remark \ref{lower-bound-cauchy-problem}. We can now use the above representation to prove the following result.

\begin{proposition}\label{whitenoiselowbd}
Fix $t_0>0$ such that for $t\geq 0$,
\begin{equation*}
\E|u_{t+t_0}(x)|^2\geq g_t^2\sum_{k=0}^\infty \left(\lambda^2l_\sigma^2c_1 \right)^k\left(\frac{t}{k}\right)^{k(\alpha-\beta d)/\alpha}\quad\text{for}\quad x\in B(0, t^{\beta/\alpha}),
\end{equation*}
where $c_1$ is a positive constant.
\end{proposition}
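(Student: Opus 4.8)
The plan is to start from the recursive lower bound \eqref{infinite-sum} and carry out a localisation in the spatial variables, reducing the problem to a product of localised heat-kernel $L^2$-masses over a ball, followed by a Dirichlet integral in time. Write $B:=B(0,t^{\beta/\alpha})$ and $\rho:=(\alpha-\beta d)/\alpha$; since $\beta d<\alpha$ we have $\rho\in(0,1)$. In \eqref{infinite-sum} I would first restrict every spatial integral to $B$, which only decreases the integrand as all terms are nonnegative. Because $x=z_0\in B$ by hypothesis and $s_k\le s_{k-1}\le\cdots\le t$, the bound \eqref{lowerbound:g} applies at the final level, giving $|(\sG u_0)_{t_0+s_k}(z_k)|^2\ge g_t^2$ for $z_k\in B$, and also $|(\sG u_0)_{t+t_0}(x)|^2\ge g_t^2$ for the $k=0$ term. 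Factoring out $g_t^2$ leaves
\begin{equation*}
\E|u_{t+t_0}(x)|^2\ge g_t^2\Bigg[1+\sum_{k=1}^\infty(\lambda^2 l_\sigma^2)^k\int_0^t\!\!\int_B\cdots\int_0^{s_{k-1}}\!\!\int_B\prod_{i=1}^kG^2_{s_{i-1}-s_i}(z_{i-1}-z_i)\,\d z_k\,\d s_k\cdots\d z_1\,\d s_1\Bigg].
\end{equation*}

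The key lemma I would establish is a localised $L^2$-estimate for the kernel: for every $z\in B$ and every $0<\tau\le t$,
\begin{equation*}
\int_B G^2_\tau(z-y)\,\d y\ge c_2\,\tau^{-\beta d/\alpha}.
\end{equation*}
To prove it, note that the lower bound in Lemma \ref{Lemma:G-bounds}(a) gives $G_\tau(w)\ge c_1\tau^{-\beta d/\alpha}$ whenever $|w|\le\tau^{\beta/\alpha}$, since there the minimum in that estimate is attained by the first term. Hence $\int_B G^2_\tau(z-y)\,\d y\ge c_1^2\tau^{-2\beta d/\alpha}\,\big|B(z,\tau^{\beta/\alpha})\cap B\big|$, and since $|z|\le t^{\beta/\alpha}$ and $\tau^{\beta/\alpha}\le t^{\beta/\alpha}$, a simple geometric argument (the ball $B(z,\tau^{\beta/\alpha})$ contains a ball of radius $\tfrac12\tau^{\beta/\alpha}$ lying inside $B$) shows this intersection has measure at least $c_d\tau^{\beta d/\alpha}$, yielding the claim. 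I would then integrate the spatial variables from the innermost $z_k$ outward: at each stage the remaining integrand depends on the current variable only through one kernel factor, so the localised estimate peels off a factor $c_2(s_{i-1}-s_i)^{-\beta d/\alpha}$, the constraints $z_{i-1}\in B$ and $s_{i-1}-s_i\le t$ being met at every step (for the last step $z_0=x\in B$). This leaves
\begin{equation*}
\E|u_{t+t_0}(x)|^2\ge g_t^2\sum_{k=0}^\infty(\lambda^2 l_\sigma^2 c_2)^k\int_0^t\!\int_0^{s_1}\!\cdots\int_0^{s_{k-1}}\prod_{i=1}^k(s_{i-1}-s_i)^{-\beta d/\alpha}\,\d s_k\cdots\d s_1.
\end{equation*}

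The time integral is a Dirichlet integral over the simplex $t>s_1>\cdots>s_k>0$; substituting $u_i=s_{i-1}-s_i$ and using the Beta-function identity gives the value $\Gamma(\rho)^k\,t^{k\rho}/\Gamma(k\rho+1)$. The final step is to pass to the stated form: by the Stirling upper bound $\Gamma(k\rho+1)\le C\sqrt{k}\,(k\rho/e)^{k\rho}$ one gets $\Gamma(\rho)^k t^{k\rho}/\Gamma(k\rho+1)\ge C^{-1}k^{-1/2}\big(\Gamma(\rho)e^\rho\rho^{-\rho}\big)^k(t/k)^{k\rho}$, and absorbing the polynomial factor $k^{-1/2}\ge 2^{-k}$ into the geometric base yields $\ge c_1^k(t/k)^{k\rho}$ for a suitable positive constant $c_1$ and all $k\ge0$ (the $k=0$ term being $g_t^2$ on both sides). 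This produces exactly the claimed series. The main obstacle is the localisation step: one must check that restricting every intermediate spatial variable to the fixed ball $B$ is simultaneously compatible with the lower bound \eqref{lowerbound:g} at the final level and with the localised kernel estimate at every level, which is precisely where the uniformity of the geometric volume bound near the boundary of $B$ is needed; the subsequent Dirichlet and Stirling computations are routine.
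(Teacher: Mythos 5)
Your proof is correct, and although it follows the same skeleton as the paper's --- start from \eqref{infinite-sum}, use \eqref{lowerbound:g} to pull out the factor $g_t^2$, then bound the $k$-fold space-time integral below by $c^k(t/k)^{k(\alpha-\beta d)/\alpha}$ via the kernel bound of Lemma \ref{Lemma:G-bounds}(a) --- both technical steps are executed along a genuinely different route. Spatially, the paper restricts each $z_i$ to the moving intersection $B(z_{i-1},s_i^{\beta/\alpha})\cap B(0,t^{\beta/\alpha})$ and combines the pointwise bound $G_{s_i}(z_{i-1}-z_i)\geq c\,s_i^{-\beta d/\alpha}$ with a volume estimate for that intersection; you instead restrict every $z_i$ to the fixed ball $B$ and prove a localised $L^2$-mass lemma, $\int_B G^2_\tau(z-y)\,\d y\geq c_2\tau^{-\beta d/\alpha}$ for $z\in B$ and $\tau\leq t$, peeling the chain of kernels off from the innermost variable outward (your geometric claim that $B(z,\tau^{\beta/\alpha})\cap B$ contains a ball of radius $\tau^{\beta/\alpha}/2$ is valid, and at each peeling step the needed conditions $z_{i-1}\in B$ and $s_{i-1}-s_i\leq t$ do hold). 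Temporally, the paper shrinks the simplex $\{t>s_1>\dots>s_k>0\}$ to the cube $[0,t/k]^k$ in increment variables so that the one-dimensional integrals decouple trivially, whereas you keep the full simplex, evaluate the Dirichlet integral exactly as $\Gamma(\rho)^k t^{k\rho}/\Gamma(k\rho+1)$ with $\rho=(\alpha-\beta d)/\alpha$, and then recover the stated $(t/k)^{k\rho}$ form by Stirling, absorbing $k^{-1/2}\geq 2^{-k}$ and the leading constant into the geometric base. Your version buys sharper constants (the Mittag-Leffler-type coefficients $\Gamma(\rho)^k/\Gamma(k\rho+1)$, which connect naturally to the renewal machinery of Proposition \ref{prop:renewal-upper} and its converse) and a cleaner, reusable spatial estimate that is the localised analogue of Lemma \ref{Lem:Green1}; the paper's version is more elementary, needing neither Beta-function identities nor Stirling, at the price of lossier domain-reduction bounds. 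Both arguments deliver the proposition as stated, including the $k=0$ term.
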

\begin{proof}
Our starting point is \eqref{infinite-sum}. Recall the notation introduced above,
\begin{equation*}
(\sG u_0)_{t_0+s_k}(z_k)\geq g_t,
\end{equation*}
whenever $z_k\in B(0, t^{\beta/\alpha})$ and $0\leq s_k\leq t$.  The infinite sum of the right of \eqref{infinite-sum} is thus bounded below by
\begin{equation*}
g_t^2\sum_{k=1}^\infty \lambda^{2k}l_\sigma^{2k}\int_0^t\int_{\R^d}\int_0^{s_1}\int_{\R^d}\dots\int_0^{s_{k-1}}\int_{B(0,\,t^{\beta/\alpha})}\prod_{i=1}^{k}G^2_{s_{i-1}-s_i}(z_{i-1},z_i)\,\d z_{k+1-i}\,\d s_{k+1-i}.
\end{equation*}
We now make a reduce the temporal domain of integration and make an appropriate change of variable to find a lower bound of the above display

\begin{equation*}
g_t^2\sum_{k=1}^\infty \lambda^{2k}l_\sigma^{2k}\int_0^{t/k}\int_{\R^d}\int_0^{t/k}\int_{\R^d}\dots\int_0^{t/k}\int_{B(0,\,t^{\beta/\alpha})}\prod_{i=1}^{k}G^2_{s_i}(z_{i-1},z_i)\,\d z_{k+1-i}\,\d s_{k+1-i}.
\end{equation*}
We will reduce the domain of the function
\begin{equation*}
\prod_{i=1}^{k}G^2_{s_i}(z_{i-1},z_i),
\end{equation*}
by choosing the points $z_i$ appropriately so that they are "not too far way".  We choose $z_1\in B(0,\,t^{\beta/\alpha})$ such that $|z_1-x_0|\leq s_1^{\beta/\alpha}$. In general, for $i=1,\cdots,k$, we choose $z_i\in B(z_{i-1},\,s_i^{\beta/\alpha})\cap B(0,\,t^{\beta/\alpha})$.  An immediate  consequence of this restriction is that

\begin{equation*}
\prod_{i=1}^{k}G^2_{s_i}(z_{i-1},z_i)\geq \prod_{i=1}^{k}\frac{c_1}{s_i^{2d\beta/\alpha}}.
\end{equation*}
Since the area of the set $B(z_{i-1},\,s_i^{\beta/\alpha})\cap B(0,\,t^{\beta/\alpha})$ is $c_2s_i^{d\beta/\alpha}$, we have
\begin{equation*}
\begin{aligned}
\int_0^{t/k}\int_{\R^d}\int_0^{t/k}\int_{\R^d}&\dots\int_0^{t/k}\int_{B(0,\,t^{\beta/\alpha})}\prod_{i=1}^{k}G^2_{s_i}(z_{i-1},z_i)\,\d z_{k+1-i}\,\d s_{k+1-i}\\
&\geq \int_0^{t/k}\cdots\int_0^{t/k} \frac{c_3^k}{s_i^{d\beta/\alpha}} \d s_{1}\cdots \d s_{k}\\
&=c^k_3\left(\frac{t}{k}\right)^{(\alpha-d\beta)k/\alpha}.
\end{aligned}
\end{equation*}

Putting all the estimate together we have
\begin{equation*}
\begin{aligned}
\E|&u_{t+t_0}(x)|^2\geq  g_t^2\sum_{k=0}^\infty \lambda^{2k}l_\sigma^{2k}c_4^k\left(\frac{t}{k}\right)^{(\alpha-d\beta)k/\alpha}.
\end{aligned}
\end{equation*}

\end{proof}
{\it Proof of Theorem \ref{white:lowerbound}.}
We make the important observation that $g_t$ decays no faster than polynomial. After a simple substitution and the use of Lemma \ref{lemma:exp-lower-bound}, the theorem is proved.
\qed
\begin{remark}
It should be noted that we do not need the full statement of Proposition \ref{whitenoiselowbd}.  All that we need is the statement when time is large.
\end{remark}

\subsection{Proof of Theorem \ref{thm:limit-lambda-white}.}

\begin{proof}
From the upper bound in Theorem \ref{white:upperbound}, we have that for any $x\in \rd$
$$
\E|u_t(x)|^2\leq c_1e^{c_2\lambda^{\frac{2\alpha}{\alpha-d\beta}}t}\quad \text{for\,all}\quad t>0,
$$
from which we have
$$
\limsup_{\lambda\to\infty} \frac{\log\log \E|u_t(x)|^2}{\log \lambda}\leq \frac{2\alpha}{\alpha-d\beta}.
$$
Next, we will establish a lower bound. Fix $x\in \rd$, for any $t>0$, we can always find a time $t_0$ such that $t=t-t_0+t_0$ and $t-t_0>0$.  If $t$ is already large enough so that $x\in B(0,t^{\beta/\alpha})$ then by Proposition \ref{whitenoiselowbd} and Lemma \ref{lemma:exp-lower-bound} we get
$$
\liminf_{\lambda\to\infty} \frac{\log\log \E|u_t(x)|^2}{\log \lambda}\geq \frac{2\alpha}{\alpha-d\beta}.
$$

Now if $x\notin B(0,t^{\beta/\alpha})$, we can choose a $\kappa>0$ so that $x\in B(0,(\kappa t)^{\beta/\alpha})$.  Then we can use the ideas in Proposition \ref{whitenoiselowbd} to end up with
\begin{equation*}
\E|u_{t+t_0}(x)|^2\geq g^2_{\kappa t}\sum_{k=0}^\infty \left(\lambda^2l_\sigma^2c_1 \right)^k\left(\frac{t}{k}\right)^{k(\alpha-\beta d)/\alpha},
\end{equation*}
and the result follows from this using Lemma \ref{lemma:exp-lower-bound}.
\end{proof}
\section{Proofs for the colored noise case.}
\subsection{ Proof of upper bound in Theorem \ref{thm-colored-noise}.}
\begin{proof}
The proof of existence and uniqueness is standard.  For more information, see\cite{walsh}.  We set
\begin{equation*}
u^{(0)}(t,\,x):=(\sG u_0)_t(x),
\end{equation*}
and
\begin{equation*}
u^{(n+1)}(t,\,x):=(\sG u_0)_t(x)+\lambda\int_0^t\int_{\R^d}G_{t-s}(x-y)\sigma(u^{(n)}(s,\,y))F(\d y\,\d s), \quad n\geq 0.
\end{equation*}
Define $D_n(t\,,x):=\E|u^{(n+1)}(t,\,x)-u^{(n)}(t,\,x)|^2$,  $H_n(t):=\sup_{x\in \R^d}D_n(t\,,x)$ and $\Sigma(t,y,n)=|\sigma(u^{(n)}(t,\,y))-\sigma(u^{(n-1)}(t,\,y))|$. We will prove the result for $t\in [0,\,T]$ where $T$ is some fixed number. We now use this notation together with the covariance formula \eqref{covariance-colored} and the assumption on $\sigma$ to write
\begin{equation*}
\begin{aligned}
&D_n(t,\,x)  \\
&=\lambda^2\int_0^t\int_{\R^d}\int_\rd G_{t-s}(x-y)G_{t-s}(x-z)\E[\Sigma(s,y,n) \Sigma(s,z,n)] f(y,z)\d y d z \d s.
\end{aligned}
\end{equation*}
Now we estimate the expectation on the right hand side using Cauchy-Schwartz inequality.
\begin{equation*}
\begin{aligned}
\E[\Sigma(s,y,n) \Sigma(s,z,n)] &\leq L_\sigma^2\E|u^{(n)}(s,\,y)-u^{(n-1)}(s,\,y)||u^{(n)}(s,\,z)-u^{(n-1)}(s,\,z)|\\
&\leq L_\sigma^2\bigg(\E|u^{(n)}(s,\,y)-u^{(n-1)}(s,\,y)|^2\bigg)^{1/2}\\
&\ \ \ \ \bigg(\E|u^{(n)}(s,\,z)-u^{(n-1)}(s,\,z)| ^2\bigg)^{1/2}\\
&\leq L_\sigma^2 \bigg( D_{n-1}(s,y) D_{n-1}(s,z)\bigg)^{1/2}\\
&\leq L_\sigma^2 H_{n-1}(s).
\end{aligned}
\end{equation*}
Hence we have for $\gamma<\alpha$ using  Lemma  \ref{lemma:covariance-upper-bound}
\begin{equation*}
\begin{aligned}
&D_n(t,\,x)  \\
&\leq \lambda^2L_\sigma^2\int_0^t H_{n-1}(s)\int_{\R^d}\int_\rd G_{t-s}(x-y)G_{t-s}(x-z) f(y,z)\d y \d z \,\d s\\
&\leq c_1 \lambda^2L_\sigma^2\int _0^t \frac{H_{n-1}(s)}{(t-s)^{\gamma \beta/\alpha}}\,\d s.
\end{aligned}
\end{equation*}
We therefore have
\begin{equation*}
H_{n}(t)\leq c_1 \lambda^2L_\sigma^2\int _0^t \frac{H_{n-1}(s)}{(t-s)^{\gamma \beta/\alpha}}\,\d s.
\end{equation*}
We now note that the integral appearing on the right hand side of the above display is finite when $d<\alpha/\beta$. Hence, by Lemma 3.3 in Walsh \cite{walsh}, the series $\sum_{n=0}^\infty H^{\frac12}_n(t)$ converges uniformly on $[0,\,T].$ Therefore, the sequence $\{u_n\}$ converges  in $L^2$ and uniformly on $[0,\,T]\times\R^d$ and the limit satisfies (\ref{mild-sol-colored}).  We can prove uniqueness in a similar way.

We now turn to the proof of the exponential bound. Set
$$
A(t):=\sup_{x\in\rd}\E|u_t(x)|^2.
$$
We claim that
there exists constants $c_4, c_5$ such that  for all $t>0, $ we have
$$
A(t)\leq c_4+ c_5(\lambda L_\sigma)^2\int_0^t \frac{A(s)}{(t-s)^{\beta \gamma/\alpha}}\,\d s.
$$
The renewal inequality in Proposition \ref{prop:renewal-upper} with $\rho=(\alpha-\gamma\beta)/\alpha$  then proves the exponential upper bound. To prove this claim, we start with the mild formulation given by \eqref{mild-sol-colored}, then take the second moment to obtain the following
\begin{equation}
\begin{split}
&\E|u_t(x)|^2=|(\mathcal{G}u)_t(x)|^2\\
&\ \ \ \ +\lambda^2 \int_0^t\int_{\rd\times\rd}G_{t-s}(x,y)G_{t-s}(x,z)f(y,z)\E[\sigma(u_s(y))\sigma(u_s(z))]\d y\d z\d s\\
&\ \ \ \ =I_1+I_2.
\end{split}
\end{equation}

Since $u_0$ is bounded, we have $I_1\leq c_4$. Next we use the assumption on $\sigma$ together with H\"older's inequality to see that
\begin{equation}
\begin{split}
\E[\sigma(u_s(y))\sigma(u_s(z))]&\leq L_\sigma^2 \E[u_s(y)u_s(z)]\\
&\leq [\E|u_s(y)|^2]^{1/2} [\E|u_s(z)|^2]^{1/2}\\
&\leq\sup_{x\in \rd}\E|u_s(x)|^2.
\end{split}
\end{equation}
Therefore, using Lemma \ref{lemma:covariance-upper-bound} the second term $I_2$ is thus bounded as follows.
$$
I_2\leq c_5 (\lambda L_\sigma)^2\int_0^t\frac{A(s)}{(t-s)^{\beta\gamma/\alpha}}\, \d s.
$$
Combining the above estimates, we obtain the required result in the claim.

\end{proof}
\subsection{ Proof of lower bound in Theorem \ref{thm-colored-noise}.}
The starting point of the proof of the lower bound hinges on the following recursive argument.
\begin{equation*}
\begin{aligned}
\E|u_t&(x)|^2\\
&=|(\sG u)_t(x)|^2+\lambda^2 \int_0^{t}\int_{\R^d\times\R^d}\\
& G(t-s_1,\,x,\,z_1)G(t-s_1,\,x,\,z_1')\E[\sigma(u_{s_1}(z_1))\sigma(u_{s_1}(z_1'))f(z_1,z_1')]\d z_1\d z_1'\d s_1.
\end{aligned}
\end{equation*}
We now use the assumption that $\sigma(x)\geq l_\sigma|x|$ for all $x$ to reduce the above to
 \begin{equation*}
\begin{aligned}
\E|u_t&(x)|^2\\
&\geq|(\sG u)_t(x)|^2+\lambda^2 l_\sigma^2\int_0^{t}\int_{\R^d\times\R^d}\\
& G(t-s_1,\,x,\,z_1)G(t-s_1,\,x,\,z_1')\E|u_{s_1}(z_1)u_{s_1}(z_1')|f(z_1,z_1')\d z_1\d z_1'\d s_1.
\end{aligned}
\end{equation*}
We now replace  the $t$ above by $t+\tilde{t}$  and use a substitution to reduce the above to

\begin{equation*}
\begin{aligned}
\E|u_{t+\tilde{t}}&(x)|^2\\
&\geq|(\sG u)_{t+\tilde{t}}(x)|^2+\lambda^2 l_\sigma^2\int_0^{t}\int_{\R^d\times\R^d}\\
& G(t-s_1,\,x,\,z_1)G(t-s_1,\,x,\,z_1')\E|u_{\tilde{t}+s_1}(z_1)u_{\tilde{t}+s_1}(z_1')|f(z_1,z_1')\d z_1\d z_1'\d s_1.
\end{aligned}
\end{equation*}
We also have
\begin{equation*}
\begin{aligned}
&\E|u_{\tilde{t}+s_1}(z_1)u_{\tilde{t}+s_1}(z_1')|\geq|(\sG u)_{\tilde{t}+s_1}(z_1)(\sG u)_{\tilde{t}+s_1}(z_1)|+\lambda^2l_\sigma^2\int_0^{s_1}\int_{\R^d\times\R^d} \\
&G(s_1-s_2,\,z_1,\,z_2)G(s_1-s_2,\,z_1',\,z_2')\E|u_{\tilde{t}+s_2}(z_2)u_{\tilde{t}+s_2}(z_2')|f(z_2, z_2')\d z_2\d z_2'\d s_2.
\end{aligned}
\end{equation*}
The above two inequalities thus give us
\begin{equation}
\begin{aligned}
\E|u_{t+\tilde{t}}&(x)|^2\\
&\geq|(\sG u)_{t+\tilde{t}}(x)|^2+\lambda^2 l_\sigma^2\int_0^{t}\int_{\R^d\times\R^d}\\
& G(t-s_1,\,x,\,z_1)G(t-s_1,\,x,\,z_1')\E|u_{\tilde{t}+s_1}(z_1)u_{\tilde{t}+s_1}(z_1')|f(z_1,z_1')\d z_1\d z_1'\d s_1\\
&\geq |(\sG u)_{\tilde{t}+t}(x)|^2+\lambda^2l_\sigma^2\int_0^{t}\int_{\R^d\times\R^d}\\
&G(t-s_1,\,x,\,z_1)G(t-s_1,\,x,\,z_1')f(z_1,z_1')(\sG u)_{\tilde{t}+s_1}(z_1)(\sG u)_{\tilde{t}+s_1}(z_1')\d z_1\d z_1'\d s_1\\
&+(\lambda l_\sigma)^4\int_0^{t}\int_{\R^d\times\R^d}G(t-s_1,\,x,\,z_1)G(t-s_1,\,x,\,z_1')f(z_1,z_1')\int_0^{\tilde{t}+s_1}\int_{\R^d\times\R^d}\\
&G(s_1-s_2,\,z_1,\,z_2)G(s_1-s_2,\,z_1,'\,z_2')
\E|u_{\tilde{t}+s_2}(z_2)u_{\tilde{t}+s_2}(z_2')|f(z_2, z_2')\d z_2\d z_2'\d s_2\d z_1\d z_1'\d s_1.
\end{aligned}
\end{equation}

We set $z_0=z_0':=x$ and $s_0:=t$ and continue the recursion as above to obtain
\begin{equation}\label{recursion}
\begin{aligned}
\E|u_{\tilde{t}+t}&(x)|^2\\
&\geq|(\sG u)_{\tilde{t}+t}(x)|^2\\
&+ \sum_{k=1}^\infty (\lambda l_\sigma)^{2k}\int_0^t\int_{\R^d\times\R^d}\int_0^{s_1}\int_{\R^d\times\R^d}\cdots \int_0^{s_{k-1}}\int_{\R^d\times\R^d} |(\sG u)_{\tilde{t}+s_k}(z_k)(\sG u)_{\tilde{t}+s_k}(z_k')|\\
&\prod_{i=1}^kG(s_{i-1}-s_{i}, z_{i-1},\,z_i)G(s_{i-1}-s_{i}, z'_{i-1},\,z'_i)f(z_i, z_i') \d z_i\d z_i'\d s_i.
\end{aligned}
\end{equation}
Therefore,
\begin{equation*}
\begin{aligned}
\E|u_{\tilde{t}+t}&(x)|^2\\
&\geq|(\sG u)_{\tilde{t}+t}(x)|^2\\
&+ \sum_{k=1}^\infty (\lambda l_\sigma)^{2k}\int_0^t\int_{\R^d\times\R^d}\int_0^{s_1}\int_{\R^d\times\R^d}\cdots \int_0^{s_{k-1}}\int_{\R^d\times\R^d} |(\sG u)_{\tilde{t}+s_k}(z_k)(\sG u)_{\tilde{t}+s_k}(z_k')|\\
&\prod_{i=1}^kG(s_{i-1}-s_{i}, z_{i-1},\,z_i)G(s_{i-1}-s_{i}, z'_{i-1},\,z'_i)f(z_i, z_i') \d z_i\d z_i'\d s_i.
\end{aligned}
\end{equation*}

\begin{proposition}\label{colorednoiselowbd}
There exists a $t_0>0$ such that for  $t>t_0$,
\begin{equation*}
\E|u(t+t_0,\,x)|^2\geq g_t^2\sum_{k=0}^\infty \left(\lambda^2l_\sigma^2c_1 \right)^k\left(\frac{t}{k}\right)^{k(\alpha-\gamma \beta )/\alpha}\quad\text{whenever}\ x \in B(0, t^{\beta/\alpha}),
\end{equation*}
where $c_1$ is a positive constant.
\end{proposition}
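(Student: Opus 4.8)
The plan is to run the proof of Proposition~\ref{whitenoiselowbd} again, with $\tilde t=t_0$, but adapted to the two parallel spatial chains $\{z_i\}$ and $\{z_i'\}$ that the colored covariance $f(z_i,z_i')=|z_i-z_i'|^{-\gamma}$ forces upon us. Starting from the recursion \eqref{recursion}, I would first bound the non-random factor from below: by Lemma~\ref{lemma:lower-bound-frac-diffusion} together with Remark~\ref{lower-bound-cauchy-problem} there is a quantity $g_t$ (decaying no faster than a polynomial in $t$) such that
\begin{equation*}
(\sG u)_{t_0+s_k}(z_k)\,(\sG u)_{t_0+s_k}(z_k')\geq g_t^2
\end{equation*}
whenever $z_k,z_k'\in B(0,t^{\beta/\alpha})$ and $s_k\leq t$. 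Pulling $g_t^2$ out of every summand of \eqref{recursion}, the task reduces to bounding from below, for each $k$, the nested space–time integral of $\prod_{i=1}^{k}G_{s_{i-1}-s_i}(z_{i-1}-z_i)\,G_{s_{i-1}-s_i}(z_{i-1}'-z_i')\,f(z_i,z_i')$, taken over the time simplex and over $(\R^d)^{2k}$ in space, with the conventions $z_0=z_0'=x$ and $s_0=t$.

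The difficulty, absent in the white-noise case, is that in the colored recursion the two chains are coupled only through $f(z_i,z_i')=|z_i-z_i'|^{-\gamma}$, and a clean per-level estimate needs $f(z_i,z_i')\gtrsim s_i^{-\gamma\beta/\alpha}$, i.e.\ the two chains must stay within distance $\sim s_i^{\beta/\alpha}$ of each other at \emph{every} level. Since the near-diagonal lower bound of Lemma~\ref{Lemma:G-bounds}(a) only pins each new point within $s_i^{\beta/\alpha}$ of its predecessor, the inter-chain separation inherited from the previous level is controlled by the \emph{previous} scale $s_{i-1}^{\beta/\alpha}$, which is larger; a short computation shows the chains would then be allowed to drift apart and the required lower bound on $f$ fails. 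This is the main obstacle, and I would resolve it by making all spatial scales comparable: after passing to relative times $\tau_i:=s_{i-1}-s_i$, restrict each $\tau_i$ to $[\tfrac{t}{2k},\tfrac{t}{k}]$ (a legitimate reduction of the integration domain, since $\sum_i\tau_i\le t$ and the integrand is nonnegative). Writing $\rho_k:=(t/2k)^{\beta/\alpha}$, every scale $s_i^{\beta/\alpha}$ then lies between $\rho_k$ and $2^{\beta/\alpha}\rho_k$.

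With comparable scales in hand, I would build the spatial tube inductively by the midpoint construction: given $z_{i-1},z_{i-1}'$ with $|z_{i-1}-z_{i-1}'|\le\rho_k$, let $m_{i-1}$ be their midpoint and restrict both $z_i$ and $z_i'$ to $B(m_{i-1},\tfrac12\rho_k)\cap B(0,t^{\beta/\alpha})$. This immediately yields $|z_i-z_i'|\le\rho_k$ (so the invariant propagates), $|z_i-z_{i-1}|\le\rho_k\le s_i^{\beta/\alpha}$ and $|z_i'-z_{i-1}'|\le\rho_k\le s_i^{\beta/\alpha}$ (so both kernels obey $G_{s_i}\ge c\,s_i^{-\beta d/\alpha}$ by Lemma~\ref{Lemma:G-bounds}(a)), and $f(z_i,z_i')=|z_i-z_i'|^{-\gamma}\ge\rho_k^{-\gamma}$. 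Since $m_{i-1}\in B(0,t^{\beta/\alpha})$ by convexity and $\rho_k\le t^{\beta/\alpha}$, the intersection has measure $\gtrsim\rho_k^{d}$. Thus each level contributes at least
\begin{equation*}
\int_{t/2k}^{t/k}\frac{c}{s_i^{2d\beta/\alpha}}\,\rho_k^{-\gamma}\,\bigl(\rho_k^{d}\bigr)^2\,\d s_i
\;\geq\; c\,\rho_k^{-\gamma}\cdot\frac{t}{2k}
\;=\;c\Bigl(\frac{t}{2k}\Bigr)^{(\alpha-\gamma\beta)/\alpha},
\end{equation*}
where I used $s_i^{2d\beta/\alpha}\sim\rho_k^{2d}$ on the chosen interval. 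Taking the product over the $k$ levels and absorbing the harmless factor $2^{-k(\alpha-\gamma\beta)/\alpha}$ into the constant gives the lower bound $c_1^k (t/k)^{k(\alpha-\gamma\beta)/\alpha}$ for the $k$-th integral, and reinstating $g_t^2$ and $(\lambda l_\sigma)^{2k}$ (with the $k=0$ term coming from $|(\sG u)_{t+t_0}(x)|^2\ge g_t^2$) produces exactly the asserted series. I expect the only nontrivial points in the write-up to be the verification that the domain reduction to $[\tfrac{t}{2k},\tfrac{t}{k}]$ is a genuine subset of the simplex and the bookkeeping of the intersection measure near $\partial B(0,t^{\beta/\alpha})$; both are routine once the comparable-times idea is in place. (The conversion of this series into the stated exponential growth is then immediate from Lemma~\ref{lemma:exp-lower-bound}.)
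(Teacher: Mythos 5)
Your proposal is correct, and while it follows the paper's overall skeleton (the recursion \eqref{recursion}, extraction of $g_t^2$ via Lemma \ref{lemma:lower-bound-frac-diffusion} and Remark \ref{lower-bound-cauchy-problem}, localization of the nested space--time integrals, then Lemma \ref{lemma:exp-lower-bound}), your localization is genuinely different from the paper's. The paper handles the coupling through $f$ by anchoring \emph{both} chains, at \emph{all} levels, inside the single ball $B(x,\,s_1^{\beta/\alpha}/2)$ determined by the first time increment, while forcing the later increments to satisfy $s_i^{\beta/\alpha}\leq s_1^{\beta/\alpha}/2$; this yields only the weaker per-level bound $f(z_i,z_i')\geq s_1^{-\gamma\beta/\alpha}$, and the correct power $(t/k)^{k(1-\gamma\beta/\alpha)}$ is recovered at the very end from the one-dimensional integral $\int_0^{t/k}s_1^{k-1-k\gamma\beta/\alpha}\,\d s_1$, whose factor $s_1^{k-1}$ is produced by the $k-1$ inner time integrals. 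You instead make all increments comparable, $\tau_i\in[t/2k,\,t/k]$, and propagate the inter-chain invariant $|z_i-z_i'|\leq\rho_k$ by the midpoint construction, so that every level satisfies the uniform bound $f(z_i,z_i')\geq\rho_k^{-\gamma}$ and the per-level contributions $c\,(t/2k)^{(\alpha-\gamma\beta)/\alpha}$ simply multiply, with no final integration needed. Both reductions are legitimate subsets of the time simplex and both invoke only Lemma \ref{Lemma:G-bounds}(a) for the kernels, so the two arguments are of comparable strength: yours trades the paper's hierarchical time constraint (inner times much smaller than $s_1$) for symmetric, uniform per-level estimates, which arguably makes the bookkeeping cleaner and the source of the exponent more transparent, while the paper's anchor-ball device avoids having to track midpoints and verify the convexity and boundary-measure points you flag at the end. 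Your diagnosis of the obstacle --- that chaining each point only to its predecessor lets the two chains drift apart so that the lower bound on $f$ fails --- is exactly the issue the paper's anchor ball is designed to circumvent; the paper just circumvents it with a different geometry than yours.
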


\begin{proof}
We will look at the following term which comes from the recursive relation described above,
\begin{equation*}
\begin{aligned}
\sum_{k=1}^\infty& (\lambda l_\sigma)^{2k}\int_0^t\int_{\R^d\times\R^d}\int_0^{s_1}\int_{\R^d\times\R^d}\cdots \int_0^{s_{k-1}}\int_{\R^d\times\R^d} |(\sG u)_{\tilde{t}+s_k}(z_k)(\sG u)_{\tilde{t}+s_k}(z_k')|\\
&\prod_{i=1}^kG(s_{i-1}-s_{i}, z_{i-1},\,z_i)G(s_{i-1}-s_{i}, z'_{i-1},\,z'_i)f(z_i, z_i') \d z_i\d z_i'\d s_i.
\end{aligned}
\end{equation*}
We can bound the above term by
\begin{equation*}
\begin{aligned}
g_t^2\sum_{k=1}^\infty& (\lambda l_\sigma)^{2k}\int_0^t\int_{\R^d\times\R^d}\int_0^{s_1}\int_{\R^d\times\R^d}\cdots \int_0^{s_{k-1}}\int_{B(0,\,t^{\beta/\alpha})\times B(0,\,t^{\beta/\alpha})} \\
&\prod_{i=1}^kG(s_{i-1}-s_{i}, z_{i-1},\,z_i)G(s_{i-1}-s_{i}, z'_{i-1},\,z'_i)f(z_i, z_i') \d z_i\d z_i'\d s_i.
\end{aligned}
\end{equation*}
We now make a substitution and reduce the temporal region of integration to write
\begin{equation*}
\begin{aligned}
g_t^2\sum_{k=1}^\infty& (\lambda l_\sigma)^{2k}\int_0^{t/k}\int_{\R^d\times\R^d}\int_0^{t/k}\int_{\R^d\times\R^d}\cdots \int_0^{t/k}\int_{B(0,\,t^{\beta/\alpha})\times B(0,\,t^{\beta/\alpha})} \\
&\prod_{i=1}^kG(s_{i}, z_{i-1},\,z_i)G(s_{i}, z'_{i-1},\,z'_i)f(z_i, z_i') \d z_i\d z_i'\d s_i.
\end{aligned}
\end{equation*}

We will further reduce the domain of integration so the function
\begin{equation*}
\prod_{i=1}^kG(s_{i}, z_{i-1},\,z_i)G(s_{i}, z'_{i-1},\,z'_i)f(z_i, z_i'),
\end{equation*}
has the required lower bound.  For $i=0,\cdots, k$, we set

\begin{equation*}
z_i\in B(x,\,s_1^{\beta/\alpha}/2)\cap B(z_{i-1},\,s_i^{\beta/\alpha})
\end{equation*}
and
\begin{equation*}
z'_i\in B(x,\,s_1^{\beta/\alpha}/2)\cap B(z'_{i-1},\,s_i^{\beta/\alpha}).
\end{equation*}
We therefore have $|z_i-z'_i|\leq s_1^{\beta/\alpha}$, $|z_i-z_{i-1}|\leq s_i^{\beta/\alpha}$ and $|z'_i-z'_{i-1}|\leq s_i^{\beta/\alpha}$. We use the lower bound on the heat kernel to find that
\begin{equation*}
\begin{aligned}
\prod_{i=1}^k&G(s_{i}, z_{i-1},\,z_i)G(s_{i}, z'_{i-1},\,z'_i)f(z_i, z_i')\\
&\geq \frac{c^k}{s_1^{k\gamma\beta/\alpha}}\prod_{i=1}^k\frac{1}{s_i^{2\beta d/\alpha}},
\end{aligned}
\end{equation*}
for some $c>0$.
We set $\sA_i:=B(x,\,s_1^{\beta/\alpha}/2)\cap B(z_{i-1},\,s_i^{\beta/\alpha})$ and $\sA_i':=B(x,\,s_1^{\beta/\alpha}/2)\cap B(z'_{i-1},\,s_i^{\beta/\alpha})$.
We will further choose that $s_i^{\beta/\alpha}\leq \frac{s_1^{\beta/\alpha}}{2}$ and note that $|\sA_i|\geq c_1s_i^{d\beta/\alpha}$ and $|\sA'_i|\geq c_1s_i^{d\beta/\alpha}$. We therefore have

\begin{equation*}
\begin{aligned}
g_t^2\sum_{k=1}^\infty& (\lambda l_\sigma)^{2k}\int_0^{t/k}\int_{\R^d\times\R^d}\int_0^{t/k}\int_{\R^d\times\R^d}\cdots \int_0^{t/k}\int_{B(0,\,t^{\beta/\alpha})\times B(0,\,t^{\beta/\alpha})} \\
 &\prod_{i=1}^kG(s_{i}, z_{i-1},\,z_i)G(s_{i}, z'_{i-1},\,z'_i)f(z_i, z_i') \d z_i\d z_i'\d s_i\\
 &\geq g_t^2\sum_{k=1}^\infty (\lambda l_\sigma)^{2k}\int_0^{t/k}\int_{\sA_1\times\sA'_1}\int_0^{s_1/2^{\beta/\alpha}}\int_{\sA_2\times\sA'_2}\cdots \int_0^{s_1/2^{\beta/\alpha}}\int_{\sA_k\times\sA'_k}\\
&\frac{1}{s_1^{k\gamma\beta/\alpha}}\prod_{i=1}^k\frac{1}{s_i^{2\beta d/\alpha}}\d z_i\d z_i'\d s_i\\
&\geq g_t^2\sum_{k=1}^\infty(\lambda l_\sigma c_2)^{2k} \int_0^{t/k}\frac{1}{s_1^{k\gamma \beta/\alpha}}s_1^{k-1}\d s_1\\
&\geq g_t^2\sum_{k=1}^\infty(\lambda l_\sigma c_3)^{2k} \left(\frac{t}{k}\right)^{k(1-\gamma\beta/\alpha)}.
\end{aligned}
\end{equation*}

We now take time large enough and use Lemma \ref{lemma:exp-lower-bound} to complete the proof of theorem.
\end{proof}
\subsection{Proof of Theorem \ref{excitation-colored}.}
The proof of this theorem is exactly as that of Theorem \ref{thm:limit-lambda-white} and it is omitted.
\qed
\subsection{Proof of Theorem \ref{thm:holder-exponent}.}
\begin{proof}
We will make use of the Kolmogorov's continuity theorem.
Therefore we consider the increment $\E|u_{t+h}(x)-u_{t}(x)|^p$ for $h\in (0,1)$ and $p\geq 2$. We have
\begin{equation}
\begin{split}
u_{t+h}(x)-u_{t}(x)&=\int_\rd[G_{t+h}(x-y)-G_{t}(x-y)]u_0(y)\d y\\
&+\lambda \int_0^t\int_\rd  [G_{t+h-s}(x-y)- G_{t-s}(x-y)]\sigma(u_s(y))F(\d s\, \d y)\\
&+\lambda \int_t^{t+h}\int_\rd  G_{t+h-s}(x-y) \sigma(u_s(y))F(\d s\, \d y).
\end{split}
\end{equation}
The first term, $\int_\rd G_{t}(x-y)u_0(y)\d y$ is  smooth for $t>0$. This essentially follows from the fact that under the condition on the initial condition, we can interchange integral and derivatives.  We will therefore look at higher moments of the remaining terms. Recall that  we can use the similar ideas in the proof of Theorem \ref{thm-colored-noise} to show that $\sup_{x\in \rd}\E|u_t(x)|^p$ is finite for all $t>0$. We use the Burkholder's inequality together with Proposition \ref{prop:temporal-increment-bound} to write

\begin{equation}
\begin{split}
&\E|\int_0^t\int_\rd  [G_{t+h-s}(x-y) G_{t-s}(x-y)]\sigma(u_s(y))F(\d s\d y)|^p \\
&\leq  c_1|\int_0^t\int_{\rd}|G^\ast_{t-s+h}(\xi)-G^\ast_{t-s}(\xi)|^2\frac{1}{|\xi|^{d-\gamma}}\d \xi \d s |^{p/2}\\
&\leq c_1 h^{\frac{p(1-\beta\gamma/\a)}{2}}.
\end{split}
\end{equation}

Similarly we have

\begin{equation}
\begin{split}
&\E|\int_t^{t+h}\int_\rd  G_{t+h-s}(x-y) \sigma(u_s(y))F(\d s\d y)|^p \\
&\leq c_2|\int_t^{t+h}\int_\rd \int_\rd G_{t+h-s}(x-y) G_{t+h-s}(x-z)f(y,z)\d s\d y\d z|^{p/2}\\
& \leq c_2|\int_t^{t+h}\int_\rd  [E_\beta(-\nu|\xi|^\alpha( t+h-s)^\beta)]^2 \frac{1}{|\xi|^{d-\gamma}}\d \xi \d s|^{p/2}\\
&\leq c_1 (\frac{C_1^*h}{1-\beta\gamma/\a})^{\frac{p(1-\beta\gamma/\a)}{2}}.
\end{split}
\end{equation}
Combine the above estimates, we see that
$$
\E|u_{t+h}(x)-u_{t}(x)|^p\leq C h^{\frac{p(1-\beta\gamma/\a)}{2}}.
$$
Now an application of Kolmogorov's continuity theorem as in \cite{boulanba-et-al-2010} completes the proof.
\end{proof}

\end{document}